\documentclass[11pt]{amsart}
\usepackage{amssymb}
\usepackage{ bbold }
\usepackage{xypic}
\xyoption{all}
\usepackage{url}
\usepackage{mathrsfs}
\usepackage{tikz-cd}

\usepackage[linkcolor=blue, colorlinks=true]{hyperref}
\usepackage[mathscr]{euscript}

%------------------------------------------------------------------------------------------------%
%------------------------------------------------------------------------------------------------%
%------------------------------------  PERSONAL INFORMATION ------------------------------------ %
%------------------------------------------------------------------------------------------------%
%------------------------------------------------------------------------------------------------%

%--- TITLE

\title[On The Picard group of the stable module category]{On the Picard group of the stable module category for infinite groups}

%--- AUTHORS

\author[Juan Omar G\'omez]{Juan Omar G\'omez}
\thanks{}
\address{
Fakultät für Mathematik, Universität Bielefeld, D-33501 Bielefeld, Germany}
\email{jgomez@math.uni-bielefeld.de}

%--- NEW COMMANDS

\newcommand{\comments}[1]{}

\newcommand{\Coker}{\operatorname{Coker}\nolimits}

\newcommand{\Ho}{\operatorname{Ho}\nolimits}
\newcommand{\Hom}{\operatorname{Hom}\nolimits}

\newcommand{\Ker}{\operatorname{Ker}\nolimits}

\newcommand{\Mod}{\operatorname{Mod}\nolimits}
\newcommand{\Ext}{\operatorname{Ext}\nolimits}

\newcommand{\Res}{\operatorname{Res}\nolimits}

\newcommand{\StMod}{\operatorname{StMod}\nolimits}
\newcommand{\CAlg}{\operatorname{CAlg}\nolimits}

\newcommand{\ind}{\mathord\uparrow}
\newcommand{\restr}{\mathord\downarrow}

\def \C{{\mathcal C}}
\def \D{{\mathcal D}}

\def \gH{ { ^g H} }

\makeatletter
\newcommand*{\doublerightarrow}[2]{\mathrel{
  \settowidth{\@tempdima}{$\scriptstyle#1$}
  \settowidth{\@tempdimb}{$\scriptstyle#2$}
  \ifdim\@tempdimb>\@tempdima \@tempdima=\@tempdimb\fi
  \mathop{\vcenter{
    \offinterlineskip\ialign{\hbox to\dimexpr\@tempdima+1em{##}\cr
    \rightarrowfill\cr\noalign{\kern.5ex}
    \rightarrowfill\cr}}}\limits^{\!#1}_{\!#2}}}
\newcommand*{\triplerightarrow}[1]{\mathrel{
  \settowidth{\@tempdima}{$\scriptstyle#1$}
  \mathop{\vcenter{
    \offinterlineskip\ialign{\hbox to\dimexpr\@tempdima+1em{##}\cr
    \rightarrowfill\cr\noalign{\kern.5ex}
    \rightarrowfill\cr\noalign{\kern.5ex}
    \rightarrowfill\cr}}}\limits^{\!#1}}}
\makeatother

%Declarations
\theoremstyle{plain}
\newtheorem*{introtheorem}{Theorem}

\newtheorem{theorem}{Theorem}[section]
\newtheorem{proposition}[theorem]{Proposition}
\newtheorem{corollary}[theorem]{Corollary}
\newtheorem{lemma}[theorem]{Lemma}

\theoremstyle{definition}
\newtheorem{definition}[theorem]{Definition}
\newtheorem{remark}[theorem]{Remark}
\newtheorem{example}[theorem]{Example}

\keywords{Picard group, endotrivial modules, stable module category. }
\subjclass[2020]{Primary: 20C07 ; Secondary: 18G65, 18N60.}%16E05.
\date{\today}

\begin{document}

\begin{abstract}
We introduce the stable module $\infty$-category for groups of type $\Phi$ as an enhancement of the stable category defined by N. Mazza and P. Symonds. For groups of type $\Phi$ which act on a tree, we show that the stable module $\infty$-category decomposes in terms of the associated graph of groups. For groups which admit a finite-dimensional cocompact model for the classifying space for proper actions, we exhibit a decomposition in terms of the stable module $\infty$-categories of their finite subgroups. We use these decompositions to provide methods to compute the Picard group of the stable module category. In particular, we provide a description of the Picard group for countable locally finite $p$-groups. 
\end{abstract}

\maketitle

\section*{Introduction}
%%-------------------------------
The stable module category has been proven widely useful to deal with classification problems in modular representation theory of finite groups. It has been studied from many different viewpoints, including homotopy theory that had a successful application in classification of endotrivial modules for finite groups \cite{Gro22}.  

For infinite groups much less is known. Mazza and Symonds construct the stable module category for groups of type $\Phi$, as the largest quotient of the category of modules on which the syzygy functor is invertible \cite{MS}. This category is equipped with a triangulated structure compatible with the tensor product over the ground ring. In other words, it is a tensor triangulated category in the language of Balmer \cite{Bal10}. An important invariant of this category is its Picard group, which for finite groups agrees with the group of endotrivial modules. However, for infinite groups we have just a few available tools to compute the Picard group of the stable module category. 

%%%%%%%%%%%%%%%%%%%%%%%%%%%%%%%%%%%%

We introduce a homotopy-theoretic interpretation of the stable module category for groups of type $\Phi$ as a symmetric monoidal stable $\infty$-category. For finite groups, this interpretation agrees with the one given in \cite{Mat15}. In particular, for groups of type $\Phi$ acting on a tree, we exhibit a decomposition in terms of the fundamental domain of the action and its isotropy groups. No restriction on the size of the isotropy groups is needed for the following result (see Theorem \ref{decomposition of C}).

\begin{introtheorem}
    Let $G$ be a group of type $\Phi$ acting on a tree. Consider the associated graph of groups 
    $\Gamma(G)\colon\Gamma \to \mathrm{Gps}$, that is, $G$ is the fundamental group of $\Gamma(G)$. Then we have an equivalence of symmetric monoidal $\infty$-categories 
$$\StMod(kG)\xrightarrow[]{\simeq} \varprojlim_{\sigma\in \Gamma^\textrm{op}} \StMod(kG_\sigma). $$ 
    
\end{introtheorem}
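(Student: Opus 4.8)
The plan is to construct the comparison functor by restriction to isotropy groups and to prove it is an equivalence by exploiting the single piece of geometry available: a tree is contractible, so the trivial module $k$ is resolved by the permutation modules $k[G/G_\sigma]$ attached to the cells. Write $\Gamma = G\backslash T$ for the quotient graph, so that $\Gamma$ has as objects the cells $\sigma$ (vertices $v$ and edges $e$) and as non-identity morphisms the face relations $e \rightsquigarrow v$ of an edge into its endpoints; the graph-of-groups data $\Gamma(G)$ sends $\sigma$ to its isotropy group $G_\sigma$ and a face relation to an inclusion $G_e \hookrightarrow G_v$ (well defined up to conjugacy), each of which is again of type $\Phi$ so that $\StMod(kG_\sigma)$ is defined. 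Each inclusion $G_\sigma \hookrightarrow G$ induces a symmetric monoidal restriction functor $\res^G_{G_\sigma}\colon \StMod(kG)\to \StMod(kG_\sigma)$, and these are compatible with the face inclusions, so by the universal property of the limit they assemble into a symmetric monoidal functor $F\colon \StMod(kG)\to \varprojlim_{\sigma\in\Gamma^{\mathrm{op}}}\StMod(kG_\sigma)$. Since $\Gamma$ is $1$-dimensional, the target is the homotopy equalizer of the two restriction functors $\prod_v \StMod(kG_v)\rightrightarrows \prod_e \StMod(kG_e)$ induced by the source and target maps of the edges, which keeps all subsequent computations finite.

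Next I would feed in the geometry. Because $T$ is a tree its augmented cellular chain complex is exact, giving a short exact sequence of $kG$-modules
$$0\to \bigoplus_{e}k[G/G_e]\to \bigoplus_{v}k[G/G_v]\to k\to 0,$$
where the sums run over orbit representatives of cells. By Shapiro's lemma each induced module $k[G/G_\sigma]=\Ind_{G_\sigma}^{G}k$ is the image of the monoidal unit under the left adjoint of $\res^G_{G_\sigma}$, so this resolution is the algebraic shadow of the assertion that the constant cosheaf with value $k$ on $T$ computes $k$. I would use it to identify $\varprojlim_\sigma \StMod(kG_\sigma)$ with the $\infty$-category of $G$-equivariant cosheaves of stable modules on $T$, under which $F$ becomes pullback along the contractible map $T\to \mathrm{pt}$.

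To prove $F$ is an equivalence I would verify full faithfulness and essential surjectivity. For full faithfulness, combine the induction–restriction adjunctions with the projection formula $\Ind_{G_\sigma}^{G}k\otimes M\simeq \Ind_{G_\sigma}^{G}\res^G_{G_\sigma}M$ to obtain, for all $M,N$,
$$\Map_{\StMod(kG)}(\Ind_{G_\sigma}^{G}k\otimes M,N)\simeq \Map_{\StMod(kG_\sigma)}(\res^G_{G_\sigma}M,\res^G_{G_\sigma}N).$$
Since the resolution of $k$ above is built from the $\Ind_{G_\sigma}^{G}k$, tensoring with $M$ and mapping into $N$ turns $\Map_{\StMod(kG)}(M,N)$ into the corresponding limit over the cells; exactness of the tree resolution (that is, contractibility of $T$) identifies this with $\varprojlim_\sigma \Map_{\StMod(kG_\sigma)}(\res^G_{G_\sigma}M,\res^G_{G_\sigma}N)$, which is exactly full faithfulness of $F$. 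For essential surjectivity I would glue a compatible family $(M_\sigma)$ by forming the homotopy colimit of the associated cellular diagram over $T$, using the same exact sequence to check that its restrictions recover the $M_\sigma$ coherently. Monoidality is then automatic, since each $\res^G_{G_\sigma}$ is symmetric monoidal and limits of symmetric monoidal $\infty$-categories are computed on underlying $\infty$-categories.

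The main obstacle is that the isotropy groups $G_\sigma$ need not be finite, so the usual shortcut — proving that $k\to\prod_\sigma k[G/G_\sigma]$ is descendable in the sense of Mathew, which rests on dualizability of the induced modules — is unavailable. I would therefore avoid descendability altogether and lean on the $1$-dimensionality of $\Gamma$: the limit collapses to a single equalizer, and the two-term resolution coming from contractibility of the tree is precisely enough to run the Mayer–Vietoris argument for mapping spaces by hand. Controlling the coherence of the gluing in essential surjectivity, and verifying that the complete cohomology underlying the mapping spaces in $\StMod(kG)$ genuinely satisfies the Mayer–Vietoris sequence attached to the graph of groups — a Bass–Serre statement at the level of the stable category rather than of ordinary cohomology — is where the real work lies.
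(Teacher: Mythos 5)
Your proposal is correct and proves the statement by the same overall skeleton as the paper (build the comparison functor $F$ from the restrictions, then check full faithfulness and essential surjectivity), but both halves are implemented by genuinely different means. For full faithfulness, the paper's proof of Theorem \ref{graphs of groups} compares two spectral sequences: the Bousfield--Kan spectral sequence $E_2^{p,q}=H^p(\Gamma;\pi_q D)$ for the homotopy limit of mapping spaces, and the spectral sequence from \cite{Bro} computing $\widehat{\Ext}^{*}_{kG}(M,N)$ from the $G$-action on $T$; both are concentrated in two columns, whence the isomorphism. Your route --- tensoring the two-term resolution $0\to\bigoplus_e k[G/G_e]\to\bigoplus_v k[G/G_v]\to k\to 0$ with $M$, mapping into $N$, and using the projection formula plus the induction--restriction adjunction to identify the resulting fiber sequence of mapping spectra with the equalizer that computes mapping spaces in $\varprojlim_\sigma\StMod(kG_\sigma)$ --- is the same Mayer--Vietoris mathematics that underlies Brown's spectral sequence, but it is derived internally to the stable $\infty$-category; this is arguably cleaner, avoids spectral sequences and their convergence caveats entirely, and removes the need to know a priori that complete cohomology satisfies the Bass--Serre Mayer--Vietoris sequence (you construct it rather than cite it). For essential surjectivity the paper simply cites \cite[Lemma 7.1]{MS}, which is precisely the gluing you propose to carry out by hand; so what the paper's packaging buys is brevity, while yours buys self-containment. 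Two remarks on your flagged ``real work'': first, the coherence issue is milder than you fear, because the category $\Gamma$ has no composable pairs of non-identity morphisms, so an object of the limit is literally a family $(M_\sigma)$ together with equivalences $\res^{G_v}_{G_e}M_v\simeq M_e$ over the edges and no higher data; second, your cofiber gluing does succeed: restriction preserves colimits (it has both adjoints), the Mackey decomposition turns the restricted cellular diagram into the $G_v$-equivariant chains of $T$ with the given coefficients, and a radial filtration of $T$ about the $G_v$-fixed vertex --- using that, by uniqueness of geodesics in a tree, the $G_v$-stabilizer of a vertex at distance $n$ equals that of its edge pointing toward the center, so each new vertex-edge pair contributes a cancelling equivalence --- shows the cofiber restricts to $M_v$. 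That filtration argument is essentially the content of the lemma the paper cites.
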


This decomposition leads to a spectral sequence which computes the Picard group of the stable module category for groups of type $\Phi$ acting on trees. In particular, we provide a more conceptual proof of Theorem 7.1 in \cite{MS}. That is, we obtain a short exact sequence of abelian groups (see Corollary \ref{exact sequence for picard groups})
$$0\to H^1(\Gamma; \pi_1\circ f)\to T(G)\to H^0(\Gamma; \pi_0\circ f)\to 0$$ where $T(G)$ denotes the Picard group of the stable module category of $G$, and $f$ is the composition of \textit{the Picard space} functor and the stable module $\infty$-category functor.

For groups admitting a finite-dimensional model for the classifying space for proper actions, we follow the ideas in \cite{Mat16} to exhibit a different decomposition of the stable module $\infty$-category, in this case, in terms of the finite subgroups. The following theorem summarizes the results in Subsection  \ref{subsection 4.2}.

\begin{introtheorem}
     Let $G$ be a group admitting a finite-dimensional cocompact model $X$ for $\underline{E}G$. Let $\mathscr{F}$ be a family of finite subgroups of $G$ which contains the family of finite $p$-subgroups of $G$ and $\textrm{Stab}$ be the family of finite subgroups of $G$ of the form $G_\sigma$, for some simplex $\sigma$ of  $X$. Then we have equivalences of symmetric monoidal stable $\infty$-categories $$\StMod(kG)\xrightarrow[]{\simeq} \varprojlim_{G/H\in\mathcal{O}_\mathscr{F}(G)^\textrm{op}} \StMod(kH) $$  
     and $$\StMod(kG)\xrightarrow[]{\simeq} \varprojlim_{G/H\in\mathcal{O}_\textrm{Stab}(G)^\textrm{op}} \StMod(kH). $$
\end{introtheorem}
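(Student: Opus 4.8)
The plan is to derive both equivalences from a single $\mathscr{F}$-descent statement, whose essential input is the bounded permutation resolution of the trivial module supplied by the cellular chains of $X$. First I would construct the comparison functors. Restriction along an inclusion $H\hookrightarrow G$ of a finite subgroup is exact and symmetric monoidal, hence descends to a symmetric monoidal exact functor $\StMod(kG)\to\StMod(kH)$; these are natural with respect to the morphisms of the orbit category, so they assemble into a symmetric monoidal functor $\StMod(kG)\to\varprojlim_{G/H\in\mathcal{O}_{\mathscr{F}}(G)^{\op}}\StMod(kH)$. The same restrictions $\Res^{G}_{G_\sigma}$, indexed now by the cells $\sigma$ of $X$, give the second comparison functor. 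The goal is to prove each is an equivalence, and I expect both to reduce to the same descendability input.

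The key input is the augmented cellular chain complex of $X$. Since $X$ is a model for $\underline{E}G$ it is contractible, so
$$0\to C_n(X)\to\cdots\to C_0(X)\to k\to 0$$
is an exact complex of $kG$-modules; cocompactness forces each $C_i(X)\cong\bigoplus_\sigma k[G/G_\sigma]$ to be a finite direct sum of permutation modules induced from finite stabilizers, while finite-dimensionality bounds the length of the complex. This exhibits the unit $k$ of $\StMod(kG)$ in the thick tensor-ideal generated by the induced objects $\Ind^{G}_{G_\sigma}k$. I would record this as descendability, in the sense of \cite{Mat16}, of the commutative algebra object built from the cover by finite subgroups: it is precisely the \emph{finite} length of the resolution that guarantees the associated Amitsur (descent) tower degenerates after finitely many stages, rather than merely converging pro-categorically.

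With descendability established, the descent theorem of \cite{Mat16} identifies $\StMod(kG)$ with the totalization of the resulting cosimplicial diagram of module categories, which I would then recognize as $\varprojlim_{G/H\in\mathcal{O}_{\mathscr{F}}(G)^{\op}}\StMod(kH)$, yielding the first equivalence. The requisite conservativity of the comparison functor is automatic: a $kG$-module is trivial in $\StMod(kG)$ exactly when its restriction to every finite subgroup has finite projective dimension, which is the defining property of groups of type $\Phi$. For the second equivalence I would reindex along the cells of $X$. Via Elmendorf's theorem, $G$-CW complexes with stabilizers in $\mathscr{F}$ correspond to presheaves on $\mathcal{O}_{\mathscr{F}}(G)$, under which $X$ represents the terminal object; its cellular filtration expresses this terminal object through the representables $G/G_\sigma$, so the limit of $\sigma\mapsto\StMod(kG_\sigma)$ over $X^{\op}$ agrees with the limit over all of $\mathcal{O}_{\mathscr{F}}(G)^{\op}$.

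The main obstacle is verifying descendability honestly in the infinite-group setting, and in particular checking that the bounded permutation resolution still witnesses descent after passing to the Verdier localization that defines $\StMod(kG)$ as a quotient of $\Mod(kG)$. In the finite case dualizability and the Tate construction streamline this, whereas here I must confirm that the finitely generated permutation modules $k[G/G_\sigma]$ remain dualizable objects of $\StMod(kG)$ lying in the thick ideal generated by the unit, and that finite length of the resolution upgrades exactness to genuine (finitely converging) descendability. The remaining technical points are ensuring the symmetric monoidal structure is carried along the descent equivalence and that the cell-to-orbit reindexing in the final step is compatible with the monoidal limit.
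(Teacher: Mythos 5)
Your strategy for the first equivalence is essentially the paper's: establish descendability of an algebra assembled from the stabilizer permutation modules, using exactness of the augmented cellular chain complex of $X$ (contractibility), finiteness of the set of orbit representatives (cocompactness), and boundedness (finite dimension), then invoke Mathew's descent theorem and reindex the cobar totalization over the orbit category. But there is a genuine gap at the foundational step: when $G$ is infinite and $G_\sigma$ is finite, the permutation module $k[G/G_\sigma]=\Ind_{G_\sigma}^G k$ is \emph{not} a unital commutative algebra object of $\StMod(kG)$ --- pointwise multiplication of cosets has no unit, since the unit would be the infinite sum of all basis elements. It is only a cocommutative coalgebra, and correspondingly $\Ind\circ\Res\cong k[G/G_\sigma]\otimes(-)$ is a comonad, not a monad. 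So ``the commutative algebra object built from the cover by finite subgroups'' does not exist in $\StMod(kG)$ as you describe it, and Mathew's theorem cannot be applied there as stated. The paper resolves precisely this point by passing to the opposite category: $A_H=k(G/H)$ is a commutative algebra in $\StMod(kG)^{\op}$, the adjunction $\Res\colon \StMod(kG)^{\op}\leftrightarrows\StMod(kH)^{\op}\colon\Ind$ is proved monadic via an explicit natural section of the unit (in the style of Balmer), and the monad is identified with $A_H\otimes(-)$, which is what identifies the cosimplicial terms $\Mod_{\StMod(kG)^{\op}}(A^{\otimes\bullet})$ with products of the $\StMod(kH)^{\op}$. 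Without this passage to $\StMod(kG)^{\op}$ and the monadicity/module-category identification, your step ``recognize the totalization as $\varprojlim_{\mathcal{O}_{\mathscr{F}}(G)^{\op}}\StMod(kH)$'' has no content. Note also that your stated worries are misdirected: descendability needs no dualizability of $k[G/G_\sigma]$ (thick $\otimes$-ideals absorb tensoring with arbitrary objects), whereas the reindexing from the $\Delta$-indexed totalization to the orbit-category limit is a real step, which the paper handles by Kan-extending the functor to the finite-coproduct completion of $\mathcal{O}_{\mathscr{F}}(G)$ and using cofinality of the \v{C}ech nerve of $Z=\bigsqcup_{H\in\mathscr{S}}G/H$ from Mathew--Naumann--Noel.

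For the second equivalence your route does differ from the paper's and is reasonable in outline: you reindex via Elmendorf's theorem, using that the presheaf $G/H\mapsto X^H$ on $\mathcal{O}_{\mathscr{F}}(G)$ is equivalent to the terminal presheaf (fixed points of finite subgroups are contractible) and is the homotopy colimit of the representables $G/G_\sigma$ over the cells, so the two limits agree as weighted limits. The paper instead proves that $\mathcal{O}_{\mathrm{Stab}}(G)\to\mathcal{O}_{\mathscr{F}}(G)$ is cofinal via Quillen's Theorem A (the relevant slice categories have minimal elements because every finite subgroup fixes a simplex of $X$) and then extends descent data from cells of $X$ to $\mathcal{O}_{\mathrm{Stab}}(G)^{\op}$ by conjugation. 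Either reindexing is fine, but both presuppose the first equivalence, so the gap above must be repaired for your proposal to go through.
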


We provide computations for certain classes of groups. For instance, we use the classification of endotrivial modules for finite $p$-groups \cite{CT04},\cite{CT05} and Corollary \ref{exact sequence for picard groups} to determine (as an abstract group) the Picard group of the stable module category for countable locally finite $p$-groups. The following theorem summarizes the results of Subsection \ref{Computations locally finite p-groups}.

\begin{introtheorem} Let $P$ be a countable locally finite $p$-group. Then the following hold. 

\begin{itemize}
    \item[(a)] If $P=\mathbb{Z}/p^\infty$, then $T(P)\cong \mathbb{Z}/2$. 
    \item[(b)] Let $D_{2^{\infty}}=\bigcup D_{2^n}$, where $D_{2^n}$ denotes the dihedral group of order $2^n$. Then $T(D_{2^\infty})\cong \mathbb{Z}$.
    \item[(c)] Let $Q_{2^\infty}=\bigcup Q_{2^n}$, where $Q_{2^n}$ denotes the generalized quaternion group of order $2^n$. Then $T(Q_{2^\infty})\cong \mathbb{Z}/4$.  
    
    \item[(d)]  Suppose that $P$ is artinian and that it admits a tower  $$Q_1\leq Q_2\leq ...$$ whose union is $P$ and such that $Q_n$ is not  cyclic, dihedral, semi-dihedral or quaternion for all $n\geq1$. Then

$$
T(P) = \left\{
        \begin{array}{ll}
           
            \mathbb{Z}^r & \textrm{ if } P \textrm{ has }p\textrm{-rank  at most 2}\\
            \mathbb{Z}^{r+1} & \textrm{ if } P \textrm{ has }p\textrm{-rank  at least 3}
        \end{array}
    \right.
$$

\noindent where $r$ is the number of conjugacy classes of maximal elementary abelian subgroups of $P$ of rank 2.  

\item[(e)] If $P$ is not artinian, then $T(P)\cong\mathbb{Z}$.
\end{itemize}
\end{introtheorem}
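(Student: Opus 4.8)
The plan is to realize $P$ as the fundamental group of a graph of groups over a ray, apply Corollary \ref{exact sequence for picard groups}, and thereby reduce the entire computation to an inverse limit of the groups of endotrivial modules of the finite subgroups, which are governed by the Carlson--Th\'evenaz classification \cite{CT04,CT05}. First I would fix a strictly increasing exhaustion $Q_1\leq Q_2\leq\cdots$ of $P$ by finite $p$-subgroups with $\bigcup_n Q_n=P$, and present $P$ (which is of type $\Phi$) as the fundamental group of the graph of groups on the ray $v_1-v_2-\cdots$ whose vertex group at $v_n$ and edge group at $e_n=[v_n,v_{n+1}]$ are both $Q_n$, with structure maps the identity $Q_n\to Q_n$ and the inclusion $Q_n\hookrightarrow Q_{n+1}$. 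This exhibits $P$ as acting on the associated Bass--Serre tree with finite stabilizers, so Theorem \ref{decomposition of C} and Corollary \ref{exact sequence for picard groups} apply with $\Gamma$ the ray.

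The cohomology of the ray $\Gamma$ with coefficients in a functor $\A$ is computed by the two-term complex $\prod_n \A(v_n)\to\prod_n\A(e_n)$, which for our towers is exactly the standard complex whose kernel and cokernel are $\varprojlim$ and ${\varprojlim}^{1}$. Applied to $\pi_0\circ f$, where $\A(v_n)=\A(e_n)=T(Q_n)$ with the bonding maps given by restriction, this yields $H^0(\Gamma;\pi_0\circ f)=\varprojlim_n T(Q_n)$ along restriction. Applied to $\pi_1\circ f$, where $\A(v_n)=\A(e_n)=(\widehat{H}^0(Q_n;k))^\times$, it yields $H^1(\Gamma;\pi_1\circ f)={\varprojlim_n}^{1}\,(\widehat{H}^0(Q_n;k))^\times$. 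Since $\widehat{H}^0(Q_n;k)=k$ with identity restriction maps, this last tower is constant, hence Mittag--Leffler, so $H^1$ vanishes and Corollary \ref{exact sequence for picard groups} collapses to an isomorphism
$$T(P)\cong\varprojlim_n T(Q_n).$$

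It then remains to evaluate this inverse limit in each case. For (a), $T(C_{p^n})\cong\Z/2$ (generated by $\Omega$) once $p^n\geq 3$, with isomorphic restriction maps, so the limit is $\Z/2$; the single exceptional term $T(C_2)=0$ does not affect the tail. For (b) and (c) I would insert the Carlson--Th\'evenaz values of $T(D_{2^n})$ and $T(Q_{2^n})$ together with the explicit restriction maps along $D_{2^n}\hookrightarrow D_{2^{n+1}}$ and $Q_{2^n}\hookrightarrow Q_{2^{n+1}}$: the two free generators attached to the two classes of Klein four subgroups of a dihedral group combine to leave a single $\Z$, while for the quaternion tower the period-four generator $\Omega$ survives and the auxiliary $\Z/2$ summand is annihilated, leaving $\Z/4$. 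For (d) I would use that an artinian locally finite $p$-group is \v{C}ernikov, hence of finite $p$-rank, so that for large $n$ the poset of elementary abelian subgroups of $Q_n$ is stable; combined with the exclusion of the cyclic, dihedral, semidihedral and quaternion types (which by \cite{CT04,CT05} forces each $T(Q_n)$ to be free abelian of rank $r$ or $r+1$ according to the $p$-rank) and with eventually isomorphic restriction maps, the limit is $\Z^{r}$ or $\Z^{r+1}$. For (e), non-artinianness forces $p$-rank at least $2$ (a rank-one locally finite $p$-group is cyclic or generalized quaternion, hence \v{C}ernikov), so $P$ contains $(\Z/p)^2$; restriction sends $\Omega$ to a generator of $T((\Z/p)^2)\cong\Z$, whence $\Omega$ has infinite order and $\Z\hookrightarrow T(P)$, and one shows conversely that every compatible system is a multiple of $\Omega$, giving $T(P)=\Z$.

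The main obstacle is precisely the analysis of the restriction maps in the tower $\{T(Q_n),\res\}$ and the resulting inverse limit. The classification supplies the groups $T(Q_n)$ themselves, but controlling how the torsion-free generators (indexed by conjugacy classes of maximal rank-two elementary abelian subgroups) and the torsion transform under restriction---so as to determine which classes are detected, merged, or killed in the limit---is the delicate point. In case (d) this is where the \v{C}ernikov structure theory is needed to guarantee stabilization of the ranks and the bonding maps, and in case (e) it is where one must show that the growth of the subgroup lattice collapses all the elementary-abelian generators onto the single diagonal class of $\Omega$ while annihilating the torsion.
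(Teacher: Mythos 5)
Your reduction is the same as the paper's: the ray-shaped graph of groups with vertex and edge groups $Q_n$, the identification of $H^0$ and $H^1$ of the ray with $\varprojlim$ and ${\varprojlim}^{1}$ of the tower, and the vanishing of ${\varprojlim}^{1}$ for the eventually constant tower $k^\times$ are precisely Proposition \ref{prop toral}, and your case (a) is the paper's argument verbatim. The gaps are in what you do with $\varprojlim_n T(Q_n)$ afterwards. The most serious is case (e): knowing only that $P$ has $p$-rank at least $2$ gives you essentially nothing, since the terms $Q_n$ of an arbitrary tower may then have many conjugacy classes of maximal rank-$2$ elementary abelian subgroups, so each $T(Q_n)$ can be free abelian of large and varying rank, and your final claim that ``every compatible system is a multiple of $\Omega$'' is exactly the hard statement to be proved --- it does not follow from anything you have set up, and proving it for an arbitrary tower would require the full fusion analysis of case (d). The paper's route is different and stronger: by \cite[Lemma 3.1]{KW}, a non-artinian locally finite $p$-group contains an \emph{infinite} elementary abelian subgroup, so one may \emph{choose} the tower so that every term has $p$-rank at least $p+4$; then \cite[Theorem 3.5]{Maz} gives $T(P_n)\cong\mathbb{Z}$ generated by $\Omega$ with identity restriction maps, and the limit is immediately $\mathbb{Z}$. (The freedom to choose the tower is legitimate because Proposition \ref{prop toral}(ii) holds for any tower.)

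Second, for (b), (c) and (d) you state the expected answers but defer the determination of the restriction maps, which you yourself flag as ``the main obstacle''; this is where all the content of Propositions \ref{Prop 5.7} and \ref{Prop 5.9} lies. For (b) and (c) the paper pins down $\mathrm{Res}\colon T(D_{2^n})\to T(D_{2^{n-1}})$ and $\mathrm{Res}\colon T(Q_{2^{n+1}})\to T(Q_{2^n})$ by restricting further to Klein four (resp.\ order-$8$ quaternion) subgroups, using that the two detecting subgroups of the smaller group become conjugate in the larger one together with the detection theorem of \cite{CT00}; this yields, e.g., $m\Omega+n[L]\mapsto (m-n)\Omega$ in the dihedral case, from which $\varprojlim\cong\mathbb{Z}$ follows, and the analogous computation kills the $\mathbb{Z}/2$ summand in the quaternion case. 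For (d), ``stability of the poset for large $n$'' is not the right mechanism: the number of components of $\mathcal{E}_{\geq 2}(Q_n)/Q_n$ can exceed $r+1$ at every finite stage. What the paper actually proves is that any extra component of $\mathcal{E}_{\geq 2}(Q_n)/Q_n$ fuses into one of the stable classes inside $Q_{n+1}$, that the corresponding generators $N^n_j$ with $j>r$ are annihilated by $\mathrm{Res}^{Q_{n+1}}_{Q_n}$, and that after the centralizers $C_{Q_n}(u_i)/\langle u_i\rangle$ stabilize the remaining generators satisfy $\mathrm{Res}[N^{n+1}_j]=[N^n_j]$; only then does the limit come out to $\mathbb{Z}^r$ or $\mathbb{Z}^{r+1}$. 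Without these computations the proposal establishes only the reduction, not the theorem.
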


Finally, we provide a tool to compute the Picard group for a certain class of groups of type $\Phi$ that we call amalgam groups. These groups have geometric dimension 2 with respect to the family of finite groups. Let $G$ be an amalgam group acting on a tree. Then using Corollary \ref{exact sequence for picard groups} to compute the Picard group of $G$ will involve computing invariants of the stable module $\infty$-category for infinite groups, which could be as hard to compute as the invariants of the stable module $\infty$-category of $G$. Hence the importance of the following result (see Corollary \ref{exact sequence for triangles of groups}). 

\begin{introtheorem}
     Let $G$ be a group admitting a 2-dimensional model $X$ for $\underline{E}G$  such that the fundamental domain of the action is homeomorphic to the standard 2-simplex $\Delta^2$. Let $\mathcal{T}$ denote the barycentric subdivision of $\Delta^2$. If the isotropy group of the 2-simplex is trivial,  then we have an exact sequence of abelian groups $$0\to H^1(\mathcal{T}; \pi_1\circ f)\to T(G)\to H^0(\mathcal{T}; \pi_0\circ f)\to0$$ where $f$ is the composition of the Picard space functor and the stable module $\infty$-category functor. 
\end{introtheorem}

%%%%%%%%%%%%%%%%%%%%%%%%% 

\noindent \begin{bf}Acknowledgments.\end{bf} I heartily thank my supervisor Jos\'e Cantarero for his support and encouragement. I would like to thank Omar Antol\'in and Anish Chedalavada for helpful conversations on this work. I am very grateful to Peter Symonds for his hospitality during a visit to the University of Manchester, and for several interesting discussions on this project. I thank Luis-Jorge Sánchez-Saldaña and Ian Leary for useful remarks on the poset $\mathscr{S}_H$ appearing in the proof of Proposition \ref{cofinalidad de la inclusion de subgrupos de isotropia}. I also thank two anonymous referees for their valuable comments, especially for pointing out a mistake in an early version of this work.  
I further wish to thank the Hausdorff Research Institute for Mathematics for the hospitality during the
trimester program ‘Spectral Methods in Algebra, Geometry, and Topology’, funded
by the Deutsche Forschungsgemeinschaft under Germany’s Excellence Strategy–EXC-2047/1–390685813. 

This work was supported by the \textit{Consejo Nacional de Humanidades, Ciencias y Tecnologías} under the program ‘Becas Nacionales de Posgrado’, and by the  \textit{Deutsche Forschungsgemeinschaft} (DFG, German Research Foundation) — SFB-TRR 358/1 2023 — 491392403.

\section{Preliminaries}

In this paper, we let $k$ denote a field of prime characteristic $p$. For a given group $G$, we let $\Mod(kG)$ denote the category of all $kG$--modules, and let $\otimes$ denote $\otimes_k$ equipped with the diagonal action of $G$, unless it is specified otherwise.  
 
%%%%%%%%%%%%%%%%%%%

\subsection{Groups of type $\Phi$}

Let $\textrm{projdim}_{kG} M$ denote the projective dimension of the $kG$--module $M$, that is, the shortest possible length of a projective resolution of the module or $\infty$ if there is no finite resolution. 

\begin{definition}
Let $G$ be a group. If any $kG$--module $M$ satisfies that $\textrm{projdim}_{kG} M$ is finite if and only if $\textrm{projdim}_{kF} M$ is finite for any finite subgroup $F$ of $G$, then we say that $G$ is a group of \textit{type $\Phi$} (or $\Phi_k$ if we need to emphasise the role of $k$). 
\end{definition}

The following result gives us a large collection of groups of type $\Phi$ \cite[Proposition 2.5]{MS}. In particular, groups of finite virtual cohomological dimension are groups of type $\Phi$, as well as groups that admit a finite-dimensional model for the classifying space for proper actions.

\begin{proposition}\label{exact sequence for groups of type}
Let $G$ be a group. If there exists an exact complex of $kG$--modules $$0\to C_n\to...\to C_0\to k\to 0$$ such that each $C_i$ is a direct summand of a sum of modules of the form $k\ind_H^G$ with $H$ of type $\Phi$ and $\textrm{findim} (kH)\leq m$ for a fixed $m$, then $G$ is of type $\Phi$. 
\end{proposition}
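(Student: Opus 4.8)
The plan is to reduce to the nontrivial implication and then exploit the given finite resolution of $k$. First I observe that one direction of the type $\Phi$ condition is automatic: since $kG$ is free as a $kF$-module for every subgroup $F$ (choose coset representatives), restriction sends projective $kG$-modules to projective $kF$-modules, whence $\textrm{projdim}_{kF} M \le \textrm{projdim}_{kG} M$ for every finite $F\le G$. Thus if $\textrm{projdim}_{kG} M<\infty$ then $\textrm{projdim}_{kF} M<\infty$ for all finite $F$. It therefore suffices to prove the converse: assuming $\textrm{projdim}_{kF} M<\infty$ for every finite subgroup $F\le G$, produce a finite bound on $\textrm{projdim}_{kG} M$.

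To do this I would tensor the hypothesised complex with $M$ over $k$. Since $k$ is a field, $-\otimes M$ is exact, so
$$0\to C_n\otimes M\to\cdots\to C_0\otimes M\to M\to 0$$
remains exact. The point is to control $\textrm{projdim}_{kG}(C_i\otimes M)$. Using the projection formula $(k\ind_H^G)\otimes M\cong (M\restr_H)\ind_H^G$, together with the facts that $-\otimes M$ and induction commute with direct sums and with passage to summands, each $C_i\otimes M$ is a direct summand of a direct sum of modules of the form $(M\restr_H)\ind_H^G$, where $H$ ranges over the type-$\Phi$ subgroups occurring in $C_i$.

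Next I would bound $\textrm{projdim}_{kG}\big((M\restr_H)\ind_H^G\big)$ uniformly by $m$. Because $kG$ is free over $kH$, induction is exact and preserves projectives, so $\textrm{projdim}_{kG}\big((M\restr_H)\ind_H^G\big)\le \textrm{projdim}_{kH}(M\restr_H)$. Now $M\restr_H$ has finite projective dimension over $kH$: the finite subgroups of $H$ are finite subgroups of $G$, over which $M$ (hence $M\restr_H$) has finite projective dimension by assumption, so since $H$ is of type $\Phi$ the module $M\restr_H$ has finite projective dimension over $kH$; by $\textrm{findim}(kH)\le m$ this dimension is at most $m$. As $m$ is a uniform bound, and since the projective dimension of an arbitrary direct sum is the supremum of those of its summands (and a summand cannot exceed the whole), I conclude $\textrm{projdim}_{kG}(C_i\otimes M)\le m$ for every $i$.

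Finally I would invoke the standard dimension-shifting inequality: if $M$ admits a finite exact resolution whose terms all have projective dimension at most $m$, then $\textrm{projdim}_{kG} M\le m+n$, which follows by splitting the resolution into short exact sequences of syzygies and repeatedly applying $\textrm{projdim}(C)\le\max\{\textrm{projdim}(B),\textrm{projdim}(A)+1\}$ for $0\to A\to B\to C\to 0$. In particular $\textrm{projdim}_{kG} M$ is finite, as required. The main technical point is the uniform bound in the third step: one must verify that the type-$\Phi$ hypothesis for $H$ genuinely transfers finiteness from the finite subgroups of $G$ to $kH$-modules, and that the single constant $m$ simultaneously controls all of the (possibly infinitely many) subgroups $H$ appearing across the complex; the remainder is a routine assembly of the projection formula and elementary homological inequalities.
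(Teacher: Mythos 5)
Your proof is correct. The paper itself does not prove this proposition — it quotes it from Mazza--Symonds \cite[Proposition 2.5]{MS} — and your argument (the tensor identity $(k\ind_H^G)\otimes M\cong (M\restr_H)\ind_H^G$, the uniform bound $\textrm{projdim}_{kH}(M\restr_H)\le m$ obtained from $H$ being of type $\Phi$ combined with $\textrm{findim}(kH)\le m$, and dimension shifting along the tensored complex to get $\textrm{projdim}_{kG}M\le m+n$) is precisely the standard proof given there.
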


Moreover, if $G$ is a group of type $\Phi$, then the  \textit{finitistic dimension} of $kG$ is finite. Recall that the finitistic dimension is given by $$\sup\{\textrm{projdim}_{kG} M \ | \ \textrm{projdim}_{kG}M<\infty\}.$$ In particular, a free abelian group of infinite rank\footnote{The rank of an abelian group is just the rank as a $\mathbb{Z}$--module.} is not of type $\Phi$.

%%%%%%%%%%%%%%%%%%%%%%%%%55

\subsection{The stable module category for groups of type $\Phi$}\label{stable module category}

Let $G$ be a group. The \textit{projective stable module category} $\underline{\Mod}(kG)$ is the category whose objects are $kG$--modules and morphisms are equivalence classes of homomorphisms   under the relation $f\sim g$ if and only if $f-g$ factors through a projective. We have a self-functor $\Omega\colon \underline{\Mod}(kG)\to\underline{\Mod}(kG)$ given by taking the kernel of a surjection $P\to M$, where $P$ is projective. In particular,  for any pair $M,N$  of $kG$--modules there is a natural map $$\Omega\colon\underline{\Hom}_{kG}(M,N)\to \underline{\Hom}_{kG}(\Omega M,\Omega N).$$

\begin{definition}
    Let $M,N$ be $kG$--modules. The \textit{complete cohomology} is defined  via $$\widehat{\Ext}^r_{kG}(M,N):=\lim_{n}\Hom_{\underline{\Mod}(kG)}(\Omega^{n+r}M,\Omega^nN).$$
\end{definition}

In particular, a $kG$--module $M$ has finite projective dimension if and only if $\widehat{\Ext}^0_{kG}(M,M)=0$ (see \cite[ Lemma 3.1]{Ben97}). 

\begin{definition}\label{stable module category M-P}
Let $\underline{\StMod}(kG)$ be the category whose objects are all the $kG$--modules and the morphisms between two objects $M, N$ are given by complete cohomology $$\Hom_{\underline{\StMod}(kG)}(M,N)=\widehat{\Ext}^0_{kG}(M,N).$$
\end{definition}

\begin{remark}
For groups of type $\Phi$, let $\widehat{H}^i(G;N)$ denote $\widehat{\Ext}_{kG}^i(k,N)$ for all $i\in \mathbb{Z}$. This coincides with generalized Tate-Farrell cohomology defined by Ikenaga \cite{Ike} when the group has finite generalized cohomological dimension. In particular, for finite groups it coincides with Tate cohomology. 
\end{remark}

%%%%%% 

Let $\textrm{K}_{tac}(kG)$ denote the category of totally acyclic complexes of projective $kG$--modules and chain maps up to homotopy. Let $\textrm{GP}(kG)$ (resp. $\underline{\textrm{GP}}(kG)$) denote the full subcategory of $\Mod(kG)$ (resp. $\underline{\Mod}(kG)$)  on the \textit{Gorenstein projective modules}, i.e. the modules that are isomorphic to a kernel in a totally acyclic complex of projectives.

 For groups of type $\Phi$, any complex with finitely many non-zero homology groups has a complete resolution. Then we have a functor  $$\textrm{CompRes}\colon D^b(\Mod(kG)) \to K_{tac}(kG)$$ where $D^b(\Mod(kG))$ is the derived  category of complexes of $kG$--modules with only finitely many nonzero homology groups. The kernel of  this functor contains the homotopy category of bounded complexes of projective $kG$--modules denoted by $K^b(\mathrm{Proj}(kG))$.

The categories $\underline{\mathrm{GP}}(kG),\ D^b(\Mod(kG))/K^b(\textrm{Proj}(kG))$ and $K_{\textrm{tac}}(kG)$ are equivalent as triangulated categories (this holds for more general rings, see \cite{Bel}, \cite{Buc87}). Mazza and Symonds prove that these equivalences factor through $\underline{\StMod}(kG)$ for groups of type $\Phi$ (see \cite[Theorem 3.9]{MS}).  

\begin{theorem}

For groups of type $\Phi$, the following categories are equivalent.

\begin{itemize}
    \item $\underline{\StMod}(kG)$.
    \item $D^b(\Mod(kG))/K^b(\mathrm{Proj}(kG))$.
    \item $K_{tac}(kG)$.
    \item $\underline{\textrm{GP}}(kG)$.
\end{itemize}

\noindent They are equivalent as triangulated categories, except for $\underline{\StMod}(kG)$. 

\end{theorem}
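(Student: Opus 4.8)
The plan is to take the equivalences among $D^b(\Mod(kG))/K^b(\mathrm{Proj}(kG))$, $K_{tac}(kG)$ and $\underline{\mathrm{GP}}(kG)$ as the classical Buchweitz--Beligiannis input \cite{Buc87,Bel}, and to concentrate on grafting $\underline{\StMod}(kG)$ onto this chain, which is where the hypothesis of type $\Phi$ does the work. For the classical part I would recall the explicit functors: the functor $K_{tac}(kG)\to\underline{\mathrm{GP}}(kG)$ sending a totally acyclic complex $P_\bullet$ to the cycle module $Z_0(P_\bullet)=\Ker(P_0\to P_{-1})$ is an equivalence of triangulated categories, with quasi-inverse given by choosing, for each Gorenstein projective module, a totally acyclic complex realizing it as a cycle (such a complex being unique up to homotopy). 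Composing the inclusion $\mathrm{GP}(kG)\hookrightarrow\Mod(kG)$ with the canonical functor to $D^b(\Mod(kG))/K^b(\mathrm{Proj}(kG))$ yields the second equivalence; here one uses that a morphism of Gorenstein projectives factors through a projective precisely when it becomes zero in the Verdier quotient.

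The role of type $\Phi$ enters through the functor $\mathrm{CompRes}\colon D^b(\Mod(kG))\to K_{tac}(kG)$ recorded above: since every complex with finitely many nonzero homology groups admits a complete resolution, $\mathrm{CompRes}$ is defined on all of $D^b(\Mod(kG))$, and since its kernel contains $K^b(\mathrm{Proj}(kG))$ it descends to a functor $D^b(\Mod(kG))/K^b(\mathrm{Proj}(kG))\to K_{tac}(kG)$; one then checks this descended functor is inverse to the composite $K_{tac}(kG)\to\underline{\mathrm{GP}}(kG)\to D^b(\Mod(kG))/K^b(\mathrm{Proj}(kG))$, which pins down the triangulated equivalences among the first three categories.

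It remains to compare the common value of these three categories with $\underline{\StMod}(kG)$, and here I would work through the canonical functor $\underline{\mathrm{GP}}(kG)\to\underline{\StMod}(kG)$ that is the identity on objects and sends a stable map $f\in\underline{\Hom}_{kG}(M,N)=\Hom_{\underline{\Mod}(kG)}(M,N)$ to the compatible family $(\Omega^nf)_n$, i.e. to its image under the canonical comparison map $\underline{\Hom}_{kG}(M,N)\to\widehat{\Ext}^0_{kG}(M,N)$ into the limit. For full faithfulness the key observation is that $\Omega$ restricts to an autoequivalence of $\underline{\mathrm{GP}}(kG)$, so for Gorenstein projective $M,N$ the transition maps in the system $\Hom_{\underline{\Mod}(kG)}(\Omega^nM,\Omega^nN)$ are all isomorphisms and the limit collapses to the $n=0$ term; thus $\widehat{\Ext}^0_{kG}(M,N)\cong\underline{\Hom}_{kG}(M,N)$. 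For essential surjectivity I would feed an arbitrary $kG$--module $M$, viewed as a complex concentrated in degree $0$, through $\mathrm{CompRes}$: the resulting totally acyclic complex has a cycle $G_M\in\mathrm{GP}(kG)$, and because a complete resolution agrees with an ordinary projective resolution in all sufficiently high degrees we obtain stable isomorphisms $\Omega^nM\cong\Omega^nG_M$ for $n\gg0$; since $\Omega$ is invertible in $\underline{\StMod}(kG)$ this already gives $M\cong G_M$ in $\underline{\StMod}(kG)$.

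Combining the three steps gives an equivalence $\underline{\mathrm{GP}}(kG)\xrightarrow{\simeq}\underline{\StMod}(kG)$ of ordinary categories, and transporting the triangulated structure across the classical equivalences equips $\underline{\StMod}(kG)$ with the stated structure; this is exactly why the theorem singles out $\underline{\StMod}(kG)$ as matching only as a category rather than through an a priori triangulated comparison functor. The main obstacle I anticipate is the essential surjectivity step: one must be sure that the type $\Phi$ hypothesis supplies complete resolutions for \emph{every} module, and must control the comparison between the complete resolution and a genuine projective resolution precisely enough to conclude the stable isomorphisms $\Omega^nM\cong\Omega^nG_M$ for large $n$. Full faithfulness, by contrast, is essentially formal once the invertibility of $\Omega$ on $\underline{\mathrm{GP}}(kG)$ is in hand.
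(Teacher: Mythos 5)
Your proposal is correct and takes essentially the same route as the paper, which itself offers no independent proof of this statement but defers the three-way triangulated equivalence to \cite{Buc87}, \cite{Bel} and the factorization through $\underline{\StMod}(kG)$ to \cite[Theorem 3.9]{MS}. Your two non-classical steps are precisely the ingredients the paper records elsewhere: the collapse of the system defining $\widehat{\Ext}^0_{kG}(M,N)$ when $M,N$ are Gorenstein projective (the paper's lemma on cofibrant modules, proved the same way via invertibility of $\Omega$ up to projectives) and essential surjectivity via the complete resolutions supplied by the type $\Phi$ hypothesis, which is exactly how the inclusion $\underline{\mathrm{GP}}(kG)\hookrightarrow\underline{\StMod}(kG)$ is seen to be an equivalence.
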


These equivalences define a triangulated structure on $\underline{\StMod}(kG)$. The
distinguished triangles are all the triangles isomorphic to a short exact sequence of modules and the shift $\Omega^{-1}$ is obtained by taking the kernel in degree -1 of a complete resolution of the module.

%%%%%%%%%%%%%%%%%%%%

\subsection{The Picard group of $\underline{\StMod}(kG)$}

Recall that the Picard Group $\textrm{PicGp}(\C)$ of a symmetric monoidal category $(\C,\otimes,\mathbf{1})$ is  the group of isomorphism classes of objects which are invertible with respect to the tensor product $\otimes$. 

We will consider $\underline{\StMod}(kG)$ with the symmetric monoidal structure inherited from the symmetric monoidal structure of $\Mod(kG)$ given by 
the tensor product over $k$. In fact, this structure turns $\underline{\StMod}(kG)$ into a tensor triangulated category in the sense of Balmer. In particular, we will write $T(G)$ to denote the Picard group of $\underline{\StMod}(kG)$. Note that if $G$ is finite, this group corresponds to the so-called group of endotrivial modules (see \cite{Maz}, \cite{The}). In this context, we say that a $kG$--module $M$ is \textit{invertible} if $[M]$ belongs to $T(G)$.

\begin{proposition}
Let $G$ be a group of type $\Phi$. Let $M$ be a $kG$--module. Then $M$ is invertible if and only if its restriction to any finite subgroup is invertible.
\end{proposition}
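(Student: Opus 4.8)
The plan is to exploit the fact that restriction to the finite subgroups of $G$ forms a jointly conservative family of symmetric monoidal triangulated functors, where conservativity is essentially a restatement of the defining property of type $\Phi$. The forward implication is then formal and the converse carries all of the content.

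First the easy direction. For a finite subgroup $F\le G$, the restriction functor $\Res^G_F\colon \underline{\StMod}(kG)\to\underline{\StMod}(kF)$ is triangulated and symmetric monoidal: it fixes the unit $k$ and commutes with $\otimes_k$ together with the diagonal action. Any symmetric monoidal functor sends invertible objects to invertible objects, since a witness $M\otimes N\cong k$ restricts to $\Res^G_F M\otimes\Res^G_F N\cong k$. Hence invertibility of $M$ is inherited by each of its restrictions, which is the ``only if'' part.

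For the converse I would first record that the family $\{\Res^G_F\}_F$, indexed by the finite subgroups of $G$, is jointly conservative. An object $X$ is zero in $\underline{\StMod}(kG)$ precisely when $\id_X=0$, i.e. when $\widehat{\Ext}^0_{kG}(X,X)=0$, which by \cite[Lemma 3.1]{Ben97} means $\mathrm{projdim}_{kG}X<\infty$; by the definition of type $\Phi$ this holds iff $\mathrm{projdim}_{kF}X<\infty$ for every finite $F$, that is, iff $\Res^G_F X=0$ for all $F$. Passing to cofibers, a morphism $f$ is an isomorphism iff $\cofib(f)=0$, and since each $\Res^G_F$ is triangulated we have $\cofib(\Res^G_F f)\cong\Res^G_F\cofib(f)$; thus $f$ is an isomorphism iff $\Res^G_F f$ is an isomorphism for every finite $F$. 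Now assuming $\Res^G_F M$ invertible for all $F$, I would use the closed symmetric monoidal structure on $\underline{\StMod}(kG)$: set $DM:=\underline{\Hom}(M,k)$, the internal function object into the unit, with its evaluation $\mathrm{ev}\colon DM\otimes M\to k$. Restriction preserves $\otimes$ and the internal hom, so $\Res^G_F DM\simeq\underline{\Hom}(\Res^G_F M,k)$ and $\Res^G_F(\mathrm{ev})$ is identified with the evaluation of $\Res^G_F M$. As $\Res^G_F M$ is invertible it is dualizable with dual $\underline{\Hom}(\Res^G_F M,k)$ and its evaluation is an isomorphism in $\underline{\StMod}(kF)$; by the conservativity just established, $\mathrm{ev}$ is then an isomorphism in $\underline{\StMod}(kG)$, giving $DM\otimes M\cong k$. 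Hence $M$ is invertible with inverse $DM$.

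The main obstacle will be the bookkeeping around the closed structure rather than any deep estimate. One must know that $\underline{\StMod}(kG)$ is genuinely closed symmetric monoidal and that restriction preserves the internal hom and the evaluation counit; this reduces to the unproblematic statement that $\Hom_k(M,N)$ with conjugation action is unchanged as a $k$-vector space under $\Res^G_F$, combined with exactness of restriction on passing to the stable quotient. The subtle point worth emphasising is that for infinite $G$ an invertible module need \emph{not} be finite-dimensional, so the naive linear dual $\Hom_k(M,k)$ is ill-behaved and must be replaced by the internal function object $\underline{\Hom}(M,k)$ of the stable category; verifying compatibility of this stable dual with restriction is exactly what makes the conservativity argument go through.
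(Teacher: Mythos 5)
Your proposal is correct and follows essentially the same route as the paper: both establish that the restrictions to finite subgroups form a jointly conservative family (a morphism is a stable isomorphism iff its cone has finite projective dimension, which type $\Phi$ lets you detect on finite subgroups), and then apply this to the evaluation map $M\otimes M^\ast\to k$, whose restrictions are isomorphisms by invertibility of each $\Res^G_F M$. Your extra care about using the internal/stable dual rather than the naive linear dual is a reasonable refinement but not a departure from the paper's argument.
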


\begin{proof}

Let $f\colon M\to N$ be a homomorphism of $kG$--modules. Consider the distinguished triangle $$M\xrightarrow[]{f} N\to C(f)\to\Omega^{-1}M.$$ Then $f$ is an isomorphism in $\underline{\StMod}(kG)$ if and only if its cone $C(f)$ is trivial, that is, if it has finite projective dimension. Since $G$ is of type $\Phi$, finite projective dimension is detected by the family of finite subgroups, and we have that $f$ is an isomorphism in $\underline{\StMod}(kG)$  if and only if $f\restr_H$ induces an isomorphism in $\underline{\StMod}(kH)$ for any finite subgroup $H$ of $G$.

Consider the evaluation map $\textrm{ev}\colon M\otimes M^*\to k$. If $M\restr_H$ is invertible, it is well known that $\textrm{ev}\restr_H$ induces an isomorphism in $\underline{\StMod}(kH)$, and then it follows that $M$ is invertible. 
\end{proof}

\subsection{Descent} 

Let $(\C,\otimes,\mathbf{1})$ be a stable homotopy theory, that is, a presentable, symmetric monoidal stable $\infty$-category, where the tensor product is cocontinuous in each variable. A full subcategory of $\C$ is called \textit{thick} if it is closed under finite colimits and limits, and retracts. Further, it is called a \textit{thick $\otimes$-ideal} if in addition  it is a $\otimes$-ideal.

A commutative algebra object $A$ of $\C$ \textit{admits descent} if $\C$ is the smallest thick $\otimes$-ideal containing $A$. The following result can be found in \cite{Mat16} as Proposition 3.22.

\begin{proposition}
    Let $A$ be a commutative algebra object in $\C$ that admits descent. Then the adjunction given by tensoring with $A$ and forgetting is comonadic. In particular, the natural functor to the totalization $$\C\to \mathrm{Tot}\left( \Mod(A)\doublerightarrow{}{}\Mod(A\otimes A)\triplerightarrow{}{}...\right)$$ is an equivalence.

\end{proposition}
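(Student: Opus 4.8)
The plan is to read this as an instance of comonadic descent and to deduce it from the comonadic form of the $\infty$-categorical Barr--Beck--Lurie theorem. Write $F=A\otimes(-)\colon\C\to\Mod(A)$ for the extension-of-scalars (free module) functor and $U\colon\Mod(A)\to\C$ for its right adjoint, the forgetful functor, so that $FU=A\otimes U(-)$ is the associated comonad on $\Mod(A)$. Saying that the adjunction is comonadic means exactly that the comparison functor from $\C$ to the $\infty$-category of $FU$-comodules is an equivalence, and by the comonadic Barr--Beck--Lurie theorem this reduces to two assertions about the \emph{left} adjoint $F$: that $F$ is conservative, and that $F$ preserves totalizations of those cosimplicial objects of $\C$ that become split after applying $F$. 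The descent hypothesis will be used to verify both.

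Conservativity is the transparent half. Since $F=A\otimes(-)$ is exact, a morphism $f$ of $\C$ satisfies that $Ff$ is an equivalence if and only if $A\otimes\mathrm{cofib}(f)\simeq 0$; hence it is enough to show that $A\otimes X\simeq 0$ forces $X\simeq 0$. Given such an $X$, I would consider the full subcategory $\{Y\in\C : Y\otimes X\simeq 0\}$. Exactness of $-\otimes X$ shows it is closed under fibres, cofibres, shifts and retracts, and associativity of the tensor product shows it is a $\otimes$-ideal; thus it is a thick $\otimes$-ideal. It contains $A$ by assumption, so by the descent hypothesis it is all of $\C$, and evaluating at the unit $\mathbf 1$ gives $X\simeq\mathbf 1\otimes X\simeq 0$.

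The main obstacle is the totalization condition, because $F$ is only a left adjoint and so has no formal reason to commute with the relevant limits. The way around this is to extract genuine finiteness from descent. Setting $I=\mathrm{fib}(\mathbf 1\to A)$ with its canonical map $I\to\mathbf 1$, I would first recall (\cite{Mat16}) that $A$ admits descent if and only if the tower $\cdots\to I^{\otimes 2}\to I\to\mathbf 1$ is pro-zero, equivalently if and only if there is an integer $N$ for which the composite $I^{\otimes N}\to\mathbf 1$ is null. Granting this reformulation, for any object $X$ the coaugmented Amitsur cosimplicial object $X\to\{X\otimes A^{\otimes(\bullet+1)}\}$ has a totalization tower whose successive fibres are built from the tensor powers $X\otimes I^{\otimes(\bullet+1)}$; the uniform nullity of $I^{\otimes N}\to\mathbf 1$ makes this tower essentially constant beyond stage $N$, so the totalization is computed by a \emph{finite} partial totalization. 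An exact functor preserves finite limits, and the same bound $N$ survives after tensoring with $A$, so $F=A\otimes(-)$ preserves the totalizations at issue (and in fact the Amitsur complex converges for every $X$). Combined with conservativity, this verifies the hypotheses of the comonadic Barr--Beck--Lurie theorem.

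With both conditions in hand, comonadicity follows. The final ``in particular'' is then the standard identification of the $\infty$-category of $FU$-comodules with the totalization of the cobar (Amitsur) diagram of module categories: the cofree comodule functors are the iterated extensions of scalars, and the $n$-th cosimplicial term is $\Mod(A^{\otimes(n+1)})$, so the comodule category is $\mathrm{Tot}\left(\Mod(A)\rightrightarrows\Mod(A\otimes A)\rightrightarrows\cdots\right)$ and the comparison functor is exactly the stated equivalence. I expect the reformulation of descent as tensor-nilpotence of the augmentation fibre $I$, and its use to truncate the infinite totalization to a finite limit, to be the technical heart of the argument; conservativity and the identification of comodules with the Amitsur totalization are comparatively formal.
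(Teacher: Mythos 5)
A framing remark first: the paper offers no proof of this proposition at all --- it is quoted verbatim from \cite{Mat16} (Proposition 3.22) --- so the benchmark is Mathew's own proof, whose strategy (Barr--Beck--Lurie comonadicity combined with the reformulation of descent as tensor-nilpotence of $I=\mathrm{fib}(\mathbf{1}\to A)$) is exactly the one you adopt. Within that strategy, your conservativity argument is complete and correct, the equivalence between descendability and the existence of $N$ with $I^{\otimes N}\to\mathbf{1}$ null is correct and is indeed in \cite{Mat16}, and so is the identification of $\mathrm{fib}\bigl(X\to\mathrm{Tot}_n(X\otimes A^{\otimes\bullet+1})\bigr)$ with $X\otimes I^{\otimes(n+1)}$.

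The gap is in your verification of the second Barr--Beck--Lurie hypothesis. As you yourself state it, the theorem requires $F=A\otimes(-)$ to preserve totalizations of \emph{all} $F$-split cosimplicial objects of $\C$, but you check this only for the Amitsur complexes $X\otimes A^{\otimes\bullet+1}$ of objects $X\in\C$, and these are not the cosimplicial objects that carry the weight of the theorem. Concretely, essential surjectivity of the comparison functor requires the hypothesis for $U(\mathrm{cobar}(M))$, where $M$ is an \emph{arbitrary} comodule (descent datum): this cosimplicial object has the same terms $A^{\otimes(n+1)}\otimes UM$ as an Amitsur complex, but one of its coface maps is the coaction of $M$ rather than insertion of the unit $\mathbf{1}\to A$, so it is $F$-split without being the Amitsur complex of any object of $\C$; identifying it with one is essentially the statement being proved. (Full faithfulness, by contrast, does follow from Amitsur convergence alone.) The missing step --- and the technical heart of Mathew's proof --- is the lemma that \emph{every} $A$-split cosimplicial object $Y^\bullet$ has pro-constant $\mathrm{Tot}$ tower. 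One proves this with the same nilpotence input you already have: each $Y^\bullet\otimes I^{\otimes k}\otimes A$ is split (a splitting of $Y^\bullet\otimes A$ survives tensoring), so its Tot tower is pro-constant; the cofiber sequences $Y^\bullet\otimes I^{\otimes(k+1)}\to Y^\bullet\otimes I^{\otimes k}\to Y^\bullet\otimes I^{\otimes k}\otimes A$ for $0\le k<N$ then show that the cofiber of the map of Tot towers induced by $Y^\bullet\otimes I^{\otimes N}\to Y^\bullet$ is a finite iterated extension of pro-constant towers; and since that map is null (it is $\mathrm{id}_{Y^\bullet}$ tensored with a null map), the Tot tower of $Y^\bullet$ is a retract of this cofiber, hence pro-constant, and exactness of $A\otimes(-)$ gives preservation of the limit. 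A minor related imprecision: pro-constancy does not mean that $\mathrm{Tot}$ literally equals a finite stage $\mathrm{Tot}_N$; the statement you need, which does hold, is that exact functors preserve limits of pro-constant towers. With this lemma replacing your Amitsur-only verification, your argument becomes Mathew's proof.
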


%%%%%%%%%%%%%%%%%%%%%%%%5

\section{The Stable Module $\infty$-Category}
%%%%%%%%%%%%%%%%%%%%%%%%%%

In this section we will closely follow Benson's ideas \cite[Section 10]{Ben97} to  show that the category of $kG$--modules admits a combinatorial symmetric monoidal model structure, for any group $G$ of type $\Phi$. While Benson's treatment deals with a more general class of groups, some restrictions on the modules are needed. On the other hand, working with groups of type $\Phi$ will allow us to consider all $kG$--modules. We also emphasize that it is not known whether the class of groups of type $\Phi$ is completely contained in the class of groups that appear in Benson's work, this is related to a conjecture by O. Talelli \cite[Conjecture A]{Tal}. However, most of the definitions and proofs are more or less the same. The main difference is that for groups of type $\Phi$, the model structure turns out to be combinatorial and symmetric monoidal. We are not aware if the same is known in the setting of \cite{Ben97}. 

This result extends  the well known case for finite groups where the model structure is given as follows: the fibrations are the surjective homomorphisms, the cofibrations are the injective homomorphisms, and the weak equivalences are stable isomorphisms, that is, isomorphisms in $\underline{\Mod}(kG)$. We will use this model structure on $\Mod(kG)$ to define the stable module $\infty$-category $\StMod(kG)$.

\begin{definition}
A $kG$--module $M$ is said to be \textrm{cofibrant} if $B\otimes_k M$ is a projective module, where $B$ is the set of functions from $G$ to $k$ which take only finitely many different values in $k$, with the $kG$--structure given by pointwise scalar multiplication and addition, and $G$--action given by $g(\varphi)(h)=\varphi(g^{-1}h)$.  
\end{definition}

\begin{lemma}\label{finite pd is projective}
    Let $M$ be a cofibrant $kG$--module. If $M$ has finite projective dimension, then $M$ is projective.
\end{lemma}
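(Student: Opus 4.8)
The plan is to reduce the statement to the single principle that a module of finite projective dimension is a retract of its tensor product with $B$, after which cofibrancy finishes the argument immediately. Concretely, I would first record the two elementary facts about $B$ that drive everything: tensoring over $k$ is exact (since $k$ is a field), and $B\otimes_k P$ is projective whenever $P$ is projective. The latter follows from the untwisting isomorphism $B\otimes_k kG\cong \underline{B}\otimes_k kG$ (where $\underline{B}$ carries the trivial action), which exhibits $B\otimes_k kG$ as a free module; a projective $P$ is a retract of a free module, so $B\otimes_k P$ is a retract of a free module and hence projective.

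The core step is to show that if $\textrm{projdim}_{kG}M<\infty$, then the unit map $\iota_M\colon M\to B\otimes_k M$ induced by $k\to B$ is a split monomorphism, so that $M$ is a direct summand of $B\otimes_k M$. This is the natural generalisation to groups of type $\Phi$ of the classical fact for finite groups that $M$ is projective if and only if the counit $\Ind_1^G\Res_1^G M=kG\otimes_k M\to M$ splits; here $B$ plays the role that the regular representation plays in the finite case. I would establish the splitting by identifying the obstruction to retracting $\iota_M$ with a complete-cohomology class and invoking the characterisation recorded above, namely that $\widehat{\Ext}^0_{kG}(M,M)=0$ exactly when $M$ has finite projective dimension; the vanishing of this group is precisely what should produce the retraction. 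Granting the splitting, the lemma is immediate: since $M$ is cofibrant, $B\otimes_k M$ is projective by definition, and a retract of a projective module is projective, so $M$ is projective.

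The main obstacle is exactly this middle step — producing the retraction of $\iota_M$ from the hypothesis of finite projective dimension. If a direct construction of the splitting proves awkward, I would instead argue by dimension shifting. Tensoring a short exact sequence $0\to K\to P\to M\to 0$ with $P$ projective and $M$ cofibrant by $B$ yields, using the two facts above together with the projectivity of $B\otimes_k M$, a short exact sequence $0\to B\otimes_k K\to B\otimes_k P\to B\otimes_k M\to 0$ in which the last term is projective; hence it splits, $B\otimes_k K$ is a summand of the projective $B\otimes_k P$, and therefore $K$ is again cofibrant. Inducting on $\textrm{projdim}_{kG}M$ then collapses the problem to the case $\textrm{projdim}_{kG}M\le 1$, and the remaining content — showing that a cofibrant module of projective dimension at most one is projective — is where the homological properties of $B$ must ultimately be invoked, either through the retract statement above or, equivalently, by recognising a cofibrant module of finite projective dimension as a Gorenstein projective module and applying the standard fact that a Gorenstein projective module of finite projective dimension is projective.
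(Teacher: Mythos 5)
Your central step is false as stated, and the proposed justification for it cannot be repaired without importing exactly the content you are trying to avoid. You claim that finite projective dimension of $M$ \emph{alone} forces the unit $\iota_M\colon M\to B\otimes M$ to split, with cofibrancy entering only afterwards. Take $G=\mathbb{Z}$ (a group of type $\Phi$, having finite virtual cohomological dimension) and $M=k$, which has projective dimension $1$ over $k\mathbb{Z}$. A $k\mathbb{Z}$-linear retraction $s\colon B\to k$ of the unit would give, via $m(A)=s(1_A)$ for $A\subseteq\mathbb{Z}$, a translation-invariant, finitely additive, $k$-valued function on subsets of $\mathbb{Z}$ with $m(\mathbb{Z})=1$; partitioning $\mathbb{Z}$ into the $p$ cosets of $p\mathbb{Z}$ then yields $1=p\cdot m(p\mathbb{Z})=0$ in characteristic $p$, a contradiction. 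So the splitting genuinely requires cofibrancy of $M$, not just finite projective dimension. The underlying methodological error is this: the obstruction to splitting $0\to M\to B\otimes M\to \Coker(i)\otimes M\to 0$ is an honest class in $\Ext^1_{kG}(\Coker(i)\otimes M,M)$, whereas $\widehat{\Ext}^0_{kG}(M,M)=0$ only says $M$ is trivial in the stable category, and the comparison maps from genuine $\Ext$-groups (or from $\underline{\Hom}$) to complete cohomology are far from injective. Indeed, over $k\mathbb{Z}$ one has $\widehat{\Ext}^0_{k\mathbb{Z}}(k,k)=0$ while $\underline{\Hom}_{k\mathbb{Z}}(k,k)\neq 0$ (the identity of $k$ does not factor through a projective, as $k$ is not projective), and the extension $0\to k\to B\to B/k\to 0$ is non-split even though all complete cohomology groups involving $k$ vanish. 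Converting ``stably zero'' into ``honestly split'' is precisely where cofibrancy must be used, through the isomorphism $\underline{\Hom}_{kG}(M,N)\cong\widehat{\Ext}^0_{kG}(M,N)$ for cofibrant modules (Lemma \ref{8.5 Benson}); that isomorphism is itself a nontrivial statement requiring the $\Omega^{-1}$-machinery, so you cannot treat the identification as automatic.

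Your fallback relocates, but does not close, the gap: the dimension-shifting reduction (syzygies of cofibrant modules are cofibrant) is correct, but the base case carries the entire content of the lemma, and for it you offer either the flawed retract argument again or the citation ``cofibrant implies Gorenstein projective''. The latter is a substantial theorem for infinite groups --- the paper quotes the identification of cofibrant modules with Gorenstein projectives from \cite{DT10} and \cite{BDT09} only later and for other purposes --- and if you allow yourself to cite it, the whole lemma is a one-line consequence with no reduction needed; but this is uncomfortably close to circular, since establishing total acyclicity of the complexes witnessing Gorenstein projectivity of cofibrant modules rests on statements of exactly the present kind. By contrast, the paper's proof is elementary and self-contained: it forms the cosyzygies $\Omega^{-n}M=\Coker(i)^{\otimes n}\otimes M$, shows they are cofibrant by using the multiplication $\mu\colon B\otimes B\to B$ to split the relevant sequences, observes that tensoring over the field $k$ cannot raise projective dimension (so $\mathrm{projdim}_{kG}\Omega^{-r}M\le r$ when $\mathrm{projdim}_{kG}M\le r$), and then applies the extended Schanuel lemma to the two length-$r$ resolutions of $\Omega^{-r}M$ --- one by the projective modules $B\otimes\Omega^{-j}M$ with kernel $M$, the other a projective resolution with kernel $\Omega^{r}(\Omega^{-r}M)$ --- to exhibit $M$, up to projective summands, as the $r$-th syzygy of a module of projective dimension at most $r$, hence projective. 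If you want to salvage your own line of argument, prove Lemma \ref{8.5 Benson} first (its proof does not rely on the present lemma) and then conclude: for cofibrant $M$ of finite projective dimension, $\underline{\Hom}_{kG}(M,M)\cong\widehat{\Ext}^0_{kG}(M,M)=0$, so $\id_M$ factors through a projective and $M$ is a retract of a projective.
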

 \begin{proof}
     Let $\mu\colon B\otimes B\to B$ denote the pointwise multiplication map, and let $i\colon k\to B$ denote the inclusion of the constant functions. Let $\Omega^{-1} M $ denote $\Coker(i)\otimes M$, that is, the cokernel of the injective map $$M\xrightarrow{\cong} k\otimes M \xrightarrow{i\otimes 1} B\otimes M$$ Since $M$ is cofibrant, by Schanuel's lemma we have $$M\oplus P\cong \Omega \Omega^{-1} M \oplus Q$$ for some projective modules $P,Q$. Note that the exact sequence obtained by tensoring with $B$  $$0\to B\otimes M\to B\otimes B\otimes M \to B\otimes \Omega^{-1}M \to 0$$ has a splitting given by $\mu\otimes 1_M$, hence $\Omega^{-1}M$ is cofibrant. We define inductively  $\Omega^{-n}M$ by $ \Omega^{-1}\Omega^{-n+1}M$, for $n\geq2$. In particular, $\Omega^{-n}M$ is cofibrant for all $n\geq1$.  

     Suppose that $M$ has finite projective dimension at most $r>0$. Then $\Omega^r M$ is projective. On the other hand, note that $\Omega^{-r}M$ has projective dimension at most $r$. Consider a projective resolution of $\Omega^{-r}M$  $$0\to \Omega^{r}(\Omega^{-r}M) \to P_{r-1}\to...\to P_0\to \Omega^{-r}M$$ By definition of $\Omega^{-r}M$, we can construct an exact sequence $$0\to M \to Q_{r-1}\to...\to Q_0\to \Omega^{-r}M$$ with $Q_i$ projective, for $i\in\{0,...,r-1\}$. By the extended version of Schanuel's Lemma we have that $\Omega^{r}\Omega^{-r}M\simeq M$ up to projectives, thus $M$ is projective as well. 
 \end{proof}

\begin{lemma}\label{8.5 Benson}
Let $G$ be a group of type $\Phi$. Let $M$, $N$ be cofibrant  $kG$--modules, then the natural map $$\underline{\Hom}_{kG}(M,N)\to \widehat{\Ext}^0_{kG}(M,N)$$ is an isomorphism. 
\end{lemma}

\begin{proof}
Note that for any cofibrant module $M$, we have isomorphisms $$M\simeq \Omega\Omega^{-1}M\simeq \Omega^{-1}\Omega M$$ up to projectives
   (with the same notation as in the proof of Lemma \ref{finite pd is projective}). Hence the result follows.  
\end{proof}

The following definition corresponds to \cite[Definition 10.1]{Ben98II}. 

\begin{definition}\label{model structure}
Let $G$ be a group of type $\Phi$, and $f\colon M\to N$ a homomorphism of $kG$--modules. We say that $f$ is: 
\begin{itemize}
    \item[(i)] a fibration if it is surjective,
    \item[(ii)] a cofibration if it is injective with cofibrant cokernel,  
    \item[(iii)] a weak equivalence if it is a stable isomorphism, that is, if it is an isomorphism in $\underline{\StMod}(kG)$. 
\end{itemize}
\end{definition}

In particular, if $G$ is finite, then any $kG$--module is cofibrant. Moreover, by Lemma \ref{8.5 Benson} we have that $\widehat{\Ext}^0_{kG}(M,N)$ is just $\underline{\Hom}_{kG}(M,N)$. Hence Definition \ref{model structure} coincides with the model structure on $\Mod(kG)$ given in \cite[Section 2.2]{Hov}.

\begin{lemma}\label{characterization of trivial fibrations}
The following holds for a map $f$ of $kG$--modules.    
\begin{itemize}
    \item[(i)] $f$ is a trivial cofibration if and only if $f$ is injective with projective cokernel.
    \item[(ii)] $f$ is a trivial fibration if and only if $f$ is surjective and the kernel has finite projective dimension.
\end{itemize}
\end{lemma}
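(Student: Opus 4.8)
The plan is to derive both equivalences from the single criterion used in the proof of the Proposition on invertible modules: a map $f$ is a weak equivalence (an isomorphism in $\underline{\StMod}(kG)$) if and only if its mapping cone is trivial, i.e.\ has finite projective dimension. For a map fitting into a short exact sequence the cone is read off from the remaining term, so in each case the weak equivalence condition turns into a finite-projective-dimension condition on a single module. I would treat (i) and (ii) in parallel, the only asymmetry being that cofibrations carry a cofibrancy condition while fibrations do not.

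For (i), first I would unwind Definition \ref{model structure}: a trivial cofibration is an injective map $f\colon M\to N$ whose cokernel $C=\Coker(f)$ is cofibrant and which is moreover a weak equivalence. The short exact sequence $0\to M\xrightarrow{f} N\to C\to 0$ exhibits $C$ as the cone $C(f)$, so $f$ is a weak equivalence exactly when $C$ has finite projective dimension; since $C$ is cofibrant, Lemma \ref{finite pd is projective} upgrades this to $C$ being projective, giving the forward implication. For the converse I would first record that every projective module is cofibrant: tensoring with $B$ commutes with arbitrary direct sums, and the untwisting isomorphism shows $B\otimes kG$ is free, so for projective $P$ the module $B\otimes P$ is a retract of a free module, hence projective. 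Consequently an injective map with projective cokernel is automatically a cofibration, and its cokernel, being projective, is trivial in $\underline{\StMod}(kG)$, so $f$ is a weak equivalence; thus $f$ is a trivial cofibration.

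For (ii) the same argument applies with no cofibrancy to track. A trivial fibration is a surjection $f\colon M\to N$ that is a weak equivalence; writing $K=\Ker(f)$, the sequence $0\to K\to M\xrightarrow{f} N\to 0$ identifies the cone of $f$ with $\Omega^{-1}K$. Because $\Omega^{-1}$ is an autoequivalence of $\underline{\StMod}(kG)$, triviality of the cone is equivalent to triviality of $K$, that is, to $K$ having finite projective dimension, and both directions follow at once.

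The main obstacle, to the extent there is one, is confined to part (i) and is the interplay between cofibrancy and finite projective dimension: the forward direction rests entirely on Lemma \ref{finite pd is projective}, and the converse rests on the observation that projectives are cofibrant. Part (ii) needs none of this, since fibrations impose no cofibrancy condition, and its proof is therefore purely formal once the cone of a surjection is identified.
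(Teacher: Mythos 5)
Your proof is correct and takes essentially the same approach as the paper: both use the short exact sequence as a distinguished triangle, invoke Lemma \ref{finite pd is projective} to upgrade the cofibrant cokernel from finite projective dimension to projective, and note that projectives are cofibrant for the converse. The paper dismisses (ii) with ``it follows in a similar fashion,'' and your identification of the cone of a surjection with $\Omega^{-1}$ of the kernel is exactly the right way to fill in that omission.
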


%%%
\begin{proof}
$(i)$ Assume that $f$ is injective. Consider the exact sequence $$0\to M\xrightarrow{f} N \to L \to 0.$$ It defines a distinguished triangle in the stable module category.  If $f$ is a trivial cofibration, then the cofibrant module $L$ is trivial in the stable module category, hence it has finite projective dimension. Thus $L$ is projective by Lemma \ref{finite pd is projective}. On the other hand, if $f$ has projective cokernel, then it is clear that $f$ induces a stable isomorphism. Moreover, any projective is cofibrant, hence $f$ is a trivial cofibration.  $(ii)$ It follows in a similar fashion.   
\end{proof}

\begin{definition}\label{generation cofibrations}
Let $G$ be a group of type $\Phi$. Let $\mathcal{J}$ be the set consisting of the inclusion $0\to kG$, and let $\mathcal{I}$ be the set containing all the induced maps $I\ind_F^G \to kG$, where $I\to kF$ is an inclusion of a left ideal $I$ of $kF$, and $F$ is a finite subgroup of $G$. 
\end{definition}

\begin{proposition}
The class $\mathcal{J}\textrm{-inj}$ is given by the class of fibrations. 
\end{proposition}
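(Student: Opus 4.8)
The plan is to unwind the definition of $\mathcal{J}\textrm{-inj}$ and match it directly with surjectivity. Recall that $\mathcal{J}\textrm{-inj}$ consists of those maps having the right lifting property against the single generating map $0\to kG$, while by Definition \ref{model structure} the fibrations are exactly the surjections. The only input I need is that $kG$ is free of rank one as a $kG$--module, so that evaluation at the unit gives a natural isomorphism $\Hom_{kG}(kG,X)\xrightarrow{\cong} X$, $\varphi\mapsto\varphi(1)$.

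First I would describe a lifting square against $0\to kG$. Since the upper-left corner is the zero object, the top map $0\to M$ and the commutativity of the square carry no information: any homomorphism $g\colon kG\to N$ determines a square, and conversely. Under the identification above, such a $g$ is the same datum as the element $n:=g(1)\in N$. A lift in this square is a homomorphism $h\colon kG\to M$ with $f\circ h=g$; again this is the datum of an element $m:=h(1)\in M$, and the condition $f\circ h=g$ becomes, after evaluating at the unit and using $kG$--linearity, simply $f(m)=n$.

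Then I would read off both implications. If $f$ is surjective, every $n\in N$ has a preimage $m$, so every square admits a lift and $f\in\mathcal{J}\textrm{-inj}$. Conversely, if $f$ has the right lifting property against $0\to kG$, then taking $g$ to send the unit to an arbitrary $n\in N$ produces a lift exhibiting $n$ as $f(m)$, so $f$ is surjective. Hence $\mathcal{J}\textrm{-inj}$ is precisely the class of fibrations.

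There is no serious obstacle here: the statement is a direct consequence of the representability of the forgetful functor by the free module $kG$, and neither cofibrancy nor the type $\Phi$ hypothesis enters. The only point to keep straight is that a square against a map out of the zero object imposes no compatibility beyond $f\circ h=g$, so that the lifting problem collapses cleanly to solving $f(m)=n$ for each $n\in N$.
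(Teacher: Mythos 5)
Your proof is correct and follows essentially the same route as the paper: both directions reduce the lifting problem against $0\to kG$ to choosing a preimage of an element of $N$, using that $kG$ is free (the paper phrases the converse direction via projectivity of $kG$, which is the same point). Your version merely makes the identification $\Hom_{kG}(kG,X)\cong X$ explicit.
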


\begin{proof}
Let $f\colon M\to N$ be a map in $\Mod(kG)$. Suppose that $f$ has the right lifting property with respect to the inclusion $0\to kG$.
For each $n\in N$, there exists an homomorphism $\alpha_n\colon kG\to N$ whose image contains $n$. Thus we have a commutative diagram 

\centerline{
\xymatrix{ 0 \ar[r] \ar[d] & M \ar[d]^f \\ kG \ar[r]^{\alpha_n} & N
}}
\noindent  and let $\gamma\colon kG\to M$ be a filling. It follows that the image of $f\circ \gamma$ contains $n$, hence $f$ is surjective. Conversely, if $f$ is surjective, then the result follows since $kG$ is projective.
\end{proof}

\begin{proposition}
The class $\mathcal{I}\textrm{-inj}$ agrees with the class of trivial fibrations. 
\end{proposition}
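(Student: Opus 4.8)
The plan is to match the right lifting property against $\mathcal{I}$ with the description of trivial fibrations from the previous lemma, namely that $f\colon M\to N$ is a trivial fibration if and only if it is surjective with $\Ker f$ of finite projective dimension. The surjectivity half is essentially free: taking $F$ to be the trivial subgroup and $I=0$ shows that the map $0\to kG$ itself belongs to $\mathcal{I}$, so $\mathcal{I}\textrm{-inj}\subseteq\mathcal{J}\textrm{-inj}$, and by the previous proposition every map in $\mathcal{I}\textrm{-inj}$ is surjective. Thus the real content is the equivalence, for a surjection $f$ with kernel $K=\Ker f$, between the statement $f\in\mathcal{I}\textrm{-inj}$ and the statement that $K$ has finite projective dimension over $kG$.

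The technical heart is to translate lifting problems against $I\ind_F^G\to kG$ into extension problems over the finite group algebra $kF$. Writing $I\ind_F^G = kG\otimes_{kF} I$ and $kG = kF\ind_F^G$, the induction--restriction adjunction supplies natural isomorphisms $\Hom_{kG}(I\ind_F^G,-)\cong\Hom_{kF}(I,(-)\restr_F)$ and $\Hom_{kG}(kG,-)\cong(-)\restr_F$, under which the generating map $I\ind_F^G\to kG$ corresponds to the inclusion $I\hookrightarrow kF$. I would then analyze a lifting square with $I\ind_F^G\to kG$ on the left and $f$ on the right. For the implication $\Leftarrow$, since $kG$ is free one first lifts the bottom map $kG\to N$ through the surjection $f$; the difference with the prescribed top map is killed by $f$, hence lands in $K$, so producing the diagonal filler amounts to extending a $kF$-map $I\to K\restr_F$ along $I\hookrightarrow kF$. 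For the implication $\Rightarrow$, one runs this in reverse: given a $kF$-map $\psi\colon I\to K\restr_F$, build a lifting square whose bottom map is $0\colon kG\to N$ and whose top map is the adjoint of $I\xrightarrow{\psi} K\restr_F\hookrightarrow M\restr_F$; the square commutes because the top map factors through $K=\Ker f$, and any filler again factors through $K$ and is exactly the desired extension $kF\to K\restr_F$ of $\psi$.

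By Baer's criterion, the existence of all such extensions for every left ideal $I$ is precisely the injectivity of $K\restr_F$ as a $kF$-module. Since $F$ is finite, $kF$ is quasi-Frobenius, so injective and projective $kF$-modules coincide for arbitrary (not necessarily finitely generated) modules, and finite projective dimension over $kF$ forces projective dimension $0$. Finally I would pass between $kG$ and $kF$: restriction along $F\leq G$ sends projectives to projectives, as $kG$ is free over $kF$, so $K$ of finite projective dimension over $kG$ restricts to a projective, hence injective, $kF$-module; conversely, $G$ being of type $\Phi$ upgrades the statement ``$K\restr_F$ is projective for every finite subgroup $F$'' to ``$K$ has finite projective dimension over $kG$''. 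Assembling these gives the claimed equality of $\mathcal{I}\textrm{-inj}$ with the class of trivial fibrations. I expect the main obstacle to be the careful bookkeeping of the adjunction, that is, identifying $kG$ with $kF\ind_F^G$ and tracking which lifting square corresponds to which extension problem, together with the point that the quasi-Frobenius identification of injectives with projectives must be invoked for modules that need not be finitely generated.
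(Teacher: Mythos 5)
Your proposal is correct and takes essentially the same route as the paper: both reduce lifting problems against $\mathcal{I}$ to the finite subgroups via the induction--restriction adjunction, and then use the type $\Phi$ hypothesis to pass between ``$\Ker(f)\restr_F$ projective over $kF$ for every finite $F$'' and ``$\Ker(f)$ of finite projective dimension over $kG$.'' The only difference is one of packaging: where the paper cites the known finite-group model structure on $\Mod(kF)$ to identify trivial fibrations over $kF$ with maps having the lifting property against ideal inclusions $I\to kF$, you re-derive that identification by hand via the difference trick, Baer's criterion, and the quasi-Frobenius property of $kF$.
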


\begin{proof}

Let $f\colon M\to N$ be a map in $\Mod(kG)$. Suppose that $f$ is in $\mathcal{I}\textrm{-inj}$. Then $f$ must be a surjection since $0\to kG$ is in $\mathcal{I}$. Since $G$ is of type $\Phi$, it is enough to prove that $\Ker(f)$ is projective on the restriction to any finite subgroup $F$ of $G$. Hence it is enough to prove that $f\restr_F$ is a trivial fibration on $\Mod(kF)$, which is equivalent to proving that $f\restr_F$ has the left lifting property respect to all the inclusions $I\to kF$ where $I$ is a left ideal of $kF$. Suppose that we have a commutative diagram

\centerline{
\xymatrix{ I \ar[r] \ar[d] & M\restr_F \ar[d]^{f\restr_F} \\ kF \ar[r] & N\restr_F
}}

\noindent By the restriction-induction adjunction we have a commutative diagram

\centerline{
\xymatrix{ I\ind^G \ar[r] \ar[d] & M \ar[d]^f \\ kG \ar[r] & N
}}

\noindent  Since $I\ind^{G}\to kG$ is in $\mathcal{I}$, there is a filling for the latter diagram. Hence by the adjunction there is a filling for the former diagram. The converse follows in a similar fashion. 
\end{proof}

\begin{proposition}
The class of cofibrations agrees with the class $\mathcal{I}\textrm{-cof}$. Moreover, a map is in $\mathcal{J}\textrm{-cof}$ if and only if it is a trivial cofibration. 
\end{proposition}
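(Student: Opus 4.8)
The plan is to verify that Definition \ref{model structure} presents a cofibrantly generated model structure with generating cofibrations $\mathcal{I}$ and generating trivial cofibrations $\mathcal{J}$. Since $\Mod(kG)$ is a Grothendieck category, every object is small and the small object argument applies to the sets $\mathcal{I}$ and $\mathcal{J}$; in particular $\mathcal{I}\textrm{-cof}$ (resp. $\mathcal{J}\textrm{-cof}$) is exactly the class of retracts of relative $\mathcal{I}\textrm{-cell}$ (resp. $\mathcal{J}$-cell) complexes, and these are the left classes of the weak factorisation systems determined by $\mathcal{I}\textrm{-inj}$ and $\mathcal{J}\textrm{-inj}$. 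By the two preceding propositions these right classes are the trivial fibrations and the fibrations, so all that remains is to identify the left classes with the cofibrations and the trivial cofibrations.

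For $\mathcal{J}$ I would first note that a pushout of $0\to kG$ along $0\to A$ is the inclusion $A\to A\oplus kG$, so a relative $\mathcal{J}$-cell complex is an inclusion $A\to A\oplus F$ with $F$ free, whose cokernel is free. A retract of such a map is injective with cokernel a retract of $F$, hence projective; conversely any injection with projective cokernel splits and is manifestly a retract of such a cell complex. By the lemma above characterising trivial cofibrations, this identifies $\mathcal{J}\textrm{-cof}$ with the class of trivial cofibrations.

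For the inclusion $\mathcal{I}\textrm{-cof}\subseteq\{\text{cofibrations}\}$ I would compute that the cokernel of a generator $I\ind_F^G\to kG$ is $(kF/I)\ind_F^G$, a module induced from the finite subgroup $F$, which is cofibrant: by the projection formula $B\otimes(W\ind_F^G)\cong(\Res_F B\otimes W)\ind_F^G$, and $\Res_F B$ is a projective $kF$--module, so the induced module is projective after tensoring with $B$. Pushouts of coproducts of maps in $\mathcal{I}$ are then injective (pushouts of monomorphisms in an abelian category) with cofibrant cokernel, and this property passes to retracts since $\Coker$ is functorial and cofibrancy is retract-closed. The delicate point is closure under transfinite composition: if $M=M_0\subseteq M_1\subseteq\cdots$ has cofibrant successive quotients, then tensoring with $B$ equips $B\otimes(\varinjlim M_\lambda/M_0)$ with a continuous filtration whose quotients are projective, so it is projective by Eklof's lemma; thus the composite $M_0\to\varinjlim M_\lambda$ is again injective with cofibrant cokernel. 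Hence every relative $\mathcal{I}$-cell complex, and so every map in $\mathcal{I}\textrm{-cof}$, is a cofibration.

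For the reverse inclusion I would show directly that a cofibration $f\colon M\to N$ with cofibrant cokernel $L$ has the left lifting property against any trivial fibration $p\colon X\to Y$, whose kernel $K$ has finite projective dimension. A lift is the same as a section over $N$ of the pulled-back sequence $0\to K\to N\times_Y X\to N\to 0$ extending the given partial section over $M$, and the obstruction to constructing it lies in $\Ext^1_{kG}(L,K)$. Cofibrancy of $L$ is what makes this vanish: $L$ admits a complete resolution, so the comparison map $\Ext^1_{kG}(L,K)\to\widehat{\Ext}^1_{kG}(L,K)$ is an isomorphism, and the target is zero because $K$, having finite projective dimension, is trivial in $\underline{\StMod}(kG)$ (cf. Lemma \ref{finite pd is projective} and Lemma \ref{8.5 Benson}). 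Therefore $f\in\mathcal{I}\textrm{-cof}$, completing the identification of cofibrations with $\mathcal{I}\textrm{-cof}$. I expect the two homological inputs to be the main obstacles, namely the closure of cofibrant modules under transfinite extension (handled by Eklof's lemma after tensoring with $B$) and the vanishing of $\Ext^1_{kG}(L,K)$, which depends on comparing ordinary Ext with complete cohomology for cofibrant modules.
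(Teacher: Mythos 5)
The paper does not actually prove this proposition: its ``proof'' is the single line ``See Proposition 10.8 in \cite{Ben97}'', deferring entirely to Benson's development of the cofibrant-module framework. Your proposal therefore supplies what the paper omits, and its structure is sound: the small object argument (legitimate, since every module is small) reduces everything to cell complexes and lifting properties; the analysis of $\mathcal{J}$-cell complexes as inclusions $A\to A\oplus F$ with $F$ free correctly identifies $\mathcal{J}\textrm{-cof}$ with injections with projective cokernel, which the paper's earlier lemma identifies with trivial cofibrations; the projection formula $B\otimes(W\ind_F^G)\cong(\Res_F B\otimes W)\ind_F^G$ together with Eklof's lemma correctly shows relative $\mathcal{I}$-cell complexes are cofibrations; and the obstruction-theoretic lifting argument against $\mathcal{I}\textrm{-inj}=\{\text{trivial fibrations}\}$ gives the reverse inclusion. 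Two steps, however, lean on inputs that your stated justifications do not cover and that should be cited properly. First, projectivity (in fact freeness) of $\Res_F B$ for a finite subgroup $F\leq G$ is a genuinely nontrivial lemma of Benson, not a formal consequence of the definition of $B$. Second, the vanishing of $\Ext^1_{kG}(L,K)$ for $L$ cofibrant and $K$ of finite projective dimension does not follow from Lemma \ref{finite pd is projective} or Lemma \ref{8.5 Benson}: the latter concerns two \emph{cofibrant} modules in degree $0$, whereas your $K$ is not cofibrant and the degree is $1$. The correct route is the one the paper itself makes available: for groups of type $\Phi$, cofibrant modules are Gorenstein projective (cited from \cite{DT10}, \cite{BDT09}), so total acyclicity gives $\Ext^{\geq 1}_{kG}(L,Q)=0$ for $Q$ projective, and dimension-shifting along a finite projective resolution of $K$ yields $\Ext^1_{kG}(L,K)=0$; equivalently, the Cornick--Kropholler identification of complete cohomology with the cohomology of a complete resolution makes your comparison-map argument precise. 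With those two repairs your proof is complete; compared with the paper's approach, the citation is economical but opaque, while your version makes explicit exactly where cofibrancy, finite projective dimension, and the type-$\Phi$ hypothesis enter the verification of the model axioms.
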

 
 \begin{proof}
 It follows in the same fashion as Proposition 10.8 in \cite{Ben97}.
 \end{proof}

\begin{theorem}\label{combinatorial structure}
Let $G$ be a group of type $\Phi$. Then $\Mod(kG)$ is a combinatorial model category with respect to the collections of cofibrations, fibrations and weak equivalences described in Definition \ref{model structure}. Moreover, the functor $\Mod(kG)\to \underline{\StMod}(kG)$ induces an equivalence $\Ho\Mod(kG)\to \underline{\StMod}(kG)$. 
\end{theorem}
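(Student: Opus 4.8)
The plan is to apply the standard recognition principle for cofibrantly generated model categories (see \cite{Hov}). The four preceding propositions have already identified the two weak factorization systems we need: $\mathcal{I}\textrm{-inj}$ is the class of trivial fibrations, $\mathcal{J}\textrm{-inj}$ is the class of fibrations, $\mathcal{I}\textrm{-cof}$ is the class of cofibrations, and $\mathcal{J}\textrm{-cof}$ is the class of trivial cofibrations. Thus the sets $\mathcal{I}$ and $\mathcal{J}$ of Definition \ref{generation cofibrations} are exactly the candidates for generating cofibrations and generating trivial cofibrations, and it remains to assemble these into a model structure and to check combinatoriality.

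First I would record the purely formal inputs. The category $\Mod(kG)$ is a Grothendieck abelian category, hence locally presentable; in particular every object is small, so the domains of the maps in $\mathcal{I}$ and $\mathcal{J}$ (which are the modules $I\ind_F^G$, $kG$ and $0$) are small relative to the whole category, and Quillen's small object argument applies to both sets. This simultaneously produces the two factorizations — every map factors as an $\mathcal{I}\textrm{-cof}$ map followed by an $\mathcal{I}\textrm{-inj}$ map, that is a cofibration followed by a trivial fibration, and likewise as a $\mathcal{J}\textrm{-cof}$ map followed by a $\mathcal{J}\textrm{-inj}$ map, that is a trivial cofibration followed by a fibration — and it exhibits $\mathcal{I}$ and $\mathcal{J}$ as small generating sets, which is the combinatoriality claim once the model structure is in place.

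Next I would verify the remaining axioms. The weak equivalences are by definition the isomorphisms of $\underline{\StMod}(kG)$; since isomorphisms in any category satisfy the two-out-of-three property and are closed under retracts, so is the class of weak equivalences. Cofibrations and fibrations, being the classes $\mathcal{I}\textrm{-cof}$ and $\mathcal{J}\textrm{-inj}$ defined by a lifting property, are closed under retracts. The two lifting axioms are immediate: cofibrations lift against trivial fibrations because cofibrations $=\mathcal{I}\textrm{-cof}$ and trivial fibrations $=\mathcal{I}\textrm{-inj}$, and trivial cofibrations lift against fibrations because trivial cofibrations $=\mathcal{J}\textrm{-cof}$ and fibrations $=\mathcal{J}\textrm{-inj}$. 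The only compatibility that genuinely uses the explicit computations of the two lemmas preceding Definition \ref{generation cofibrations} is the consistency between the two factorization systems, namely that trivial fibrations are exactly the fibrations which are weak equivalences and trivial cofibrations are exactly the cofibrations which are weak equivalences; these follow from the descriptions \emph{surjective with kernel of finite projective dimension} and \emph{injective with projective cokernel}, together with Lemma \ref{finite pd is projective}. Assembling these yields the model structure.

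Finally, for the identification of the homotopy category I would argue as follows. Every module is fibrant, because fibrations are surjections and $M\to 0$ is surjective; and the cofibrant objects are exactly the cofibrant modules (those $M$ with $B\otimes_k M$ projective), since $0\to M$ is then a cofibration. Consequently $\Ho\Mod(kG)$ is equivalent to the category whose objects are cofibrant modules and whose morphisms are left-homotopy classes of module maps. I would then check that this left-homotopy relation coincides with the stable relation of differing by a map that factors through a projective, so that between cofibrant modules $M,N$ the morphisms are $\underline{\Hom}_{kG}(M,N)$. By Lemma \ref{8.5 Benson} this group equals $\widehat{\Ext}^0_{kG}(M,N)=\Hom_{\underline{\StMod}(kG)}(M,N)$, giving a fully faithful functor $\Ho\Mod(kG)\to\underline{\StMod}(kG)$; it is essentially surjective because every module is weakly equivalent, hence isomorphic in $\underline{\StMod}(kG)$, to its cofibrant replacement. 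The step I expect to require the most care — although the preceding propositions carry most of the technical weight — is precisely verifying that the model-categorical left-homotopy relation agrees on the nose with stabilization, since this is what makes Lemma \ref{8.5 Benson} applicable; everything else is either formal or reduces to local presentability of $\Mod(kG)$ together with the factorization-system computations already performed.
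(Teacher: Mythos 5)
Your proposal is correct and takes essentially the same route as the paper: both rest on smallness of all $kG$--modules and the recognition principle for cofibrantly generated model categories (the paper cites Hovey's Theorem 2.1.19 directly), with the preceding propositions supplying the identifications of $\mathcal{I}\textrm{-inj}$, $\mathcal{J}\textrm{-inj}$, $\mathcal{I}\textrm{-cof}$ and $\mathcal{J}\textrm{-cof}$. The only difference is one of emphasis: the paper leaves the equivalence $\Ho\Mod(kG)\to\underline{\StMod}(kG)$ implicit (it resurfaces in the subsequent corollary on Gorenstein projective modules), whereas you spell it out via bifibrant objects, the identification of homotopy with factoring through projectives, and Lemma \ref{8.5 Benson} --- a valid and compatible elaboration.
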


\begin{proof}
The category of $kG$--modules is locally presentable since it can be identified with the functor category  $\mathrm{Fun}(G,\Mod(k))$, and $\Mod(k)$ is locally presentable, where $G$ is viewed as a category with just one object.  Note that every $kG$--module is small (see  \cite[Example 2.1.6]{Hov}). Then the domains of  $\mathcal{I}$ (resp. $\mathcal{J}$) are small relative to $\mathcal{I}\textrm{-cell}$ (resp. $\mathcal{J}\textrm{-cell}$). Let $\mathcal{W}$ denote the class of weak equivalences. We had proved that $\mathcal{J}\textrm{-cell}\subseteq \mathcal{W}\cap\mathcal{J}\textrm{-cof}$, and $\mathcal{I}\textrm{-inj}\subseteq \mathcal{W}\cap\mathcal{J}\textrm{-inf}$, and $\mathcal{W}\cap\mathcal{I}\textrm{-cof}\subseteq \mathcal{J}\textrm{-cof}$. Hence the result follows from  \cite[Theorem 2.1.19]{Hov}.
\end{proof}
 
For groups of type $\Phi$, the class of cofibrant modules coincides with the class of Gorenstein projective modules (see \cite[Conjecture A]{DT10} and \cite{BDT09}), and by the definition of fibration, any module is fibrant. Thus the full subcategory of $\underline{\StMod}(kG)$ on bifibrant\footnote{An object that is both fibrant and cofibrant.} modules agrees with $\underline{\mathrm{GP}}(kG)$. Then by general theory of model categories we have the following result.

\begin{corollary}
The map $\underline{\Hom}(M,N)\to \Hom_{\underline{\StMod}(kG)}(M,N)$ is surjective if $M$ is Gorenstein projective. Moreover, the inclusion of $\underline{\textrm{GP}}(kG)$ into $\underline{\StMod}(kG)$ is an equivalence.  
\end{corollary}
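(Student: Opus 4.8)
The plan is to deduce this corollary directly from the general machinery of Quillen model categories, now that Theorem~\ref{combinatorial structure} has equipped $\Mod(kG)$ with a model structure whose homotopy category is $\underline{\StMod}(kG)$. The two assertions are of a standard type: the first is a statement about morphisms out of a cofibrant object, and the second identifies the full subcategory of bifibrant objects with a known triangulated subcategory. I would structure the proof around the observation made in the paragraph preceding the corollary, namely that the cofibrant objects are exactly the Gorenstein projective modules, every module is fibrant, and hence the bifibrant objects are precisely $\underline{\mathrm{GP}}(kG)$.

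First I would address the surjectivity claim. The point is that morphisms in the homotopy category $\Ho\Mod(kG)\simeq\underline{\StMod}(kG)$ between a cofibrant source and a fibrant target are computed as homotopy classes of genuine module maps. Concretely, for $M$ cofibrant (Gorenstein projective) and $N$ any module (automatically fibrant), the canonical map from $\Hom$ in the model category to $\Hom$ in the homotopy category factors through $\underline{\Hom}_{kG}(M,N)$, and general model-categorical theory (e.g.\ the fact that $\Hom_{\Ho}(M,N)=\pi(M,N)$, the set of homotopy classes of maps, whenever $M$ is cofibrant and $N$ is fibrant) guarantees that every morphism in $\Ho\Mod(kG)$ is represented by an honest map $M\to N$. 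Since $\underline{\Hom}_{kG}(M,N)$ already records maps modulo those factoring through projectives, the induced map $\underline{\Hom}_{kG}(M,N)\to\Hom_{\underline{\StMod}(kG)}(M,N)$ is the passage to homotopy classes and is therefore surjective. I would cite the relevant statement from Hovey for the representability of homotopy-category morphisms by actual maps when the source is cofibrant and the target fibrant.

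Second, for the equivalence of categories, I would invoke the general fact that for any model category $\mathcal{M}$, the inclusion of the full subcategory $\mathcal{M}_{cf}$ of bifibrant objects into $\mathcal{M}$ induces an equivalence $\Ho(\mathcal{M}_{cf})\xrightarrow{\simeq}\Ho(\mathcal{M})$, and moreover on bifibrant objects the homotopy category is obtained simply by quotienting by the homotopy relation. Applying this to $\Mod(kG)$ and using the identification of bifibrant objects with $\underline{\mathrm{GP}}(kG)$ from the preceding paragraph, together with Theorem~\ref{combinatorial structure}, gives the desired equivalence $\underline{\mathrm{GP}}(kG)\xrightarrow{\simeq}\underline{\StMod}(kG)$.

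The main obstacle, such as it is, is bookkeeping rather than mathematics: one must be careful that the \emph{stable} hom-groups $\underline{\Hom}_{kG}(-,-)$ and the homotopy relation of the model structure genuinely coincide on the relevant objects, so that the purely formal model-category output translates into the stated assertions about $\widehat{\Ext}^0_{kG}$ and $\underline{\mathrm{GP}}(kG)$. This is precisely where Lemma~\ref{8.5 Benson} does the work, by identifying $\underline{\Hom}_{kG}(M,N)$ with $\widehat{\Ext}^0_{kG}(M,N)$ on cofibrant modules; with that identification in hand the translation is immediate, and the rest is an appeal to the standard results on homotopy categories of model categories.
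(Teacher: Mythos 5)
Your proposal is correct and takes essentially the same approach as the paper: the paper's proof consists precisely of the observation that the cofibrant objects are the Gorenstein projective modules while every module is fibrant, followed by a one-line appeal to the general theory of model categories (morphisms in $\Ho\Mod(kG)$ out of a cofibrant object are represented by genuine maps, and the bifibrant subcategory computes the homotopy category). You have simply spelled out that appeal in detail, correctly identifying Lemma~\ref{8.5 Benson} as the bridge between the model-categorical homotopy relation and the stable hom-sets $\widehat{\Ext}^0_{kG}$.
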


\begin{proposition}\label{monoidal model category on Mod}
Consider $\Mod(kG)$ endowed with the symmetric monoidal structure given by the tensor product over $k$. Then this monoidal structure and the model structure from Definition \ref{model structure} make $\Mod(kG)$ a symmetric monoidal model category. 
\end{proposition}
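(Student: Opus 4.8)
The plan is to verify the pushout-product axiom for the model structure from Definition \ref{model structure}, since the unit $k$ is already cofibrant (being a summand of $B$) and the tensor product over $k$ is clearly symmetric monoidal on the underlying category $\Mod(kG)$. The main axiom requires that for cofibrations $i\colon A\to B$ and $j\colon C\to D$, the induced pushout-product map
$$i\square j\colon (A\otimes D)\cup_{A\otimes C}(B\otimes C)\to B\otimes D$$
is a cofibration, which is moreover trivial whenever $i$ or $j$ is. Since every object is fibrant, the unit axiom for the monoidal model structure is automatic once the pushout-product axiom holds.

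First I would reduce to checking the axiom on generating (trivial) cofibrations, using that $\square$ commutes with colimits in each variable and that the classes $\mathcal{I}\textrm{-cof}$ and $\mathcal{J}\textrm{-cof}$ of (trivial) cofibrations are generated by $\mathcal{I}$ and $\mathcal{J}$ respectively, as established in the preceding propositions. Thus it suffices to show that for generators $i\in\mathcal{I}$, $j\in\mathcal{I}$ the pushout-product is again a cofibration, and that $i\square j$ is trivial when one of the two lies in $\mathcal{J}$. Concretely, a cofibration is an injection with cofibrant cokernel, so for the non-trivial case I would identify the cokernel of $i\square j$ with $\Coker(i)\otimes\Coker(j)$ and argue that a tensor product over $k$ of cofibrant modules is cofibrant; for the trivial case I would use the characterization from the lemma above that trivial cofibrations are injections with \emph{projective} cokernel, together with the fact that tensoring a projective $kG$--module with any module yields a projective.

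The crux is the stability of cofibrant modules under $\otimes_k$. Recall a module $M$ is cofibrant when $B\otimes M$ is projective, where $B$ is the algebra of finitely-valued functions $G\to k$. The key computation is that $B\otimes(M\otimes N)\cong(B\otimes M)\otimes N$, and since $B\otimes M$ is projective and a projective module tensored with any module over $k$ remains projective (projectives are summands of frees $kG\ind$, and $kG\otimes N\cong kG\otimes \mathrm{Res}\, N$ is a direct sum of copies of $kG$), we conclude $B\otimes(M\otimes N)$ is projective, so $M\otimes N$ is cofibrant. This is exactly the step I expect to be the main obstacle, as one must handle the interaction between the diagonal $G$--action on the tensor product and the flattening isomorphism $kG\otimes V\cong kG\otimes V^{\mathrm{triv}}$ carefully. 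Granting this closure property, the pushout-product computation on cokernels goes through formally, and both the cofibration and trivial-cofibration cases follow, completing the verification that $\Mod(kG)$ is a symmetric monoidal model category.
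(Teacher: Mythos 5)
Your verification of the pushout-product axiom is correct and runs essentially parallel to the paper's: both reduce to the generating sets $\mathcal{I}$, $\mathcal{J}$ via \cite[Corollary 4.2.5]{Hov}, identify the cokernel of $i\square j$ with $\Coker(i)\otimes\Coker(j)$, and use closure of cofibrant modules under $\otimes_k$. Your direct argument for that closure, $B\otimes(M\otimes N)\cong(B\otimes M)\otimes N$ together with the fact that a projective tensored with anything is projective, is sound and if anything more elementary than the paper's phrasing via Gorenstein projectives; likewise your treatment of the mixed case (projective cokernel tensored with anything is projective, so $i\square j$ is an injection with projective cokernel) is a clean substitute for the paper's restriction-to-finite-subgroups argument.

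The genuine gap is the unit axiom. The unit $k$ is \emph{not} cofibrant in this model structure, and it is not a summand of $B$: already for a finite $p$-group, $B\cong kG$, which has no trivial direct summand; and for $G=\mathbb{Z}$ (a group of type $\Phi$), the constant functions form a submodule of $B$ annihilated by $t-1$, so $B$ is not projective over $k[t,t^{-1}]$ and hence $k$ is not cofibrant. Your fallback claim that fibrancy of all objects makes the unit axiom automatic is also not a valid principle: the standard fact is that the unit axiom follows from the pushout-product axiom when the unit is \emph{cofibrant}, whereas fibrancy plays no role in that axiom, which concerns the maps $Qk\otimes X\to k\otimes X$ for cofibrant $X$ and a cofibrant replacement $Qk\to k$ of a non-cofibrant unit. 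So the unit axiom is left unproved in your proposal, and it does require an argument; the paper supplies one. Namely, $-\otimes M$ is exact, so tensoring the trivial fibration $\alpha\colon Qk\to k$ with $M$ gives a surjection with kernel $\Ker(\alpha)\otimes M$; since $\Ker(\alpha)$ has finite projective dimension, its restriction to every finite subgroup is projective, hence $\Ker(\alpha)\otimes M$ is projective on restriction to every finite subgroup, and the type $\Phi$ hypothesis then gives it finite projective dimension, so $Qk\otimes M\to k\otimes M$ is a weak equivalence. With that addition your argument becomes a complete proof.
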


\begin{proof}
Recall that for any $kG$--module $M$, the functor $-\otimes M$ is exact. In particular, let $\alpha\colon Qk \xrightarrow{\sim} k$ be the cofibrant replacement of the unit $k$, then we have that $Qk\otimes M \xrightarrow{\sim} k\otimes M$ is a weak equivalence.  

On the other hand, since cofibrations are injective maps with Gorenstein projective cokernel, and tensoring with a Gorenstein projective is Gorenstein projective, we deduce that if $f$ and $g$ are cofibrations then the pushout $f\Box g$ is a cofibration. Moreover, if $f$ is one of the generating cofibrations and $g$ is one of the generating trivial cofibrations, then $f\Box g$ is a trivial cofibration because it is a trivial cofibration on the restriction to any finite subgroup of $G$. Then the result follows by  \cite[Corollary 4.2.5]{Hov}.
\end{proof}

Recall that any model category has an associated underlying $\infty$-category (see \cite[Def. 1.3.4.15]{Lur17}).

\begin{definition}
Let $G$ be a group of type $\Phi$. We define \textrm{the stable module $\infty$-category} $\StMod(kG)$ as the   underlying $\infty$-category of the model structure on $\Mod(kG)$ from Definition \ref{model structure} (c.f.  \cite[Def. 2.2]{Mat15}). 
\end{definition}

%%%%%%%%%%%%%%%%%%%%%%%%%%%%%%%%%%%%%%%%%%%%%%%%%%%%%%%%%%%%%%%%%%%%%%%%

\begin{proposition}
The stable module $\infty$-category $\StMod(kG)$ inherits the structure of a stable homotopy theory in the language of Mathew, that is, it is a presentable, symmetric monoidal stable $\infty$-category, where the tensor product commutes with colimits in each variable.    
\end{proposition}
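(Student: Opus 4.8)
The plan is to verify the three defining properties of a stable homotopy theory—presentability, stability, and a compatible closed symmetric monoidal structure—by invoking the general machinery relating model categories to their underlying $\infty$-categories, applied to the combinatorial symmetric monoidal model structure established in Theorem \ref{combinatorial structure} and Proposition \ref{monoidal model category on Mod}. First I would recall that the underlying $\infty$-category of any combinatorial model category is presentable (see \cite[Prop. 1.3.4.22]{Lur17}); since $\Mod(kG)$ is combinatorial by Theorem \ref{combinatorial structure}, presentability of $\StMod(kG)$ follows immediately.

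Next I would establish stability. The underlying $\infty$-category is stable precisely when the homotopy category is triangulated with the loop functor an equivalence; concretely, it suffices to check that the suspension functor $\Sigma = \Omega^{-1}$ is an equivalence on the underlying $\infty$-category, or equivalently that $\Ho\Mod(kG) \simeq \underline{\StMod}(kG)$ is triangulated with invertible shift. But this is exactly the content of the triangulated structure on $\underline{\StMod}(kG)$ described after Theorem 2.1 above, where the shift $\Omega^{-1}$ is an autoequivalence and distinguished triangles come from short exact sequences. I would package this by noting that a pointed presentable $\infty$-category is stable iff its homotopy category is triangulated in a way compatible with the loop/suspension adjunction, and cite the identification $\Ho\Mod(kG)\xrightarrow{\simeq}\underline{\StMod}(kG)$ from Theorem \ref{combinatorial structure} together with the invertibility of $\Omega$.

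For the monoidal structure, I would appeal to the fact that a symmetric monoidal model category whose tensor product is a left Quillen bifunctor induces a symmetric monoidal structure on the underlying $\infty$-category (see \cite[Prop. 4.1.7.4]{Lur17} or the localization results of \cite{Lur17}), with the tensor product descending from the point-set model. Proposition \ref{monoidal model category on Mod} provides exactly the hypothesis that $\Mod(kG)$ is a symmetric monoidal model category, so the derived tensor product endows $\StMod(kG)$ with a symmetric monoidal structure. Finally, cocontinuity of the tensor product in each variable follows because $-\otimes M$ is a left Quillen functor for each cofibrant $M$, hence its derived functor is a left adjoint and therefore preserves all colimits; combined with presentability and the adjoint functor theorem, this gives a closed symmetric monoidal structure.

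The main obstacle I anticipate is the careful compatibility between the stable and monoidal structures—specifically, confirming that the tensor product is exact (equivalently, cocontinuous) in each variable at the level of the underlying $\infty$-category rather than merely on the homotopy category. The point-set statement that $-\otimes M$ is exact is already recorded in the proof of Proposition \ref{monoidal model category on Mod}, and the left Quillen property transports this to cocontinuity of the derived functor; so while the verification is mostly a matter of correctly assembling the cited structural results of \cite{Lur17}, the one genuinely substantive input is that the model structure of Definition \ref{model structure} is simultaneously combinatorial, stable (via the triangulated identification), and symmetric monoidal, all of which have been assembled in the preceding results.
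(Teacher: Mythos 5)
Your proposal is correct and follows essentially the same route as the paper: both deduce presentability, the symmetric monoidal structure, and cocontinuity of the tensor product in each variable from the combinatorial symmetric monoidal model structure (Theorem \ref{combinatorial structure} and Proposition \ref{monoidal model category on Mod}) by citing the corresponding structural results in \cite{Lur17}. The only difference is that you additionally spell out the stability verification (that the suspension $\Omega^{-1}$ is an autoequivalence, checked via the identification $\Ho\Mod(kG)\simeq\underline{\StMod}(kG)$ and the invertibility of the shift there), a point the paper's own proof leaves implicit.
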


\begin{proof}
    By Theorem \ref{combinatorial structure} and Proposition \ref{monoidal model category on Mod} we have that $\mathrm{Mod}(kG)$ is a combinatorial symmetric monoidal model category. Hence $\StMod(kG)$ is  presentable and symmetric monoidal by \cite[Proposition 1.3.4.22]{Lur17} and \cite[Example 4.1.7.6]{Lur17}. Moreover, the tensor product on $\StMod(kG)$ commutes with colimits separately in each variable by \cite[Lemma 4.1.8.8]{Lur17}.
\end{proof}

\begin{remark}
 We let $\underline{\Hom}_G(M,N)$ denote the mapping space between objects $M,N$ in $\StMod(kG)$. The homotopy category of $\StMod(kG)$ corresponds to the stable module category $\underline{\StMod}(kG)$ (see Definition \ref{stable module category M-P}), that is, the category whose objects are $kG$--modules and hom-sets are given by $$\pi_0\underline{\Hom}_G(M,N)\cong \widehat{\Ext}^0_{kG}(M.N).$$
\end{remark}

We conclude this section by showing functoriality of the stable module $\infty$-category with respect to morphisms of groups. In fact, we will show that restriction along a morphism of groups is a right and left Quillen functor. As a consequence, we will obtain adjuntions in the $\infty$-categorical setting (e.g., see \cite{Mazel16}). 

\begin{proposition}
 Let $G$ be a group of type $\Phi$ and consider the model structure on the category of modules discussed in this section.  Let $H$ be a subgroup of $G$. Restriction along the inclusion  $i\colon H\to G$ induces a right (resp. left) Quillen functor $\mathrm{Res}^G_H\colon \Mod(kG)\ to \Mod(kH)$ with left (resp. right) adjoint given by the induction functor (resp. the coinduction functor). In particular, given an element $g\in G$, we can restrict along the conjugation map $c_g\colon {^gH}\to H$, so we obtain a right Quillen functor 
$(c_g)^*\colon \Mod(kH)\to\Mod(k \gH)$.
\end{proposition}

\begin{proof}
    It is well known that restriction-induction determines an adjuntion. Hence it is enough to verify that restriction sends (trivial) fibrations to (trivial) fibrations. It is clear that restriction sends epimorphisms to epimorphisms since it is exact. Moreover, a trivial fibration is an epimorphism with kernel having finite projective dimension by Lemma \ref{characterization of trivial fibrations}, but restriction sends modules of finite projective dimension to modules of finite projective dimension. In a similar fashion, the restriction functor is a left Quillen functor. 
\end{proof}

\begin{corollary}
Let $H$ be a subgroup of $G$. The inclusion $i\colon H\to G$ induces a symmetric monoidal functor $$\mathrm{Res}^G_H=i^*\colon \StMod(kG)\to\StMod(kH)$$ with left adjoint $i_!$ known as induction, and right adjoint $i_*$ known as coinduction. In particular, we have that $\mathrm{Res}^G_H$ preserves all (homotopy) limits and colimits. Moreover, given an element $g\in G$, we can restrict along the right conjugation map $c_g\colon {^gH}\to H$, so we obtain a functor 
$$(c_g)^*\colon \StMod(kH)\to\StMod(k \gH).$$     
\end{corollary}

%%%%%%%%%%%%%%%%%%%%%%%%%%%%%%%%%%%%%%%%%%%%%%%%%%%%5

\section{Groups Acting on Trees}\label{section 3}

In this section, we let $G$ be a group of type $\Phi$ acting on a tree $T$. In particular, $G$ corresponds to the fundamental group of a graph of groups $(G(-),\Gamma)$ (see \cite[Definition 3.1]{DD89}). We shall show that the stable module $\infty$-category of $G$ admits a decomposition in terms of the graph $\Gamma$. 

\begin{remark}\label{De grafo a categoria}
    Let $\Gamma$ be a directed graph. We will consider $\Gamma$ as a category, still denoted by $\Gamma$, in the following fashion.  
     \begin{itemize}
        \item The objects are the vertices and edges of the graph $\Gamma$. 
        \item The morphisms are given by the incidence maps and the identities. That is, for an edge $e$ we have morphisms $i(e)\to e$ and $\tau(e)\to e$. 
    \end{itemize}
       The category $\Gamma^\textrm{op}$ associated to a directed graph $\Gamma$ is sometimes referred to as the exit path category of the directed graph.
\end{remark}

For instance, consider an amalgamated product $G=A\ast_C B$ of finite groups. The graph of groups $(G(-),\Gamma)$ can be depicted by $A\xleftarrow[]{i}C \xrightarrow[]{j} B$. The associated graph $\Gamma$ (as a category) corresponds to the barycentric subdivision of a segment. We have a corresponding diagram of shape $\Gamma^\textrm{op}$ in $\textrm{Cat}^\otimes_\infty$ depicted as follows. 

\centerline{\xymatrix{
 & \StMod(kA) \ar[d]^{i^*} \\
 \StMod(kB) \ar[r]^{j^*} & \StMod(kC)}}

\noindent In this case, we will show that $\StMod(kG)$ is the pullback in $\textrm{Cat}^\otimes_\infty$ of the diagram above.
Let $\D$ be the homotopy pullback of the diagram  $\Gamma^\textrm{op}\to\textrm{Cat}^\otimes_\infty$, 
and let $\Psi\colon\StMod(kG)\to \D$ denote the comparison functor. In order to show that $\Psi$ is essentially surjective, consider a $kA$--module $M$ and a $kB$--module $N$ such that $M\simeq_\varphi N$ in $\underline{\StMod}(kC)$. Following \cite[Section 5]{Sym}, we can add projectives to $M$ and $N$ so that $\varphi$ can be a genuine isomorphism of $kC$--modules. Let $C(\varphi)= M$ as a $k$-vector space. Fix $m\in C(\varphi)$. For $a\in A$ define $a\cdot m= am$ and for $b\in B$ define $b\cdot m= \varphi^{-1}(b\varphi(m))$. This action makes $C(\varphi)$ a $kG$--module. Moreover, note that $C(\varphi)|_A= M$ and $C(\varphi)|_B\simeq N$.  Therefore, $\Psi$ is essentially surjective. On the other hand, consider $M,N\in \StMod(kG)$. It is enough to show that $$\pi_\ast\underline{\Hom}_G(M,N)\to \pi_\ast\Hom_\D(\Psi(M),\Psi(N))$$ is an isomorphism. Since $\D$ is the pullback of the diagram $\Gamma^\textrm{op}\to \textrm{Cat}^\otimes_\infty$, we have that the space $\Hom_\D(\Psi(M),\Psi(N))$ corresponds to the homotopy pullback of a diagram of shape $\Gamma^\textrm{op}$. In particular, we have a long exact sequence 
\begin{align*}
...\to \pi_{n}\Hom_\D(\Psi(M),\Psi(N)) \to \pi_n \underline{\Hom}_A(M,N)\times \pi_n  \underline{\Hom}_B(M,N) \to \\ \to \pi_n \underline{\Hom}_C(M,N)\to...
\end{align*}

Moreover, recall that $\pi_\ast\underline{\Hom}_G(M,N)$ is given by complete cohomology. By \cite[Section VII.9]{Bro} we have a similar long exact sequence to compute the homotopy groups $\pi_\ast\underline{\Hom}_G(M,N)$. Then we can compare both long exact sequences and by the 5-lemma, we get the desired isomorphism. Hence $\Psi$ is fully faithful and $\StMod(kG)\simeq \D$. \\

We will extend this idea to exhibit a decomposition of the stable module $\infty$-category for any group of type $\Phi$ acting on a tree. Let $D\colon I^\textrm{op} \to \textrm{Top}$ be a diagram. Recall that there is a spectral sequence for computing the homotopy groups of the homotopy limit of $D$ given by
 $$E^{p,q}_2=H^p(I;\pi_q D)\Rightarrow \pi_{q-p}(\varprojlim D)$$ where $\pi_q D$ denotes the diagram of shape $I$ given by $i\mapsto \pi_q(D_i)$. The differentials have the form $d^r\colon E_r^{p,q}\to E_r^{p+r,q+r-1}$. 
Since this spectral  sequence has certain convergence issues, we have to place some restrictions in order to avoid these inconveniences. For instance, we can impose that  each $D_i$ is path connected with abelian fundamental group for each $i$ to ensure convergence (see \cite{BK72}, \cite{Dug}). On the other hand, if our diagram $D$ takes values in spectra $\mathrm{Sp}$, we can use the analogous spectral sequence given in \cite[Section 1.2.2]{Lur17} which does not have these convergence issues.

\begin{remark}\label{reduced complex}
    Let $\Gamma$ be a directed graph. Let $V\Gamma$ denote the set of vertices and $E\Gamma$ denote the set of edges of $\Gamma$. Let $D\colon \Gamma\to \textrm{Ab}$ be a diagram ($\Gamma$  viewed as a category, see Remark \ref{De grafo a categoria}). Let $\overline{C}^*(\Gamma;D)$ be the following 2-term complex of abelian groups  
$$ \prod_{v\in V\Gamma} D(v) \xrightarrow{d}  \prod_{e\in E\Gamma} D(e) $$
where the differential $d$ is given as follows. For $U$ in the domain, 
$$d(U)(e)= D(v\to e)U(v)-D(w\to e)(U(w))$$ where $e$  is an edge of the directed graph with initial vertex $v$ and terminal vertex $w$, and $v\to e$ and $w\to e$ are given by the incidence functions of the graph. Note that $\overline{C}^*(\Gamma;D)$ is quasi-isomorphic to the cochain complex $C^*(\Gamma;D)$ of $\Gamma$ with coefficients in $D$. In particular, we can use $\overline{C}^*(\Gamma;D)$ to compute $H^*(\Gamma;D)$.

\end{remark}

\begin{theorem}\label{graphs of groups}
    Let $G$ be a group of type $\Phi$ acting on a tree $T$. Consider the associated graph of groups $\Gamma \to \textrm{Gps}$. Then we have an equivalence of symmetric monoidal $\infty$-categories 
$$\StMod(kG)\xrightarrow[]{\simeq} \varprojlim_{\sigma\in \Gamma^\textrm{op}} \StMod(kG_\sigma). $$ 
    
\end{theorem}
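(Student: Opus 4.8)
The plan is to promote the pullback argument given above for $A\ast_C B$ to a homotopy limit over the whole indexing category $\Gamma^\textrm{op}$. Restriction along the inclusions $G_\sigma\hookrightarrow G$ assembles, via the conjugation functors $(c_g)^*$ that record the incidence data of the graph of groups only up to conjugacy, into a map of diagrams and hence a symmetric monoidal comparison functor $F\colon \StMod(kG)\to \varprojlim_{\sigma\in\Gamma^\textrm{op}}\StMod(kG_\sigma)$. Because each $\Res^G_{G_\sigma}$ is symmetric monoidal and limits in $\textrm{Cat}^\otimes_\infty$ are computed on underlying $\infty$-categories, it suffices to prove that $F$ is an equivalence of $\infty$-categories, and I would do this by checking essential surjectivity and full faithfulness separately, exactly as in the two-vertex case.

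For essential surjectivity I would generalize the construction of the module $C(\varphi)$. An object of the limit consists of a coherent family of modules $M_\sigma\in\StMod(kG_\sigma)$ together with, for every edge $e$, an equivalence in $\StMod(kG_e)$ between the restrictions of $M_{i(e)}$ and $M_{\tau(e)}$. Following \cite[Section 5]{Sym}, I would first add projectives so that each such gluing equivalence is realized by a genuine isomorphism of $kG_e$-modules, and then use the action of $G$ on the tree $T$ (with quotient $\Gamma$) together with a fundamental domain to splice the local modules into a single $kG$-module, extending the pointwise formulas $a\cdot m=am$ and $b\cdot m=\varphi^{-1}(b\varphi(m))$ from two vertices to the whole graph. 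By construction its restriction to each $G_\sigma$ recovers $M_\sigma$ up to stable isomorphism, so $F$ hits every object.

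For full faithfulness I would compare homotopy groups. The mapping space on the target is the homotopy limit $\varprojlim_{\sigma\in\Gamma^\textrm{op}}\underline{\Hom}_{G_\sigma}(M\restr_{G_\sigma}, N\restr_{G_\sigma})$, whose homotopy groups are computed by the spectral sequence $E_2^{p,q}=H^p(\Gamma;\pi_q)$; since $\Gamma$ has cohomological dimension at most one this degenerates, and via the two-term complex $\overline{C}^*(\Gamma;\pi_q)$ of the preceding remark it produces a Mayer--Vietoris long exact sequence relating $\pi_*$ of the limit to the products over the vertices and edges of the groups $\pi_*\underline{\Hom}_{G_\sigma}$. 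On the source, $\pi_*\underline{\Hom}_G(M,N)$ is the complete cohomology $\widehat{\Ext}^*_{kG}(M,N)$, which carries the analogous Mayer--Vietoris sequence for a group acting on a tree coming from \cite[Section VII.9]{Bro}. The restriction maps give a morphism between these two long exact sequences that is the identity on the vertex and edge terms, so the five-lemma forces $F$ to induce an isomorphism on all homotopy groups of mapping spaces.

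The step I expect to be the main obstacle is essential surjectivity in full generality: rigidifying the $\infty$-categorical coherence data to honest isomorphisms of $kG_e$-modules and gluing them over a possibly infinite graph with arbitrary, not necessarily finite, isotropy groups requires care to obtain a well-defined global $G$-action and to keep the construction compatible with the tensor product. A secondary point to nail down is the Mayer--Vietoris sequence for complete cohomology $\widehat{\Ext}^*_{kG}$ in the full generality of groups of type $\Phi$, and verifying that the mapping spaces meet the path-connectedness and abelian-$\pi_1$ hypotheses that guarantee convergence of the spectral sequence.
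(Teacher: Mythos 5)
Your full-faithfulness argument is essentially the paper's own: the paper also fixes $M,N$, forms the diagram $\sigma\mapsto\underline{\Hom}_{G_\sigma}(M_\sigma,N_\sigma)$, runs the Bousfield--Kan spectral sequence $E_2^{p,q}=H^p(\Gamma;\pi_qD)$ (which degenerates since $\Gamma$ is one-dimensional), and compares it with the isotropy spectral sequence $E_1^{p,q}=\prod_{\sigma\in T_q}\widehat{\Ext}^p_{kG_\sigma}(M,N)\Rightarrow\widehat{\Ext}^{p+q}_{kG}(M,N)$ from \cite[Chapter VII]{Bro} for the action on the tree; your Mayer--Vietoris/five-lemma phrasing is the same comparison in disguise, and your worry about convergence is harmless because a two-column spectral sequence converges strongly.

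The genuine gap is essential surjectivity, and you have correctly identified it as the step you cannot yet do --- but you then leave it undone, whereas it is precisely the point at which the paper does \emph{not} argue by hand: it invokes \cite[Lemma 7.1]{MS}, which is exactly the statement that a compatible family of modules over the vertex and edge groups of a graph of groups glues to a $kG$-module. Your plan to extend the explicit $C(\varphi)$ construction runs into two concrete obstructions. First, the formula $b\cdot m=\varphi^{-1}(b\varphi(m))$ is special to an amalgam $A\ast_CB$; when the quotient graph $\Gamma$ has loops the group $G$ contains HNN extensions, the stable letter does not lie in any vertex group, and there is no way to define its action by such a pointwise formula on a single underlying vector space --- one needs the full Bass--Serre structure theory (or, as in \cite{MS}, an induction/colimit argument over the tree). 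Second, when $\Gamma$ is infinite (e.g.\ the ray appearing later for locally finite groups), you must rigidify infinitely many stable equivalences to genuine isomorphisms \emph{simultaneously and compatibly}; adding projectives edge-by-edge as in \cite[Section 5]{Sym} changes the vertex modules, and at a vertex meeting infinitely many edges this requires an argument you have not supplied. So as written the proposal does not prove the theorem; it becomes a proof once you replace the sketched gluing by a citation of \cite[Lemma 7.1]{MS} (or reprove that lemma, which is real work independent of the $\infty$-categorical framework).
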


\begin{proof}

Consider the canonical functor $$\Psi\colon\StMod(kG)\to \displaystyle \varprojlim_{\sigma\in \Gamma^\textrm{op}} \StMod(kG_\sigma).$$  Let $\C$ denote the right-hand side $\infty$-category. Fix $M,N$ in $\StMod(kG)$. Let $D$ be the diagram of shape $\Gamma^\textrm{op}$ that maps $\sigma$ to the mapping space $\underline{\Hom}_{G_\sigma}(M_\sigma,N_\sigma)$.  The Bousfield-Kan spectral sequence for homotopy limits $$E^{p,q}_2=H^p(\Gamma;\pi_{-q} D)$$ converges to the homotopy groups $$\pi_{-(p+q)}\underline{\Hom}_\C(\Psi(M),\Psi(N))$$ 
Recall that the homotopy groups $\pi_i\underline{\Hom}_H(X,Y)$ are given by the complete cohomology $\widehat{\Ext}^{-i}_H(X,Y)$. On the other hand, we have a spectral sequence (see \cite[Chapter VII]{Bro}) to compute the homotopy groups of the mapping space $\underline{\Hom}_G(M,N)$ given by 
$$\widetilde{E}_1^{p,q}= \prod_{\sigma \in T_q}\widehat{\Ext}^p_{kG_\sigma}(M,N)\Rightarrow \widehat{\Ext}^{p+q}_{kG}(M,N)$$ where $T_q$ is a set of representatives of the $G$--orbits of $q$-simplices of $T$. In particular $T_0$ is in bijection with  $V\Gamma$ and $T_1$ is in bijection with $E\Gamma$. We claim that the induced map $\pi_n(\Psi)\colon \pi_n\underline{\Hom}_G(M,N)\to \pi_n\underline{\Hom}_\C(\Psi(M),\Psi(N))$ is an isomorphism for all $n\in\mathbb{Z}$. First, note that the two spectral sequences $\widetilde{E}^{p,q}$ and $E^{p,q}$ look the same at the page 2. We will show that $\pi_\ast(\Psi)$ is compatible with the the obvious comparison map between the spectral sequences. By examining both spectral sequences, we obtain short filtrations
\begin{align*}
    0=\widetilde{F}^2\subseteq\widetilde{F}^1=\widetilde{E}^{1,n}_\infty\subseteq \widetilde{F}^0=  \pi_n\underline{\Hom}_G(M,N) \\
    0=F^2\subseteq F^1=E^{1,n}_\infty \subseteq F^0=\pi_n\underline{\Hom}_\C(\Psi(M),\Psi(N)). 
\end{align*}

\noindent Since the map $\Psi$ is induced by the restriction functors, it follows that $\pi_n(\Psi)$ is compatible with $\widetilde{E}^{0,n}_\infty\to E^{0,n}_\infty$. It remains  to verify that  $\pi_\ast\Psi$ is compatible with the filtrations $\widetilde{F}$ and $F$. Once again using that $\Psi$ is induced by the restriction functors and comparing $\widetilde{E}^{p,q}_1$ with the $p$-term of the complex $\overline{C}^\ast(\Gamma, \pi_{-q}\circ D)$ which computes $H^p(\Gamma,\pi_{-q}\circ D)$ (see Remark \ref{reduced complex}), we deduce that the map $\widetilde{E}^{1,n+1}_\infty\to E^{1,n+1}_\infty$ is compatible with $\pi_n(\Psi)$. Therefore the claim follows and we obtain that $\Psi$ is fully faithful.  By \cite[Lemma 7.1]{MS}, we have that $\Psi$ is essentially surjective, and hence an equivalence. 
\end{proof}

\begin{remark}\label{construction of the left adjoint}
In fact, following \cite{HY} we can describe a left adjoint $\Upsilon$ of $\Psi$ (hence an inverse)  as the composition $$\C \xrightarrow[]{\Upsilon^{\Gamma^\textrm{op}}} \StMod(kG)^{\Gamma^{\textrm{op}}} \xrightarrow[]{\lim } \StMod(kG)$$ where $\lim$ is the left adjoint of the constant diagram functor, and $\Upsilon^{\Gamma^\textrm{op}}$ is the left adjoint of the induced functor on the lax limit. In other words, the functor $\Upsilon$ can be described informally by the formula $$\Upsilon((M_\sigma)_{\sigma\in \Gamma})= \lim_{\sigma\in \Gamma^\textrm{op}} (M\uparrow_{G_\sigma}^G ). $$    
\end{remark}

%%%%%%%%%%%%%55555

\section{Groups Admitting a Finite-Dimensional Model for $\underline{E}G$}

In this section, we will assume that $G$ admits a finite-dimensional model for the classifying space for proper actions $\underline{E}G$. It  has been conjectured that a group $G$ is of type $\Phi$ over $\mathbb{Z}$ if and only if it admits a finite-dimensional model for $\underline{E}G$ (see \cite[Conjecture A]{Tal}). We will follow  Balmer's ideas in \cite{Bal16} and adapt Mathew's proof of the decomposition of the stable module $\infty$-category for finite groups over the orbit category (see \cite[Section 9]{Mat16}) to exhibit an analogous decomposition for $\StMod(kG)$ in terms of the stable module $\infty$-category of its finite subgroups.

%%%%%%%%%%%%%%%%%%%%%%%%%%%%%%%%%%%%%%%%%%%%%%%%%%%%%%%
\subsection{A monadic adjunction}

\begin{proposition}
   Let $H$ be a subgroup of $G$. Then the adjunction given by $$\mathrm{Res}\colon \Mod(kG)^{\textrm{op}}\leftrightarrows \Mod(kH)^{\textrm{op}} \colon \mathrm{Ind}$$ is monadic. In particular, we have that $\Mod(kH)^\textrm{op}$ is equivalent to the category $\Mod_{\Mod(kG)^\textrm{op}}(A_H')$ of $A_H'$--modules in $\Mod(kG)^{\textrm{op}}$, where $A_H'=\mathrm{Ind}\circ \mathrm{Res}$.
\end{proposition}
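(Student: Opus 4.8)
The plan is to apply Beck's monadicity theorem to the right adjoint $\Ind$, working throughout in the opposite categories. First I would record the adjunction itself. On the ordinary module categories one has the familiar induction--restriction adjunction $\Ind \dashv \Res$, where $\Ind = kG\otimes_{kH}-$. Passing to opposite categories interchanges left and right adjoints, so $\Res^{\op}$ becomes left adjoint to $\Ind^{\op}$; this is exactly the adjunction $\Res\colon \Mod(kG)^{\op}\leftrightarrows \Mod(kH)^{\op}\colon \Ind$ in the statement, and its associated monad on $\Mod(kG)^{\op}$ is $A_H' = \Ind\circ\Res$ as claimed.

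With the adjunction in hand, monadicity reduces to verifying the two Beck conditions for $R=\Ind^{\op}\colon \Mod(kH)^{\op}\to\Mod(kG)^{\op}$. The first is that $R$ is conservative, equivalently that $\Ind$ reflects isomorphisms. Here I would use that $kG$ is free as a right $kH$-module: choosing left coset representatives gives $kG\cong\bigoplus_{G/H} kH$, so $\Ind = kG\otimes_{kH}-$ is exact and faithfully flat. Exactness identifies $\Ker$ and $\Coker$ of $\Ind(f)$ with the images under $\Ind$ of $\Ker f$ and $\Coker f$, and faithful flatness forces $\Ind(L)=0$ to imply $L=0$; together these show $\Ind$ reflects isomorphisms, and conservativity is preserved under $(-)^{\op}$. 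The second condition is that $\Mod(kH)^{\op}$ admits, and $R$ preserves, coequalizers of $R$-split pairs. Since $\Mod(kH)$ is abelian it has all equalizers, so $\Mod(kH)^{\op}$ has all coequalizers; and $R$ preserves them precisely because the exact functor $\Ind$ preserves equalizers. In particular the $R$-split case holds automatically.

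Beck's (precise) monadicity theorem then yields that the comparison functor $\Mod(kH)^{\op}\to \Mod_{\Mod(kG)^{\op}}(A_H')$ is an equivalence, which is the assertion. I expect the only genuine point requiring care to be conservativity: a priori one might worry that infinite index $[G:H]$ obstructs detecting isomorphisms, but the free-module decomposition $kG\cong\bigoplus_{G/H}kH$ makes faithful flatness --- and hence conservativity --- hold with no finiteness hypothesis. The coequalizer condition, by contrast, is formal, following from exactness of $\Ind$ together with completeness of the abelian category $\Mod(kH)$.
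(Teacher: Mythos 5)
Your proof is correct, but it takes a genuinely different route from the paper's. You verify the hypotheses of Beck's monadicity theorem for the right adjoint $\Ind^{\op}$: conservativity, via the coset decomposition $kG\cong\bigoplus_{G/H}kH$ as a right $kH$--module (so $\Ind$ is exact and faithfully flat, with no finiteness hypothesis on $[G:H]$), together with existence and preservation of coequalizers of split pairs, via exactness of $\Ind$ and completeness of the abelian category $\Mod(kH)$; all of these steps are sound. The paper instead invokes the splitting criterion of \cite[Lemma 2.10]{Bal15}: it suffices to exhibit a natural retraction of the unit $1_{\Mod(kH)}\to \Res\circ\Ind$ of the adjunction $\Ind\dashv\Res$ (equivalently, a natural section of the counit after passing to opposite categories), which it does explicitly by $\psi_M(g\otimes m)=gm$ for $g\in H$ and $\psi_M(g\otimes m)=0$ for $g\notin H$. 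The trade-offs are worth noting. Your argument is standard and self-contained, but it leans on the abelian structure of the module categories (kernels, cokernels, faithful flatness), so it does not transport to the stable module $\infty$-category, where those tools are unavailable. The paper's purely equational splitting argument does transport--this is how the paper obtains the $\infty$-categorical analogue via \cite[Proposition 5.29]{MNN}--and the same explicit section $\psi$ is what underlies the separability of the algebra $A_H$ established in the proposition that follows. So both proofs establish the statement as written, but the paper's choice of method is what powers the rest of Section 4.
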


\begin{proof}
    By \cite[Lemma 2.10]{Bal15} it is enough to exhibit a natural section of the counit $\epsilon\colon 1_{\Mod(kH)} \to \mathrm{Res}\circ\mathrm{Ind}$ (since it is the opposite adjuntion, we are writing the unit of the adjunction $\mathrm{Ind}-\mathrm{Res}$). Recall that the $M$-component of the counit is given by $m\mapsto 1\otimes m$ for $m\in M$. For a $kH$--module $M$, we define  $$\psi_M\colon\mathrm{Res}(\mathrm{Ind}(M))\to M$$ as the map 
   $$ g\otimes m \mapsto \left\{
        \begin{array}{ll}
            gm & \textrm{if } g\in H \\
         
            0 & \textrm{if } g\not\in H.
        \end{array}
    \right.
    $$
Note that this defines a natural transformation $\psi\colon\mathrm{Res}\circ\mathrm{Ind}\to 1_{\Mod(kH)}$ such that $\epsilon\psi=1$, hence the result follows. 
\end{proof}

\begin{definition}\label{Induced algebra object}
Let $H$ be a subgroup of $G$. Let $A_H$ denote the $kG$--module $k(G/H)$. Define a comultiplication $\mu\colon A_H\to A_H\otimes A_H$ by $\gamma\mapsto \gamma\otimes \gamma$, and a counit $\epsilon\colon A_H\to k$ by the augmentation map.       
\end{definition}

\begin{proposition}\label{Monad induction-restriction 1-cat}
    Let $H$ be a subgroup of $G$. We have that $(A_H,\mu,\epsilon)$ defines a separable algebra object on the symmetric monoidal category $\Mod(kG)^\textrm{op}$. Moreover, the monad $A_H'$ induced by the adjunction   $$\mathrm{Res}\colon \Mod(kG)^{\textrm{op}}\leftrightarrows \Mod(kH)^{\textrm{op}} \colon \mathrm{Ind}$$ is isomorphic to the monad induced by $A_H\otimes-$.
\end{proposition}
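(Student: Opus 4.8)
The plan is to prove the final statement in two independent parts. First I would verify that $(A_H,\mu,\epsilon)$ is a separable (commutative) algebra object in $\Mod(kG)^{\op}$; second I would identify the monad $A_H\otimes_{\Mod(kG)^{\op}}(-)$ with the monad $A_H'=\Ind\circ\Res$ of the previous proposition. Throughout I will keep careful track of the opposite category: an algebra object in $\Mod(kG)^{\op}$ is precisely a coalgebra object in $\Mod(kG)$, and the tensor product in $\Mod(kG)^{\op}$ agrees with $\otimes_k$ on objects but reverses the structure maps. So the separability and unit/counit axioms I must check are really the \emph{co}algebra axioms for $k(G/H)$ in $\Mod(kG)$, together with a splitting of the comultiplication.

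For the algebra-object part, I would work with the explicit basis $\{\gamma : \gamma\in G/H\}$ of $A_H=k(G/H)$. Coassociativity and counitality of $\mu(\gamma)=\gamma\otimes\gamma$, $\epsilon(\gamma)=1$ are the standard "grouplike" diagonal identities and are immediate on basis elements; these are the algebra (i.e.\ associativity/unit) axioms read in $\Mod(kG)^{\op}$. Commutativity follows since $\gamma\otimes\gamma$ is symmetric. The one genuinely structural point is \emph{separability}: in $\Mod(kG)^{\op}$ this asks for a section of $\mu$ as a bimodule map, equivalently a $kG$-linear retraction $\sigma\colon A_H\otimes A_H\to A_H$ of the diagonal $\mu$ satisfying the bimodule compatibility. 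The natural candidate is $\sigma(\gamma\otimes\gamma')=\delta_{\gamma,\gamma'}\,\gamma$, i.e.\ projection onto the diagonal, which is manifestly $G$-equivariant because the $G$-action permutes the basis diagonally; I would then check $\sigma\circ\mu=\id_{A_H}$ and the two bimodule identities on basis elements. This is the separability witness and it mirrors the section $\psi$ used in the monadicity proof above.

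For the identification of monads, the key computation is the natural isomorphism $A_H\otimes M\cong \Ind_H^G\Res_H^G M$ of $kG$-modules, where the left-hand tensor is the diagonal action on $k(G/H)\otimes_k M$. This is the classical "tensor identity" (Frobenius/projection formula): the map $\gamma\otimes m\mapsto$ the class of the corresponding element of $\Ind_H^G\Res_H^G M$ is a $kG$-isomorphism, natural in $M$. I would then verify that under this isomorphism the comonad structure on $A_H\otimes(-)$ in $\Mod(kG)^{\op}$ (coming from $\mu,\epsilon$) matches the monad structure $A_H'=\Ind\circ\Res$ in $\Mod(kH)^{\op}$ from the adjunction --- that is, the multiplication $\mu\otimes\id$ corresponds to the unit of $\Ind\dashv\Res$ and $\epsilon\otimes\id$ corresponds to its counit. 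Since both structures are built from the same unit/counit data, this is a naturality diagram chase rather than a new idea.

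I expect the main obstacle to be purely bookkeeping: correctly dualizing every map when passing to $\Mod(kG)^{\op}$, so that comultiplication becomes multiplication and the counit becomes a unit, and then matching the resulting monad to $A_H'=\Ind\circ\Res$ with the right handedness. In particular one must confirm that the separability splitting $\sigma$ is exactly the opposite-category avatar of the section $\psi_M$ appearing in the monadic adjunction proposition, so that the separable structure is compatible with the comonadicity already established. Once the tensor identity $A_H\otimes M\cong\Ind_H^G\Res_H^G M$ is in hand and the variance conventions are fixed, both claims reduce to checks on the grouplike basis elements of $k(G/H)$, and no deeper input is needed.
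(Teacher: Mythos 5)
Your proposal is correct, and on the monad-identification half it coincides with the paper's argument: the paper writes down the same untwisting isomorphism, $\theta_M(g\otimes m)=gH\otimes gm$ with inverse $\gamma\otimes m\mapsto g\otimes g^{-1}m$ (your ``tensor identity''), and then checks compatibility of units and of multiplications by exactly the diagram chase you describe; the coalgebra axioms are likewise dismissed as a basis-element check, as in your first step. The genuine difference is in how separability is obtained. The paper constructs no splitting at all: it asserts that the separability of $A_H$ \emph{follows from} the monad isomorphism with $A_H'=\Ind\circ\Res$, implicitly transferring the natural section $\psi$ of the counit (from the preceding monadicity proposition) across $\theta$ to obtain a bimodule section of the multiplication. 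You instead exhibit the (co)separability witness directly: $\sigma(\gamma\otimes\gamma')=\delta_{\gamma,\gamma'}\,\gamma$, which is $G$-equivariant, splits the diagonal $\mu$, and satisfies the two bicomodule identities, all verified on grouplike basis elements and valid with no finiteness assumption on $G/H$. Your closing observation that $\sigma$ is the opposite-category avatar of $\psi$ is exactly right: under $\theta$, the section $\Ind\psi\Res$ of the monad multiplication corresponds to $\sigma\otimes\id$, so the two proofs produce the same separability structure. What each route buys: yours is self-contained and sidesteps the (unstated in the paper) justification that separability of an algebra can be read off from separability of its associated monad, at the cost of a few extra basis computations; the paper's is shorter given that $\psi$ is already available, and emphasizes that no data beyond the previous proposition is needed.
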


\begin{proof}
    It is straightforward to verify that $(A_H,\mu,\epsilon)$ defines a coalgebra object in $\Mod(kG)$ and therefore an algebra object in $\Mod(kG)^\textrm{op}$. The separability of $A_H$ will follow from the equivalence, as monads, with $A'_H$. Recall that we have a natural isomorphism of functors $$\theta\colon \mathrm{Ind}\circ \mathrm{Res} \to A_{H}\otimes-$$ where the $M$-component is given by $\theta_{M}(g\otimes m)= gH\otimes gm$ and the inverse is given by $\theta_M^{-1}(\gamma\otimes m)= g\otimes g^{-1}m$ for any choice of $g\in\gamma$. Note that we have a compatibility of the units, that is, $\theta_M\circ {\epsilon'}_M=\epsilon\otimes1_M$ for any $kG$--module $M$. The multiplication $\mu'$ of the monad is given by $\mu'(g\otimes m)= g\otimes 1\otimes m$. Then the following diagram is commutative.

\centerline{\xymatrix{
kG\otimes_{kH}(kG\otimes_{kH}M)  \ar[r]^{\theta^2_M} & k(G/H)\otimes k(G/H)\otimes M \\ 
kG\otimes_{kH} M \ar[u]_{\mu} \ar[r]_{\theta_M} & k(G/H)\otimes M \ar[u]^{\mu'} }}

The map $\theta^2_M$ is given as follows.  For any $g,g'\in G$, we have that $\theta_M^2(g\otimes g'\otimes m)=gH\otimes gg' H\otimes gg'm$. Then the result follows. 
\end{proof}

Let $H$ be a subgroup of $G$, and consider the image of $A_H$ in the stable module category, still denoted by $A_H$. We will show that the analogous result in the $\infty$-categorical setting  of the previous proposition also holds.

\begin{proposition}\label{Monad induction-restriction}
Let $H$ be a subgroup of $G$. Let $A_H\in \CAlg(\StMod(kG)) $. Then there is an equivalence $$\Mod_{\StMod(kG)^{\textrm{op}}}(A_H)\simeq \StMod(kH)^{\textrm{op}}$$ and we can identify the adjunction $\StMod(kG)^{\textrm{op}}\leftrightarrows \Mod_{\StMod(kG)^{\textrm{op}}}(A_H)$ with the adjunction $\mathrm{Res}\colon\StMod(kG)^{\textrm{op}}\leftrightarrows \StMod(kH)^{\textrm{op}}\colon \mathrm{Ind}$.    
\end{proposition}

\begin{proof}
    By \cite[Theorem 2.5]{Ram23}, we have that the forgetful functor $$\mathrm{Ho}\Mod_{\StMod(kG)^{\textrm{op}}}(A_H)\to \Mod_{\underline{\StMod}(kG)^{\textrm{op}}}(A_H)$$ is an equivalence. Moreover, the functor $\mathrm{Ho}\Mod_{\StMod(kG)^{\textrm{op}}}(A_H)\to \underline{\StMod}(kH)$ factors through the previous equivalence. Hence by Proposition \ref{Monad induction-restriction 1-cat} we deduce the result. 
\end{proof}

%%%%%%%%%%%%%%%%%%%%%%%%%%%%

\subsection{Decompositions of the stable module $\infty$-category}\label{subsection 4.2}

Recall that an action of a group $G$ on a space  $X$ is called cocompact if the orbit space is compact. Moreover, a group $G$ has a cocompact model $X$ for $\underline{E}G$, if the action of $G$ on $X$ is cocompact.

\begin{proposition}\label{A cal admits descent}
Let $G$ be a group with a finite-dimensional cocompact model $X$ for $\underline{E}G$. Let $\mathscr{F}$ be the family of finite subgroups of $G$.  Then the commutative algebra object $$A=\prod_{H\in \mathscr{S}} A_H\in \CAlg(\StMod(kG)^\textrm{op})$$ admits descent, where $\mathscr{S}$ is a set of representatives of the $G$-orbits of $\mathscr{F}$.
\end{proposition}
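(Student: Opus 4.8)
The plan is to apply Mathew's descent criterion directly: by definition $A$ admits descent exactly when $\StMod(kG)^{\op}$ is the smallest thick $\otimes$-ideal containing $A$. Since any thick $\otimes$-ideal that contains the unit object is automatically the whole category (for any $Y$ one has $Y\cong k\otimes Y$, which lies in the ideal once $k$ does), it suffices to show that the unit $k$ lies in the thick $\otimes$-ideal $\langle A\rangle$ generated by $A$. Passage to the opposite category is harmless here: thickness and the $\otimes$-ideal condition are self-dual, and $k$ is the unit of both $\StMod(kG)$ and $\StMod(kG)^{\op}$, so I would carry out the whole argument inside $\StMod(kG)$ itself and transport the conclusion back.

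First I would extract an algebraic resolution of the unit from the geometry of $X$. Because $X$ is a contractible finite-dimensional cocompact model for $\underline{E}G$, its augmented cellular chain complex with coefficients in $k$
$$0\to C_n(X;k)\to \cdots \to C_0(X;k)\to k\to 0$$
is exact, and it is finite since $\dim X=n<\infty$. Cocompactness forces each $C_i(X;k)$ to be a finitely generated permutation module, a finite direct sum $\bigoplus_\sigma k(G/G_\sigma)$ indexed by orbit representatives of the $i$-cells, and every stabilizer $G_\sigma$ is finite because $X$ models $\underline{E}G$. Using that conjugate stabilizers give isomorphic permutation modules, each $C_i(X;k)$ is therefore isomorphic to a finite direct sum of the modules $A_H=k(G/H)$ with $H\in\mathscr{S}$.

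Next I would observe that each $A_H$ with $H\in\mathscr{S}$ is a retract of $A=\prod_{H\in\mathscr{S}}A_H$: cocompactness bounds the number of conjugacy classes of stabilizers, so $\mathscr{S}$ is finite and this product agrees with the corresponding coproduct, through which each factor is a retract. Hence every $C_i(X;k)$ lies in $\langle A\rangle$. Splitting the resolution into short exact sequences $0\to Z_i\to C_i(X;k)\to Z_{i-1}\to 0$ with $Z_{-1}=k$, each of which is a distinguished triangle in $\underline{\StMod}(kG)$ by the description of the triangulated structure, a downward induction on $i$ exhibits $k$ as built from the $C_i(X;k)$ by finitely many cofiber sequences. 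As $\langle A\rangle$ is thick, it is closed under these extensions, so $k\in\langle A\rangle$.

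Finally, since $k$ is the unit and $\langle A\rangle$ is a $\otimes$-ideal, every object lies in $\langle A\rangle$, giving $\langle A\rangle=\StMod(kG)^{\op}$, i.e.\ $A$ admits descent. The only genuinely delicate point is the passage from the algebraic resolution supplied by $X$ to a \emph{finite} filtration of the unit inside the stable $\infty$-category; this is exactly where one uses that short exact sequences of modules are the distinguished triangles of $\underline{\StMod}(kG)$, together with the finiteness and permutation-module structure of the resolution — precisely the input guaranteed by a finite-dimensional cocompact model for $\underline{E}G$.
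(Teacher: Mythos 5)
Your proof is correct and follows essentially the same route as the paper's: both use cocompactness to make $\mathscr{S}$ finite (so the product $A$ is a finite coproduct and each chain module $C_i(X;k)$ lies in $\langle A\rangle$ via retracts of finite sums of the $A_H$), and then use contractibility of $X$ to get an exact augmented complex exhibiting $k$, hence everything, in the thick $\otimes$-ideal $\langle A\rangle$. You merely spell out two points the paper leaves implicit — the harmlessness of passing to the opposite category and the downward induction through the short exact sequences viewed as distinguished triangles — which is a difference of detail, not of method.
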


\begin{proof}
 Let $C_*(X)$ denote the chain complex of $X$ with coefficients in $k$.  By the hypothesis on $X$, the set $\mathscr{S}$ is finite. Since the forgetful functor $$\CAlg(\StMod(kG)^\textrm{op})\to\StMod(kG)^\textrm{op}$$ commutes with limits (see \cite[Proposition 3.2.2.1]{Lur17}), it follows that $A$ is just a finite product in $\StMod(kG)^\textrm{op}$, and hence a finite coproduct. As a consequence, we have that $C_n(X)$ is a retract of a finite number of copies of $A$. Therefore $C_n(X)$ is  contained in the smallest thick $\otimes$-ideal  $\langle A\rangle $ containing $A$, for any $n\in \mathbb{Z}$. Since $X$ is contractible, we have that the augmented chain complex $\widetilde{C}_*(X)$ is exact, and therefore $k$ is in $\langle A\rangle $. It follows that $\langle A\rangle=\StMod(kG)^{\textrm{op}}$.
\end{proof}

%%%%%%%%%%%%%%%%%%%%%%%%%%%%%%%%%%%555555

\begin{remark}\label{generalizacion del objeto algebra que admite descenso}
Consider the same notation as in the previous proposition. Let $\mathscr{S}'$ denote the set of Sylow $p$-subgroups of the elements of $\mathscr{S}$. Then the commutative algebra object $$B=\prod_{H\in \mathscr{S}'} A_H\in \CAlg(\StMod(kG)^\textrm{op})$$ has $A$ as a retract, thus $B$ admits descent as well. This follows since $k$ is a retract of $k\ind_S^F$ as $kF$--modules, where $F$ is a finite group and $S$ is a Sylow $p$-subgroup of $F$.  In the same fashion, we can construct commutative algebra objects in   $\CAlg(\StMod(kG)^\textrm{op})$ which admit descent as long as the set of subgroups indexing our commutative algebra object contains a copy of representatives of the Sylow $p$-subgroups of the elements in $\mathscr{S}$. 
\end{remark}

%%%%%%%%%%%%%%%%%%%%%%%%%%%%%%%%%%%555555

Recall that the \textit{orbit category} $\mathcal{O}(G)$ is the category with objects the $G$-sets of the form $G/H$ where $H$ is a subgroup of $G$, and the morphisms are given by $G$-maps. Given a collection $\mathcal{A}$ of subgroups of $G$, that is, a set of subgroups of $G$ closed under conjugation, we let $\mathcal{O}_{\mathcal{A}}(G)\subseteq \mathcal{O}(G)$ denote the full subcategory spanned by the objects $G/H$ with $H\in  \mathcal{A}$. 

\begin{proposition}\label{decomposition of C}
   Let $G$ be a group with a finite-dimensional cocompact model $X$ for $\underline{E}G$. Let $\mathscr{F}$ be a family of finite subgroups of $G$ which contains the family of finite $p$-subgroups of $G$. Then there is an equivalence of symmetric monoidal stable $\infty$-categories $$\StMod(kG)\xrightarrow[]{\simeq} \varprojlim_{G/H\in\mathcal{O}_\mathscr{F}(G)^\textrm{op}} \StMod(kH). $$
\end{proposition}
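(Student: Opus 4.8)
The plan is to deduce the equivalence from the descent established in Proposition~\ref{A cal admits descent} together with the comonadicity criterion \cite[Proposition 3.22]{Mat16}, adapting to the present setting the finite-group argument of \cite[Section 9]{Mat16} and \cite{Bal16}. First I would set $A=\prod_{H\in\mathscr{S}}A_H\in\CAlg(\StMod(kG)^{\textrm{op}})$, which admits descent by Proposition~\ref{A cal admits descent}. By \cite[Proposition 3.22]{Mat16} the associated adjunction is comonadic, so the canonical functor
\[
\StMod(kG)^{\textrm{op}}\xrightarrow{\simeq}\mathrm{Tot}\Bigl(\Mod(A)\doublerightarrow{}{}\Mod(A\otimes A)\triplerightarrow{}{}\cdots\Bigr)
\]
is an equivalence. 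Since $(-)^{\textrm{op}}$ is an auto-equivalence of $\mathrm{Cat}_\infty$ and hence preserves limits, dualizing termwise reduces the statement to identifying this totalization, with each module category replaced by its opposite, with the cobar (Bousfield--Kan) resolution computing $\varprojlim_{G/H\in\mathcal{O}_\mathscr{F}(G)^{\textrm{op}}}\StMod(kH)$; the comparison functor is the one assembled from the restriction functors $\Res^G_H$.

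Then I would identify the terms of the cosimplicial object. By \cite[Proposition 5.29]{MNN} we have $\Mod_{\StMod(kG)^{\textrm{op}}}(A_H)\simeq\StMod(kH)^{\textrm{op}}$, compatibly with restriction and induction, so $\Mod(A)^{\textrm{op}}\simeq\prod_{H\in\mathscr{S}}\StMod(kH)$. Because $A$ is a finite product and $\otimes$ distributes over finite (co)products, $A^{\otimes(n+1)}$ splits as $\prod_{H_0,\dots,H_n\in\mathscr{S}}A_{H_0}\otimes\cdots\otimes A_{H_n}$, and the double-coset formula $k(G/H)\otimes k(G/K)\cong\bigoplus_{HgK\in H\backslash G/K}k\bigl(G/(H\cap{}^{g}K)\bigr)$ decomposes each factor into a product of modules $k(G/L)$ indexed by the $G$-orbits on $G/H_0\times\cdots\times G/H_n$. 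This exhibits $\Mod(A^{\otimes(n+1)})^{\textrm{op}}$ as the product of the stable module $\infty$-categories of the stabilizers occurring in these orbits, matching termwise the cobar resolution of the orbit-category limit.

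Finally I would verify that the coface and codegeneracy maps of the Amitsur complex correspond, under these decompositions, to the simplicial operators of the nerve of $\mathcal{O}_\mathscr{F}(G)$, so that the cosimplicial object is precisely the cobar resolution of the orbit-category limit; this is the finite-group computation of \cite[Section 9]{Mat16}, which goes through once the terms have been matched. Two points are specific to the infinite case. First, although descent only involves the stabilizer representatives $\mathscr{S}$, the subgroups $H\cap{}^{g}K$ appearing in the higher terms range over all finite subgroups subconjugate to stabilizers; since $X$ is a model for $\underline{E}G$, the fixed points $X^{H}$ are contractible, hence nonempty, for every finite subgroup $H$, so each finite subgroup fixes a cell of $X$ and is subconjugate to some $G_\sigma$, and therefore the indexing genuinely runs over all of $\mathscr{F}$. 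Second, as the descent equivalence and each identification respect $\otimes$ and the stable structure, the resulting equivalence is one of symmetric monoidal stable $\infty$-categories. I expect the main obstacle to be this last naturality check: matching the coface and codegeneracy maps of the Amitsur complex with the simplicial structure of the orbit-category nerve through the double-coset bookkeeping, while correctly tracking the opposite categories throughout.
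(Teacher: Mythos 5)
Your first half coincides with the paper's proof: both start from $A=\prod_{H\in\mathscr{S}}A_H$, use Proposition~\ref{A cal admits descent} together with \cite[Proposition 3.22]{Mat16} to write $\StMod(kG)^{\textrm{op}}$ as the totalization of the Amitsur complex $\Mod(A^{\otimes \bullet+1})$, and identify $\Mod_{\StMod(kG)^{\textrm{op}}}(A_H)\simeq\StMod(kH)^{\textrm{op}}$ via \cite[Proposition 5.29]{MNN}. The gap is in your second half, where the totalization must be identified with $\varprojlim_{\mathcal{O}_\mathscr{F}(G)^{\textrm{op}}}\StMod(kH)$. The Amitsur complex is the cobar construction on the single object $Z=\bigsqcup_{H\in\mathscr{S}}G/H$: by your own Mackey decomposition, its $n$-th term is a product of categories $\StMod(kL)$ indexed by the $G$-orbits on $G/H_0\times\cdots\times G/H_n$ with $H_i\in\mathscr{S}$, i.e.\ by multi-intersections $L=\bigcap_i{}^{g_i}H_i$. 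This is \emph{not} the Bousfield--Kan cosimplicial replacement of the orbit-category limit, whose $n$-th term is a product indexed by $n$-chains $G/K_0\to\cdots\to G/K_n$ in the nerve of $\mathcal{O}_\mathscr{F}(G)$, with the $K_j$ ranging over \emph{all} finite subgroups. The two cosimplicial objects differ already termwise, so there are no coface/codegeneracy identifications to check; the assertion that the Amitsur complex ``is precisely the cobar resolution of the orbit-category limit'' is the entire content of the proposition, not a bookkeeping step.

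Your attempted bridge---that since every finite subgroup fixes a cell, ``the indexing genuinely runs over all of $\mathscr{F}$''---is false: the subgroups occurring in the Amitsur complex are only intersections of conjugates of members of $\mathscr{S}$, and an arbitrary finite subgroup need not be of that form. For a stark counterexample take $G$ finite and $X$ a point (a cocompact model for $\underline{E}G$): then $\mathscr{S}=\{G\}$, $A=k$, and the Amitsur complex is the constant cosimplicial object $\StMod(kG)^{\textrm{op}}$, in which no proper subgroup of $G$ appears; the proposition nevertheless holds, but only because $G/G$ is terminal in $\mathcal{O}_\mathscr{F}(G)$, so the limit over $\mathcal{O}_\mathscr{F}(G)^{\textrm{op}}$ is evaluation at $G/G$. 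What contractibility of fixed points actually provides is weak terminality of $Z$: every orbit $G/K$ with $K\in\mathscr{F}$ admits a map \emph{to} $Z$. The missing step---the heart of the paper's argument---is to convert this into a cofinality statement: extend $G/H\mapsto\StMod(kH)^{\textrm{op}}$ by right Kan extension to the closure $\C$ of the representables under finite coproducts inside $\mathcal{P}(\mathcal{O}_\mathscr{F}(G))$ (this does not change the limit), and then apply \cite[Proposition 6.28]{MNN}: the \v{C}ech nerve $Z^{\bullet+1}$ of a weakly terminal object is cofinal, so $\varprojlim_{\C^{\textrm{op}}}f$ is computed by the totalization of $f\circ Z^{\bullet+1}$, which is the Amitsur complex. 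Without this cofinality input, or an equivalent substitute, your argument does not close.
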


\begin{proof}
Let $A=\prod_{H\in \mathscr{S}} A_H$ be a commutative algebra object in $\StMod(kG)^\textrm{op}$ as in Proposition \ref{A cal admits descent}. Since $A$ admits descent, Proposition 3.22 in \cite{Mat16} gives us a decomposition $$\StMod(kG)^{\textrm{op}}\simeq \textrm{Tot} \left( \Mod_{\StMod(kG)^{\textrm{op}}}(A)\doublerightarrow{}{} \Mod_{\StMod(kG)^{\textrm{op}}}(A^{\otimes2})\triplerightarrow{}{}\ldots \right )$$
Consider the smallest full subcategory $\C$ of $$ \mathcal{P}(\mathcal{O}_\mathscr{F}(G))=\textrm{Fun}(\mathcal{O}_\mathscr{F}(G)^\textrm{op},\mathcal{S})$$    that contains the essential image of the Yoneda embedding $$\mathcal{O}_\mathscr{F}(G)\xrightarrow[]{y} \mathcal{P}(\mathcal{O}_\mathscr{F}(G))$$ and which is stable under finite coproducts (see \cite[Remark 5.3.5.9]{Lur09}). Then we can extend the stable module $\infty$-category functor

  \[
\begin{split}
 f_0\colon \mathcal{O}_\mathscr{F}(G)^\textrm{op} & \to \widehat{\textrm{Cat}}^\otimes_\infty \\
G/H & \mapsto \mathrm{Mod}_{\StMod(kG)^\textrm{op}}(A_H)\simeq \StMod(kH)^\textrm{op}
\end{split}
\] 
to a functor $$f\colon \C^\textrm{op}\to \widehat{\textrm{Cat}}^\otimes_\infty$$ 
\noindent that sends finite coproducts to limits, where the equivalence involved is given in Proposition \ref{Monad induction-restriction}. Moreover, the functor $f$ is the right Kan extension of $f_0= f|_{\mathcal{O}_\mathscr{F}(G)^\textrm{op}}$. Thus we have an equivalence  $$\varprojlim_{\C^\textrm{op}}f\simeq \varprojlim_{\mathcal{O}_\mathscr{F}(G)^\textrm{op}} f_0.$$ In particular, the object $\bigsqcup_{H\in \mathscr{S}}G/H$ in $\C^\textrm{op}$ is mapped under the functor $f$ to $$\prod_{H\in \mathscr{S}} \StMod(kH)\simeq \mathrm{Mod}_{\StMod(kG)^\textrm{op}}(A).$$   

On the other hand, consider the object $Z=\bigsqcup_{H\in \mathscr{S}} G/H\in \C$. Note that any object $Y\in \C$ admits a map $Y\to Z$. By \cite[Proposition 6.28]{MNN} the simplicial object $Z^{\bullet +1}\colon \Delta^\textrm{op}\to \C$ 
is cofinal. Moreover, note that the cosimplicial diagram $f\circ Z^{\bullet+1}$ is in fact the cobar construction. Hence the result follows. 
\end{proof}

%%%%%%%%%%%% 

It is worth highlighting  that working with the orbit category of an infinite group might be not easy, hence it is convenient to have decompositions of the stable module $\infty$-category in terms of \textit{simpler} categories.  For this, the $\infty$-categorical version of Quillen's Theorem A will play an important role. 

Recall that $G$ denotes a group with a finite-dimensional cocompact model $X$ for $\underline{E}G$. For the rest of this section, we will assume that $X$ is a simplicial complex, and the action of $G$ on $X$ is simplicial. As before, $\mathscr{F}$ denotes the family of finite subgroups of $G$ and  $\textrm{Stab}$ denotes the family of finite subgroups of $G$ of the form $G_\sigma$, for some simplex $\sigma$ of  $X$.

\begin{proposition}\label{cofinalidad de la inclusion de subgrupos de isotropia}
 The full subcategory  $ \mathcal{O}_\textrm{Stab}(G)$ of the orbit category of $G$ induces a cofinal functor $\mathcal{O}_\textrm{Stab}(G) \to \mathcal{O}_{\mathscr{F}}(G)$. 
\end{proposition}

\begin{proof}
   By Quillen's Theorem A \cite[Corollary 4.1.3.3]{Lur09}  it is enough to verify that $\mathcal{O}_\textrm{Stab}(G)_{(G/H) /}$ has weakly contractible nerve, for all $G/H\in \mathcal{O}_\mathscr{F}(G)$. We will argue as in the first part of the proof of \cite[Proposition 6.31]{MNN}. Note that an element  $G/H\xrightarrow[]{[g]} G/G_\sigma$ in $\mathcal{O}_\textrm{Stab}(G)_{(G/H) /}$ is isomorphic 
  to $G/H\xrightarrow[]{[1]} G/G_{g\sigma }$. Hence the inclusion of the full subcategory of  $\mathcal{O}_\textrm{Stab}(G)_{(G/H) /}$ on objects of the form $G/H\xrightarrow[]{[1]} G/G_\sigma$ induces an equivalence, and it can be identified with the opposite of the poset of subgroups of $\textrm{Stab}$ that contain $H$. Let $\mathscr{S}_{H}$ denote this poset. 
  More generally, for a subgroup $K$ of $G$, consider the poset $\mathscr{S}_K=\{K'\in \textrm{Stab}\mid K\subseteq K'\} $.  We will show that $\mathscr{S}_H$ is contractible by induction on the length $l(H)$ of $H$, that is, the maximum length of an ascending chain $$H\subsetneq H_1\subsetneq\ldots \subsetneq H_n$$ with $H_1,\ldots,H_n$ finite subgroups of $G$. Note that $l(H)$ is finite since there are only a finite number of finite subgroups of $G$ up to conjugacy by our assumption on $G$. If $l(H)=0$, then it follows that $H$ must be in $\textrm{Stab}$ since any finite subgroup is contained in the isotropy group of some simplex. Hence, $\mathscr{S}_H$ has a minimum  element. Now, suppose that $l(H)>0$. If $H$ is in $\textrm{Stab}$, we are done. If not, consider the collection $\{X^K\mid K\in \mathscr{S}_H\}$ which is a cover of $X^H$ since for any point $x\in X^H$, we have that $x\in X^{G_x}$. Similarly, consider the cover $\{|\mathscr{S}_K|\mid K\in \mathscr{S}_H\}$ of $|\mathscr{S}_H|$. Moreover, note that both $X^K$ and $|\mathscr{S}_K|$ are contractible, for any $K\in \mathscr{S}_H$. If we show that for any finite collection of subgroups $K_1,\ldots,K_n$ in $\mathscr{S}_H$, the intersections $Y= X^{K_1}\cap\ldots\cap X^{K_n}$ and $Z=|\mathscr{S}_{K_1}|\cap\ldots\cap |\mathscr{S}_{K_n}|$ are either empty or contractible, then we can invoke Leray's nerve theorem (for instance, see \cite[Theorem 15.21, Remark 15.22]{Koz08}) to show that $\mathscr{S}_H$ is homotopy equivalent to $X^H$, which completes the proof since $X^H$ is contractible. Let $L$ be the subgroup of $G$ generated by $K_1,\ldots,K_n$. If $L$ is infinite, then both $Y$ and $Z$ are empty. But if $L$ is finite, then $Y$ agrees with $X^L$ and $Z$ with the realization of $\mathscr{S}_L$, in particular, $Y$ is contractible. On the other hand, we have that $H\subsetneq L$, and hence $l(L)<l(H)$. Therefore we can apply the inductive step to $\mathscr{S}_L$ to complete the proof.  
\end{proof}

\begin{corollary}\label{decomposition for Stab}
    There is an equivalence of symmetric monoidal stable $\infty$-categories $$\StMod(kG)\xrightarrow[]{\simeq} \varprojlim_{G/H\in\mathcal{O}_\textrm{Stab}(G)^\textrm{op}} \StMod(kH). $$
\end{corollary}

%%%%%%%%%%%%%%%%%%%%%%%%%%%%%%%%%%%

\section{Some Computations}
%%%%%%%%%%%%%%%%%%%%%%%%%%------------

\begin{definition}
The \textit{Picard space} $\mathrm{Pic}(\C)$ of a symmetric monoidal $\infty$-category $(\C, \otimes, \mathbf{1)}$ is the $\infty$-groupoid of $\otimes$-invertible objects in $\C$ and equivalences between them.    
\end{definition}

In other words, the Picard space is an enhancement of the Picard group, since the former encodes the latter as the connected components, but also keeps track of all higher isomorphisms. It is clear that $\mathrm{Pic}(-)$  defines a functor from the $\infty$-category of symmetric monoidal $\infty$-categories to the $\infty$-category of spaces.  Moreover, we can describe the higher homotopy groups of $\mathrm{Pic}(\C)$ as follows. 
$$
\pi_i\mathrm{Pic}(\C) = \left\{
        \begin{array}{ll}
           
            \textrm{PicGp}(\C) & \textrm{ if } n=0\\
            (\pi_0 \Hom_\C(\mathbf{1},\mathbf{1}))^\times & \textrm{ if } n=1 \\
            \pi_{i-1} \Hom_\C (\mathbf{1},\mathbf{1}) & \textrm{ if } n\geq 2.
        \end{array}
    \right.
$$ Moreover, the Picard space functor $\mathrm{Pic}(-)$ commutes with homotopy limits (see \cite[Proposition 2.2.3]{MS16}). In particular, we will obtain a limit decomposition of the Picard space of the stable module $\infty$-category for certain groups. If $G$ is a group of type $\Phi$ acting on a tree with associated graph $\Gamma$, then Theorem \ref{graphs of groups} gives us a decomposition of the Picard space 
\begin{equation}
\label{eqn:Pic decomposition graph of groups} 
\mathrm{Pic}(\StMod(kG) )\xrightarrow[]{\simeq} \varprojlim_{\sigma\in \Gamma^\textrm{op}} \mathrm{Pic}( \StMod(kG_\sigma) ).    
\end{equation}
 
 We will use the spectral sequence of Bousfield-Kan for homotopy limits to compute the Picard group of the stable module category. As we already mentioned, this spectral sequence have some convergence issues. Fortunately we can avoid those issues in our setting. For instance, as noticed in \cite[Section 2.2]{VdMW2021}, when $\C$ is stable, $\mathrm{Pic(\C)}$ can be viewed as a connective spectrum since it is an $\infty$-group like $\mathbb{E}_\infty$-space and it is called the \textit{Picard spectrum} of $\C$ (still denoted by $\mathrm{Pic}(\C)$). As a consequence, we obtain that  $\mathrm{Pic}(-)$  defines a functor from the $\infty$-category of symmetric monoidal $\infty$-categories $\mathrm{Cat}^\otimes_\textrm{st}$ to non-connective spectra $\mathrm{Sp}_{\geq0}$ which commutes with limits (see \cite[Section 2.2]{VdMW2021} for further details). Hence, given a symmetric monoidal stable $\infty$-category $\C$ as a limit of a diagram $D\colon I^\textrm{op}\to\mathrm{Cat}^\otimes_{\textrm{st}}$, we obtain a spectral sequence 
\begin{equation}\label{spectral sequence for Pic}
E^{p,q}_2=H^p(I;\pi_q \mathrm{Pic} \,D(-))\Rightarrow \pi_{q-p}(\varprojlim \mathrm{Pic}\, D).     
\end{equation}

 Moreover, for the stable module $\infty$-category of a group $G$ of type $\Phi$, the higher homotopy groups of the Picard space can be described as follows.   
$$
\pi_i\mathrm{Pic}(\StMod(kG)) = \left\{
        \begin{array}{ll}
           
            T(G) & \textrm{ if } n=0\\[5pt]
            \widehat{\textrm{Aut}}_G(k) & \textrm{ if } n=1 \\[5pt]
            \widehat{H}^{1-i}(G,k) & \textrm{ if } n\geq 2
        \end{array}
    \right.$$ where $ \widehat{\textrm{Aut}}_G(k)$ denotes the group of automorphisms of $k$ in the stable module category $\underline{\mathrm{StMod}}(kG)$. In particular, for groups acting on trees we can describe the Picard group as an extension of abelian groups  (c.f. \cite[Theorem 7.4]{MS}). 

\begin{corollary}\label{exact sequence for picard groups}

  Let $G$ be a group of type $\Phi$ which acts on a tree. Consider the associated graph of groups $\Gamma\to \textrm{Gps}$. Then we have a short exact sequence of abelian groups 
$$0\to H^1(\Gamma; \pi_1\circ \mathrm{Pic}\circ\StMod(-))\to T(G)\to H^0(\Gamma; \pi_0\circ \mathrm{Pic}\circ\StMod(-))\to 0.$$ where $\StMod(-)$ denotes the stable module $\infty$-category functor associated to the graph of groups, that is, it maps $\sigma\in\Gamma^\textrm{op}$ to $\StMod(kG_\sigma)$  (see Section \ref{section 3}). 

\end{corollary}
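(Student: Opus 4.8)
The plan is to feed the Picard-space decomposition \eqref{eqn:Pic decomposition graph of groups} into the Bousfield--Kan spectral sequence recorded above, and to exploit the fact that the indexing category $\Gamma$ is one-dimensional to force the spectral sequence to collapse into two columns. First I would set $D\colon\Gamma^{\mathrm{op}}\to\mathcal{S}$ to be the diagram $\sigma\mapsto\mathrm{Pic}(\StMod(kG_\sigma))$. Applying the functor $\mathrm{Pic}$ to the equivalence of Theorem \ref{graphs of groups} and using that $\mathrm{Pic}$ commutes with homotopy limits yields \eqref{eqn:Pic decomposition graph of groups}, so that $\mathrm{Pic}(\StMod(kG))\simeq\varprojlim D$. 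The associated spectral sequence then reads
$$E_2^{p,q}=H^p(\Gamma;\pi_q D)\Rightarrow\pi_{p-q}(\varprojlim D).$$
By the homotopy-group computation preceding the statement, the relevant coefficient systems are $\pi_0 D=\pi_0\circ\mathrm{Pic}\circ\StMod$ and $\pi_1 D=\pi_1\circ\mathrm{Pic}\circ\StMod$, which are diagrams of \emph{abelian} groups since $\mathrm{Pic}(\C)$ is an infinite loop space; consequently the entire spectral sequence is one of abelian groups.

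The key step is the vanishing $H^p(\Gamma;-)=0$ for all $p\geq 2$. Since $\Gamma$ is a graph, its cohomology with coefficients in any diagram of abelian groups is computed by the two-term complex
$$\prod_{v\in V\Gamma}D(v)\xrightarrow{\ d\ }\prod_{e\in E\Gamma}D(e),$$
which is concentrated in degrees $0$ and $1$. Hence $E_2^{p,q}$ is supported in the two columns $p\in\{0,1\}$. Every higher differential $d^r$ with $r\geq 2$ changes $p$ by $r\geq 2$ and therefore has source or target in a vanishing column, so the spectral sequence degenerates at the $E_2$-page and $E_2=E_\infty$.

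Finally I would read off $\pi_0(\varprojlim D)=T(G)$ from its $E_\infty$-filtration. The only contributions to the antidiagonal $p-q=0$ come from $E_\infty^{0,0}=H^0(\Gamma;\pi_0 D)$ and $E_\infty^{1,1}=H^1(\Gamma;\pi_1 D)$, the higher terms $E_\infty^{p,p}$ for $p\geq 2$ vanishing by the previous step. The decreasing filtration, with higher filtration degree giving the deeper subgroup, presents $T(G)$ as the extension
$$0\to H^1(\Gamma;\pi_1\circ\mathrm{Pic}\circ\StMod)\to T(G)\to H^0(\Gamma;\pi_0\circ\mathrm{Pic}\circ\StMod)\to 0,$$
which is exactly the claimed short exact sequence, the surjection onto $H^0$ being the edge map induced by restriction to the vertex groups. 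The main obstacle is not the algebra of the collapse but justifying convergence, given the convergence issues noted for the unstable Bousfield--Kan spectral sequence; I would circumvent this by observing that $\mathrm{Pic}$ lands in grouplike $E_\infty$-spaces and hence may be delooped to the connective Picard spectrum, whose homotopy-limit (descent) spectral sequence converges conditionally and, being concentrated in two columns, strongly. This simultaneously guarantees that the resulting extension is one of abelian groups.
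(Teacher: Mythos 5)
Your proposal is correct and follows essentially the same route as the paper's own proof: apply $\mathrm{Pic}$ to the decomposition of Theorem \ref{graphs of groups}, invoke the Bousfield--Kan spectral sequence for the homotopy limit, observe that since $\Gamma$ is a graph the $E_2$-page is concentrated in the columns $p=0,1$ so the sequence collapses, and read off the two-step filtration of $\pi_0$ as the claimed extension. Your additional step of delooping $\mathrm{Pic}$ to the connective Picard spectrum to settle convergence and to guarantee an extension of abelian groups is a sound refinement of a point the paper leaves implicit.
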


\begin{proof}
By Equation  (\ref{eqn:Pic decomposition graph of groups}), the spectral sequence of Equation  (\ref{spectral sequence for Pic}) has the form $$E^{p,q}_2= H^p(\Gamma; \pi_q\circ \mathrm{Pic}\circ\StMod)\Rightarrow \pi_{q-p} \mathrm{Pic}(\StMod(kG)).$$ Since $\Gamma$ is a graph, and hence no non-trivial composable maps, this spectral sequence  is trivial except possibly for $p=0,1$ (see Remark \ref{reduced complex}). Then the spectral sequence collapses at page two and the result follows.  
\end{proof}

\begin{remark}
Note that, by Remark \ref{reduced complex}, $H^0(\Gamma; \pi_0\circ\mathrm{Pic}\circ\StMod(-))$ corresponds to the kernel of the map 
$$\displaystyle \prod_{v\in V\Gamma} T(G_v)    \xrightarrow[]{\mathrm{Res}-\mathrm{Res}_f}   \displaystyle \prod_{e\in E\Gamma} T(G_e)$$ where $\mathrm{Res}-\mathrm{Res}_f$ is defined as follows. Fix $(M_v)_{v\in V\Gamma}\in \prod_{v\in V\Gamma}T(G_v)$. Then $$\Res -\Res_f (M_v)_{v\in V\Gamma}=(N_e)_{e\in E\Gamma}$$ where $N_e= M_{i(e)}\restr_{G_e}-f_e^\ast(M_{\tau(e)})\restr_{G_e}$, and $f^*_e$ is the functor induced by the morphism $ G_e \to G_{\tau(e)}$ in the graph of groups.  On the other hand, $H^1(\Gamma; \pi_0\circ\mathrm{Pic}\circ\StMod(-))$ corresponds to the cokernel of the map $$ \displaystyle \prod_{v\in VY} \widehat{\textrm{Aut}}_{G_v}(k)   \xrightarrow[]{\mathrm{Res}-\mathrm{Res}_f}  \displaystyle \prod_{e\in EY} \widehat{\textrm{Aut}}_{G_e}(k)$$ defined in a similar fashion. This agrees with \cite[Theorem 7.4]{MS}.
\end{remark}

\subsection{Countable locally finite $p$-groups}\label{Computations locally finite p-groups}

Recall that throughout this document, $k$ denotes a field of prime characteristic $p$. In this section we will use Corollary \ref{exact sequence for picard groups} to compute the Picard group of the stable module category for countable locally finite $p$-groups, at least as an abstract abelian group. Recall that a group $G$ is called \textit{a locally finite $p$–group} if every finitely generated subgroup is a finite $p$–group. The following result is well known (see for example \cite[Lemma 1.A.9]{KW}).

\begin{proposition}
Let $G$ be a locally finite group. Then $G$ is countable if and only if there is an ascending chain of finite subgroups $$G_1\leq G_2\leq G_3\leq...$$ such that $$G=\bigcup_{n\geq1}G_n.$$  In this case, we will say that $G_1\leq G_2\leq G_3\leq...$ is a \textit{tower} for $G$.
\end{proposition}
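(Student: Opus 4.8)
The plan is to prove both implications directly, with the forward direction being essentially immediate and the reverse requiring a small construction via an enumeration of the group.

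First I would dispatch the implication that assumes the existence of a tower: suppose $G$ admits an ascending chain of finite subgroups $G_1\leq G_2\leq\cdots$ with $G=\bigcup_{n\geq1}G_n$. Then $G$ is a countable union of finite sets, and hence countable. No appeal to local finiteness is needed in this direction; it is purely a cardinality count.

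For the converse, suppose $G$ is countable and fix an enumeration of its underlying set, say $G=\{g_1,g_2,g_3,\ldots\}$. I would then set $G_n:=\langle g_1,\ldots,g_n\rangle$, the subgroup generated by the first $n$ listed elements. Since $G$ is locally finite and each $G_n$ is finitely generated, every $G_n$ is a finite subgroup. By construction we have $G_n\leq G_{n+1}$ for all $n$, so these subgroups form an ascending chain, and since $g_i\in G_i$ for every index $i$, every element of $G$ lies in some $G_i$, whence $\bigcup_{n\geq1}G_n=G$. This exhibits the required tower.

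The only point deserving attention is that the converse genuinely depends on local finiteness: without that hypothesis the finitely generated subgroups $\langle g_1,\ldots,g_n\rangle$ need not be finite, and indeed a countable group such as a free abelian group of infinite rank admits no tower by finite subgroups. Thus local finiteness is precisely what upgrades finite generation to finiteness at each stage of the chain, and beyond this observation the argument is entirely formal.
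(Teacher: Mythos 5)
Your proof is correct: the forward direction is a cardinality count, and the converse is the standard argument of enumerating the group and taking the finitely generated (hence, by local finiteness, finite) subgroups $G_n=\langle g_1,\ldots,g_n\rangle$. The paper itself offers no proof, only a citation to \cite[Lemma 1.A.9]{KW}, and your argument is precisely the canonical one that reference gives, including the correct observation that local finiteness is what makes each $G_n$ finite.
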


\begin{proposition}\label{prop toral}
Let $G$ be a countable locally finite group. Consider a tower $G_1\leq G_2\leq ...$ of finite subgroups of $G$. Then the following hold. 
\begin{itemize}
    \item[(i)]  $G$ acts on a tree with isotropy groups in the family $\{G_n\}_{n\geq1}$.
    \item[(ii)] If $p$ divides the order of $G_r$ for some $r\geq1$, then
$$ T(G)\cong  \displaystyle \varprojlim  T(G_n) $$ where the maps in the inverse system are given by the restrictions maps.
\end{itemize}

\end{proposition}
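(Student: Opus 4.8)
The plan is to prove the two assertions in sequence, exploiting the tower structure directly.

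\medskip

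\textbf{Part (i).} First I would recall Bass--Serre theory: a group is the union of an ascending tower $G_1 \leq G_2 \leq \cdots$ precisely when it acts on a tree whose quotient graph is a ray. Concretely, I would construct the tree $T$ whose vertex set is $\bigsqcup_{n\geq 1} G/G_n$ and whose edges connect the coset $gG_n$ to the coset $gG_{n+1}$ (using the inclusion $G_n \hookrightarrow G_{n+1}$ to define the incidence). The connectedness and acyclicity of $T$ follow from the fact that $G = \bigcup_n G_n$ and that each $G_n \leq G_{n+1}$, so $T$ is a tree. The action of $G$ by left translation has quotient graph equal to the half-line $\bullet - \bullet - \bullet - \cdots$, with the $n$-th vertex having isotropy group $G_n$ and the $n$-th edge having isotropy group $G_n$ as well (since consecutive stabilizers along an edge are $G_n \cap G_{n+1} = G_n$). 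This realizes $G$ as the fundamental group of the graph of groups on the ray $\Gamma$ with vertex groups $G_n$, edge groups $G_n$, and edge maps the inclusions. All isotropy groups lie in $\{G_n\}_{n\geq 1}$, as required.

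\medskip

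\textbf{Part (ii).} Here I would apply Corollary \ref{exact sequence for picard groups} to the graph of groups from (i). The key simplification is that $\Gamma$ is a ray, so the cochain complex $\overline{C}^*(\Gamma; D)$ has vertices indexed by $n \geq 1$ and edges indexed by $n \geq 1$, with the $n$-th edge joining vertex $n$ to vertex $n+1$. For any diagram $D$ on $\Gamma$ the differential sends $(U(n))_n$ to $(U(n) - \rho_n U(n+1))_n$, where $\rho_n$ is the restriction/structure map $D(n+1) \to D(n)$ along the inclusion $G_n \hookrightarrow G_{n+1}$. Applying this to $D = \pi_0 \circ \mathrm{Pic} \circ \StMod$, whose value at vertex $n$ is $T(G_n)$, one identifies $H^0(\Gamma; D)$ as the kernel of this map, which is exactly the inverse limit $\varprojlim_n T(G_n)$ along the restriction maps. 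The remaining task is to show that the $H^1$-term vanishes, i.e.\ that the short exact sequence degenerates to the claimed isomorphism. The $H^1$-term is the cokernel of the corresponding map built from the groups $\widehat{\mathrm{Aut}}_{G_n}(k) = \pi_1 \mathrm{Pic}(\StMod(kG_n)) = (\pi_0 \underline{\Hom}_{G_n}(k,k))^\times = (k^\times)$ for $n$ with $p \mid |G_n|$, since stably $\widehat{H}^0(G_n; k) \cong k$ when $p$ divides the order. Because the tower is increasing and the restriction maps $k^\times \to k^\times$ on these units are identities (the unit object $k$ restricts to the unit object), the relevant map $\prod_n k^\times \to \prod_n k^\times$, $(a_n) \mapsto (a_n - a_{n+1})$, is surjective onto a cofinal tail, giving $H^1 = 0$.

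\medskip

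The main obstacle I anticipate is the vanishing of $H^1$, which hinges on understanding $\widehat{\mathrm{Aut}}_{G_n}(k)$ and the induced restriction maps precisely. The hypothesis that $p$ divides $|G_r|$ for some $r$ (hence for all $n \geq r$, since the tower is ascending in a $p$-group context) is exactly what forces $\widehat{H}^0(G_n; k) = \widehat{\mathrm{Ext}}^0_{kG_n}(k,k)$ to be the expected one-dimensional space, so that $\pi_1$ of the Picard space stabilizes along the tower and the lim-one type obstruction dissolves. I would verify that the cokernel computation really produces $0$ by checking that the telescoping map $(a_n) \mapsto (a_n - a_{n+1})$ on the eventually-constant system of units is surjective, which is a standard fact once one knows the restriction maps are isomorphisms for $n \geq r$. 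With $H^1 = 0$ established, Corollary \ref{exact sequence for picard groups} yields $T(G) \cong H^0(\Gamma; \pi_0 \circ \mathrm{Pic} \circ \StMod) \cong \varprojlim_n T(G_n)$, completing the proof.
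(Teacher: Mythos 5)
Your proposal is correct and follows essentially the same route as the paper: the same coset-tree construction for part (i) (the paper attributes it to an example of Ikenaga), and for part (ii) the same application of Corollary \ref{exact sequence for picard groups} to the ray, identifying $H^0$ with $\varprojlim T(G_n)$ and killing $H^1$ because the $\pi_1$-system $\widehat{\Aut}_{G_n}(k)\cong k^\times$ is eventually constant with identity restriction maps (the paper phrases this as $H^1(\Gamma;\pi_1\circ\StMod)\cong H^1(|\Gamma|;k^\times)=0$, which is the same Mittag-Leffler/telescoping observation you make). The only cosmetic slip is your aside that $p\mid |G_n|$ for $n\geq r$ "in a $p$-group context" — this is just Lagrange's theorem applied to $G_r\leq G_n$ and needs no $p$-group hypothesis.
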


In particular, note that part $(i)$ of this proposition gives us that any countable locally finite group is a group of type $\Phi$. 

\begin{proof}
The first part follows in the same fashion as  \cite[Example 3]{Ike}. For convenience we will give the construction of the tree $T$: define the vertex set $VT$ as the disjoint union of the sets  $G/G_n$ for $n\geq1$. The edges are given by the canonical maps $G/G_n\to G/G_{n+1}$, that is, if $mG_n$ is a vertex, then there is an edge from the corresponding vertex to $mG_{n+1}$.

The graph $T$ is path connected since any vertex will be mapped to the trivial coset eventually. Moreover, it is clear that there are no loops, hence $T$ is a tree. The action of $G$ on the tree $T$ is induced by the action of $G$ on $G/G_n$ by multiplication. For an edge $(mG_n,mG_{n+1})$, the action is given by $$g\cdot(mG_n,mG_{n+1})=((gm)G_n,(gm)G_{n+1})$$ for $g\in G$. Note that the stabilizer of the vertex $mG_n$ is isomorphic to $G_n$.

For the second part, note that the fundamental domain for the action of $G$ on $T$ corresponds to a ray. Then the associated graph of groups  $\Gamma\to \mathrm{Gps}$  can be depicted as follows.

\centerline{\xymatrix{ 
\ddots \ar@{<-}[rd]^{i}  & &  G_2 \ar@{<-}[rd]^{i} \ar@{<-}[ld]_{\textrm{Id}}  & & G_1 \ar@{<-}[ld]_{\textrm{Id}}   \\ & G_2   & &  G_1  &  
}
}

\noindent where $i$ denotes the inclusion $G_n\to G_{n+1}$. Then the diagram $$\StMod(-)\colon \Gamma^{\mathrm{op}}\to \mathrm{Cat}^\otimes_\infty$$ can be depicted by $$...\xrightarrow{\mathrm{Res}}\StMod(kG_3)\xrightarrow{\mathrm{Res}}\StMod(kG_2)\xrightarrow{\mathrm{Res}}\StMod(kG_1)$$ 
By Corollary \ref{exact sequence for picard groups}, we have the following exact sequence of abelian groups. 

$$0\to H^1(\Gamma;\pi_1\StMod)\to T(G) \to H^0(\Gamma;\pi_0\StMod)\to 0$$ 
Note that $\pi_1\StMod(kG_n)\cong\widehat{\textrm{Aut}}_{G_n}(k)\cong k^\times$ for all $n\geq r$, thus $\pi_1\StMod$ is eventually constant. Hence we have that $H^1(\Gamma, \pi_1 \StMod))\cong H^1(|\Gamma|,k^\times)=0$. On the other hand, recall that $$H^0(\Gamma;\pi_0\StMod)=\varprojlim \pi_0(\StMod(kG_n))= \varprojlim T(G_n)$$ and the result follows. 
\end{proof}

We will first consider the case when the group is artinian, that is, if its subgroups satisfy the descending
chain condition. 

\begin{definition}
A group $P$ is called \textit{a discrete $p$-toral group} if it fits in an extension  $$1\to K \to P \to S\to 1$$ where $K$ is isomorphic to a finite product of copies of $\mathbb{Z}/p^\infty$ and $S$ is a finite $p$-group. 
\end{definition}

Note that  $\mathbb{Z}/p^\infty$ is an artinian locally finite $p$-group. Since these properties are preserved by finite products and finite extensions, we deduce that any discrete $p$-toral group is an artinian locally finite $p$-group. The converse also holds (see \cite[Proposition 1.2]{BLO}). Hence a group is a locally finite $p$-group if and only if it is a discrete $p$-toral group. As a  consequence of this characterization, it follows that the class of discrete $p$-toral groups is closed under subgroups, quotients and extensions by  discrete $p$-toral groups. Moreover, if  $P$ is a discrete $p$-toral group, then it contains finitely many conjugacy classes of elementary abelian $p$-subgroups and  finitely many conjugacy classes of subgroups of order $p^n$ for $n\geq0$ (see \cite[Lemma 1.4]{BLO}).

We will use these properties of discrete $p$-toral groups and the description of the restriction maps in the case of finite $p$-groups to determine $T(P)$ in terms of the $P$--conjugacy classes of maximal elementary abelian subgroups of $P$ of rank 2. The result will be analogous to the case of finite $p$-groups; for almost all the cases the group of invertible modules is an abelian free group. We shall deal separately with  discrete $p$-toral groups that admit a tower of cyclic, dihedral, semidihedral or quaternion groups. We will describe these cases first.

\begin{proposition}\label{Prop 5.7}
The following hold. 
\begin{itemize}
    \item[(a)] Let $P=\mathbb{Z}/p^\infty$. Then $T(P)\cong \mathbb{Z}/2$. 
    \item[(b)] Let $D_{2^{\infty}}=\bigcup D_{2^n}$, where $D_{2^n}$ denotes the dihedral group of order $2^n$, for $n\geq 3$. Then $T(D_{2^\infty})\cong \mathbb{Z}$.
    \item[(c)] Let $Q_{2^\infty}=\bigcup Q_{2^n}$, where $Q_{2^n}$ denotes the generalized quaternion group of order $2^n$, for $n\geq 3$. Then $T(Q_{2^\infty})\cong \mathbb{Z}/4$.  
\end{itemize}

\end{proposition}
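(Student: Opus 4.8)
The plan is to compute each of the three Picard groups by applying Corollary \ref{exact sequence for picard groups} to an explicit tower, reducing the infinite computation to the inverse limit $\varprojlim T(G_n)$ via Proposition \ref{prop toral}. In each case $p$ divides the order of the relevant finite groups, so part (ii) of Proposition \ref{prop toral} applies and gives
\[
T(P)\cong \varprojlim_n T(G_n),
\]
where the maps are restrictions along the inclusions $G_n\hookrightarrow G_{n+1}$. Thus the entire problem becomes: identify the finite groups $T(G_n)$ using the classification of endotrivial modules for finite $p$-groups (\cite{CT04},\cite{CT05}), and then understand the restriction homomorphisms $T(G_{n+1})\to T(G_n)$ well enough to pass to the limit. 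First I would record, for each family, the known value of $T$ on the finite members of the tower.

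For part (a), $P=\mathbb{Z}/p^\infty=\bigcup \mathbb{Z}/p^n$. For a cyclic $p$-group of order $p^n$ with $n\geq 2$, the group of endotrivial modules is $T(\mathbb{Z}/p^n)\cong\mathbb{Z}/2$, generated by the class of $\Omega k$, while for $\mathbb{Z}/p$ (when $p$ is odd) it is also $\mathbb{Z}/2$; the restriction map $\mathbb{Z}/2\to\mathbb{Z}/2$ between successive cyclic groups sends $\Omega k$ to $\Omega k$, hence is the identity. So the inverse system is eventually the constant system $\mathbb{Z}/2$ with identity maps, and $\varprojlim T(\mathbb{Z}/p^n)\cong\mathbb{Z}/2$. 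For part (c), the generalized quaternion groups $Q_{2^n}$ satisfy $T(Q_{2^n})\cong\mathbb{Z}/4$ (this is one of the exceptional cases in the Carlson--Th\'evenaz classification, arising because these groups have periodic cohomology of period dividing $4$), and again the restriction maps along $Q_{2^n}\hookrightarrow Q_{2^{n+1}}$ are isomorphisms $\mathbb{Z}/4\to\mathbb{Z}/4$, so the limit is $\mathbb{Z}/4$. For part (b), the dihedral groups $D_{2^n}$ with $n\geq 3$ have $T(D_{2^n})\cong\mathbb{Z}^2$ when there are two conjugacy classes of maximal elementary abelian subgroups of rank $2$; here I would need to track how the restriction maps behave, and argue that $\varprojlim\mathbb{Z}^2\cong\mathbb{Z}$, reflecting that under the inclusions the two classes of rank-$2$ elementary abelians get fused in the colimit $D_{2^\infty}$ so that only a single rank-one contribution survives.

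The technical heart of the argument, and the step I expect to be the main obstacle, is the precise determination of the restriction maps $T(G_{n+1})\to T(G_n)$ and the resulting inverse limit, particularly in the dihedral case (b). For (a) and (c) the finite groups are ``rank-one'' phenomena where $T$ is finite and the restriction maps are isomorphisms, so the limit is immediate; the real work is confirming that the restrictions are indeed isomorphisms rather than, say, multiplication by $p$ or zero, which I would verify using the description of $T$ in terms of $\Omega k$ and the fact that restriction commutes with the syzygy functor. For (b) the groups $T(D_{2^n})$ are infinite and free, so I must compute the maps $\mathbb{Z}^2\to\mathbb{Z}^2$ explicitly on the free generators (given by $\Omega k$ together with the classes associated to the two maximal rank-$2$ elementary abelian subgroups), determine the structure of the tower of free abelian groups, and identify the inverse limit as $\mathbb{Z}$. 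The key structural input is the behaviour of the maximal elementary abelian subgroups: in $D_{2^n}$ there are two conjugacy classes of Klein four subgroups, but the inclusion $D_{2^n}\hookrightarrow D_{2^{n+1}}$ and the conjugation action in the ambient $D_{2^\infty}$ collapse these, so I would carefully analyze the induced maps on the relevant invariants to see that precisely one infinite cyclic summand persists through the limit.
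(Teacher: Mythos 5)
Your overall framework is the same as the paper's: invoke Proposition \ref{prop toral}(ii) to identify $T(P)$ with $\varprojlim_n T(G_n)$ along restriction maps, then feed in the Carlson--Th\'evenaz/Dade classification for the finite stages. Parts (a) is fine as stated. But there is a genuine error in part (c) and a substantive gap in part (b).

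In part (c) your starting point is factually wrong: for generalized quaternion groups one has $T(Q_{2^n})\cong \mathbb{Z}/4\oplus\mathbb{Z}/2$, not $\mathbb{Z}/4$. Periodicity of cohomology gives the $\mathbb{Z}/4$ generated by $\Omega k$, but there is an additional exotic endotrivial module $\Omega^1(L)$ contributing a $\mathbb{Z}/2$ summand. Consequently the restriction maps $T(Q_{2^{n+1}})\to T(Q_{2^n})$ are \emph{not} isomorphisms, and your argument (``the restrictions are isomorphisms because restriction commutes with $\Omega$'') cannot be repaired by that observation alone: it says nothing about where the exotic generator goes. The paper's proof is precisely about this point: the exotic generator of $T(Q_{2^{n+1}})$ is detected on the two conjugacy classes of quaternion subgroups of order $8$, and since the two classes of such subgroups of $Q_{2^n}$ become conjugate inside $Q_{2^{n+1}}$, the detection theorem forces the restriction to land in the cyclic subgroup $\langle \Omega_{Q_{2^n}}\rangle$ and to kill the exotic summand. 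The inverse limit is then $\mathbb{Z}/4$, but only because the $\mathbb{Z}/2$ factors die in the tower --- your answer agrees with the paper's by coincidence, not because your argument establishes it.

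In part (b) your heuristic is the right one --- the two classes of Klein four subgroups of $D_{2^{n-1}}$ fuse in $D_{2^n}$ --- but you explicitly defer the computation of the maps $\mathbb{Z}^2\to\mathbb{Z}^2$, which is the entire content of the proof. The paper carries it out: writing $T(D_{2^n})=\langle \Omega_{D_{2^n}},[L]\rangle$, the fusion of the two Klein four classes together with the detection theorem forces
$$\mathrm{Res}\colon T(D_{2^n})\to T(D_{2^{n-1}}),\qquad m\Omega_{D_{2^n}}+n[L]\mapsto (m-n)\,\Omega_{D_{2^{n-1}}},$$
a surjective, non-injective map whose inverse limit is $\mathbb{Z}$. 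Without pinning down such an explicit formula (in particular, checking that the second generator restricts into $\langle\Omega\rangle$ rather than hitting the second generator downstairs), the claim $\varprojlim \mathbb{Z}^2\cong\mathbb{Z}$ is unsupported, since towers of $\mathbb{Z}^2$'s can have limit $\mathbb{Z}^2$, $\mathbb{Z}$, or $0$ depending on the maps.
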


\begin{proof}

    $(a)$ In this case  $P$ admits a tower of cyclic groups $$\mathbb{Z}/p\leq \mathbb{Z}/{p^2}\leq...$$ Since $T(\mathbb{Z}/{2})$ is trivial and $T(\mathbb{Z}/{p^m})\cong \mathbb{Z}/2$ generated by $[\Omega(k)]$, for $p^m>2$ (see \cite[Corollary 8.8]{Dad78b}), we deduce that the restriction map $\mathrm{Res}\colon T(\mathbb{Z}/{p^{m+1}})\to T(\mathbb{Z}/{p^{m}})$ is the identity, for all $m>1$. Hence the  inverse system $(T(\mathbb{Z}/{p^m}),\mathrm{Res})$ is eventually constant. Then $T(P)$ is isomorphic to $\mathbb{Z}/2$. \\
    
\noindent    $(b)$ Fix the following presentation for the dihedral group $D_{2^n}$. $$\langle r,s\mid r^{2^{n-1}}=s^2=(sr)^2=1\rangle$$ Then we will consider $D_{2^{n-1}}$ as the subgroup of $D_{2^n}$ generated by $r^2$ and $s$. 
    
    Recall that $T(D_{2^n})=\langle \Omega_{D_{2^n}},[L]\rangle\cong\mathbb{Z}^2$ (see \cite[Section 5]{CT00}).  Let $\Omega_{D_{2^{n-1}}},[L']$ be the standard generators of $T(D_{2^{n-1}})$. It is clear that $$\mathrm{Res}([\Omega_{D_{2^n}}])=[\Omega_{D_{2^{n-1}}}].$$ Set $F=\langle r^2s, r^{2^{n-2}}\rangle$ and   $F'=\langle s, r^{2^{n-2}}\rangle$, which are  representatives of the two conjugacy classes of maximal  elementary abelian subgroups of $D_{2^{n-1}}$. Since $F$ and $F'$ are conjugate in $D_{2^{n}}$, we have 
    \[
\begin{split}
 \mathrm{Res}\colon T(D_{2^n})\to & T(F)\oplus T(F') \\
[L]\mapsto & (-\Omega_F,-\Omega_{F'})
\end{split}
\]
    
\noindent (see \cite[Theorem 5.4]{CT00}). The map $\mathrm{Res}\colon T(D_{2^n})\to T(F)\oplus T(F')$ factors through $T(D_{2^{n-1}})$  so we have a commutative triangle

\centerline{
\xymatrix{  T(D_{2^n}) \ar[r] \ar[rd] & T(D_{2^{n-1}}) \ar[d]  \\ & T(F)\oplus T(F')   }}

\noindent and by the detection theorem (see \cite[Conjecture 2.6]{CT00}), the vertical map is injective. We deduce that   
\[
\begin{split}
 \mathrm{Res}\colon T(D_{2^{n}}) \to & T(D_{2^{n-1}}) \\
m\Omega_{D_{2^n}}+n[L]\mapsto & (m-n)\Omega_{D_{2^{n-1}}}
\end{split}
\]
  Hence the result follows. \\

\noindent $(c)$ Fix the following presentation for the generalized quaternion group $Q_{2^{n+1}}$ $$\langle x,y\mid x^{2^n}=1,y^2=x^{2^{n-1}}, yxy=x^{-1}\rangle$$ We will identify $Q_{2^{n}}$ with the subgroup $\langle x^2,y\rangle$ of $Q_{2^{n+1}}$. 

Recall that $T(Q_{2^{n+1}})=\langle\Omega_{Q_{2^{n+1}}},[\Omega^1_{Q^{n+1}}(L)]\rangle\cong\mathbb{Z}/4\oplus\mathbb{Z}/2$ (see \cite[Section 6]{CT00}). Consider the restriction map $$\mathrm{Res}\colon T(Q_{2^{n+1}})\to T(H)\oplus T(H')$$ where $H=\langle x^{2^{n-2}},y\rangle $ and $H'=\langle x^{2^{n-2}},xy\rangle$ are representatives of the two conjugacy classes of quaternion subgroups of order 8. Then  $$\Omega^1_{Q_{2^{n+1}}}(L)\mapsto(2\Omega_H,0)\textrm{ or }(0,2\Omega_{H'})$$ under the previous restriction map \cite[Theorem 6.5]{CT00}.  Let $F=\langle (x^2)^{2^{n-3}}, y\rangle$ and   $F'=\langle (x^2)^{2^{n-3}}, x^2y\rangle$ be representatives of the two conjugacy classes of quaternion subgroups of $Q_{2^{n}}$ of order 8. Note that $F$ and $F'$ are $Q_{2^{n+1}}$-conjugate, hence 
\[
\begin{split}
 \mathrm{Res}\colon T(Q_{2^{n+1}})\to & T(H)\oplus T(H') \\
\Omega_{Q_{2^{n+1}}}\mapsto &(\Omega_F,\Omega_{F'})\\
[\Omega_{Q_{2^{n+1}}}^1(L)]\mapsto & (0,0) \textrm{ or } (2\Omega_F,2\Omega_{F'})
\end{split}
\]
 Since the map $\mathrm{Res}\colon T(Q_{2^{n+1}})\to T(F)\oplus T(F')$ factors through $T(Q_{2^n})$ we have a commutative diagram

\centerline{
\xymatrix{    T(Q_{2^{n+1}}) \ar[r] \ar[rd] & T(Q_{2^{n}}) \ar[d]    \\ & T(H)\oplus T(H')   }}

\noindent and by the detection theorem, the vertical arrow is injective. We deduce that
\[
\begin{split}
 \mathrm{Res}\colon T(Q_{2^{n+1}})\to & T(Q_{2^n}) \\
(m\Omega_{Q_{2^{n+1}}},n[\Omega^1_{Q^{n+1}}(L)])\mapsto & ((n+2m)\Omega_{Q^{2^n}},0) \textrm{ or } \\ &  (m\Omega_{Q^{2^n}},0)
\end{split}
\]

\noindent Therefore $\displaystyle \varprojlim T(Q_{2^n})\cong\mathbb{Z}/4$.
\end{proof}

\begin{remark}
The maximal subgroups of a  semi-dihedral group are generalized quaternion, dihedral and cyclic groups and none of them contain a semi-dihedral group as a subgroup (see \cite[Theorem 4.3]{Gor80}).  Then a locally finite group that admits a tower of semi-dihedral groups is a semi-dihedral group, hence a finite group.
 
\end{remark}

\begin{proposition}\label{Prop 5.9}
Let $P$ be a discrete $p$-toral group that admits a tower  $Q_1\leq Q_2\leq ...$ such that $Q_n$ is not  cyclic, dihedral, semi-dihedral or quaternion for all $n\geq1$. Then
$$
T(P) = \left\{
        \begin{array}{ll}
           
            \mathbb{Z}^r & \textrm{ if } P \textrm{ has }p\textrm{-rank  at most 2}\\
            \mathbb{Z}^{r+1} & \textrm{ if } P \textrm{ has }p\textrm{-rank  at least 3}
        \end{array}
    \right.
$$
  
\noindent where $r$ is the number of conjugacy classes of maximal elementary abelian subgroups of $P$ of rank 2.  
\end{proposition}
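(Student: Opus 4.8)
The plan is to reduce the computation of $T(P)$ to an inverse limit of the Picard groups of the finite subgroups in the tower, and then to identify that limit using the Carlson--Th\'evenaz classification of endotrivial modules together with the finiteness properties of discrete $p$-toral groups. First I would apply Proposition \ref{prop toral}(ii): since each $Q_n$ is a nontrivial finite $p$-group, $p$ divides $|Q_n|$, so the hypothesis is met and we obtain
$$T(P)\cong \varprojlim_{n} T(Q_n),$$
the transition maps being the restriction maps $\Res\colon T(Q_{n+1})\to T(Q_n)$. Thus everything reduces to understanding this inverse system.

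Next I would record the structure of each term. Because $Q_n$ is a finite $p$-group that is neither cyclic, dihedral, semi-dihedral nor quaternion, the Carlson--Th\'evenaz classification (\cite{CT04},\cite{CT05}) shows that $T(Q_n)$ is torsion-free: among finite $p$-groups only the cyclic, quaternion and semi-dihedral ones carry nonzero torsion in $T$, and all of these are excluded. Moreover its rank is the number $r_n$ of $Q_n$-conjugacy classes of maximal elementary abelian subgroups of rank $2$, augmented by $1$ when $Q_n$ has $p$-rank at least $3$, the extra free generator being the class of $\Omega_{Q_n}k$. Hence $T(Q_n)\cong \mathbb{Z}^{r_n}$ or $\mathbb{Z}^{r_n+1}$ according to the rank of $Q_n$.

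The heart of the argument is to show that these invariants stabilize and that the restriction maps are eventually isomorphisms. Here I would use that $P$, being discrete $p$-toral, has only finitely many conjugacy classes of elementary abelian subgroups (\cite[Lemma 1.4]{BLO}), each of which, being finite, lies in some $Q_n$. I would then choose $N$ large enough that $Q_N$ realizes the $p$-rank of $P$ and contains a representative of every $P$-conjugacy class of maximal rank-$2$ elementary abelian subgroup of $P$. If two such subgroups of $Q_N$ become $P$-conjugate, the conjugating element lies in $P=\bigcup_m Q_m$, hence in some $Q_m$ with $m\ge N$, so they are already $Q_m$-conjugate; as there are finitely many classes, for $n$ beyond some $N'\ge N$ the $Q_n$-conjugacy relation on these subgroups agrees with $P$-conjugacy, giving $r_n=r$ and a constant ``$+1$'' contribution. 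For such $n$ the restriction maps send $\Omega_{Q_{n+1}}k$ to $\Omega_{Q_n}k$ and match the generators attached to maximal rank-$2$ subgroups bijectively, so they are isomorphisms, and the inverse limit equals this common value $\mathbb{Z}^r$ or $\mathbb{Z}^{r+1}$.

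The main obstacle, and the reason the four families are excluded, is precisely this stabilization of the restriction maps. For towers of dihedral, semi-dihedral or quaternion groups the two conjugacy classes of maximal rank-$2$ elementary abelians fuse as one passes up the tower, exactly as in the explicit computations of Proposition \ref{Prop 5.7}, so the restriction maps fail to be isomorphisms and the limit collapses. Ensuring that no such persistent fusion or rank jump occurs for the admissible towers --- equivalently, that the Carlson--Th\'evenaz generators of $T(Q_{n+1})$ restrict to a basis of $T(Q_n)$ for large $n$ --- is where the finiteness of conjugacy classes of elementary abelians and the exclusion of the maximal-class families do the real work.
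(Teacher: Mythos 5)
Your first two steps match the paper: the reduction $T(P)\cong\varprojlim_n T(Q_n)$ via Proposition \ref{prop toral}(ii), and the identification of each $T(Q_n)$ as a free abelian group whose rank is governed by the components of the poset of elementary abelian subgroups of rank at least $2$. The gap is in what you call the heart of the argument. Your claim that $r_n=r$ for large $n$ and that the restriction maps are eventually isomorphisms is false, and the argument you give does not establish it: it only shows that the finitely many maximal rank-$2$ elementary abelian subgroups lying inside the \emph{fixed} group $Q_N$ have their $Q_n$-fusion stabilize to $P$-fusion. It says nothing about maximal rank-$2$ elementary abelian subgroups of $Q_n$ that are not $Q_n$-conjugate into $Q_N$, and such classes can appear at every stage and fuse only later. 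Concretely, take $p=3$, let $A\in\mathrm{GL}_2(\mathbb{Z}_3)$ be the companion matrix of $x^2+x+1$ (so $A^3=I$ and $1+A+A^2=0$), and set $P=(\mathbb{Z}/3^\infty)^2\rtimes\langle A\rangle$ with tower $Q_n=(\mathbb{Z}/3^n)^2\rtimes\langle A\rangle$. Every element outside the torus has order $3$, and the maximal elementary abelian subgroups are the $3$-torsion $T_1$ of the torus together with the groups $E_t=\langle(t,A)\rangle\times F$, where $F=\ker(1-A)\cong\mathbb{Z}/3$. Since $\det(1-A)=3$, the map $1-A$ is surjective on the divisible torus but has image of index $3$ in each $(\mathbb{Z}/3^n)^2$; consequently all the $E_t$ are $P$-conjugate (so $r=2$), while every $Q_n$ has three conjugacy classes of the $E_t$, the classes present at stage $n$ fusing only at stage $n+1$ as new ones appear. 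So each $Q_n$ (which is a noncyclic $3$-group, hence satisfies the hypotheses) has four classes of maximal rank-$2$ elementary abelians, $T(Q_n)\cong\mathbb{Z}^4$ for all $n$, yet $T(P)\cong\mathbb{Z}^2$: the transition maps are never isomorphisms, and no tail of the inverse system is constant.

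This is exactly why the paper's proof is organized differently. Instead of stabilizing the system, it fixes representatives $E_1,\dots,E_r$ of the $P$-classes (plus the big component $E_0$ when the $p$-rank is at least $3$), takes the Alperin/Carlson--Th\'evenaz generators $N^n_i$ of $T(Q_n)$ attached to \emph{all} components of $\mathcal{E}_{\geq 2}(Q_n)/Q_n$ (see \cite[Section 3.3]{Maz}), and then proves two statements about the possibly never-stabilizing system: (i) after passing to a subtower, every ``extra'' class of $Q_n$ fuses inside $Q_{n+1}$ into one of the $r+1$ stable components, so by the detection theorem the restriction $T(Q_{n+1})\to T(Q_n)$ annihilates the extra generators; and (ii) the centralizer quotients $C_{Q_n}(u_i)/\langle u_i\rangle$ stabilize up the tower, so restriction carries stable generators to stable generators. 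The inverse limit is then computed directly to be $\mathbb{Z}^{r+1}$ (resp.\ $\mathbb{Z}^r$), spanned by the stable generators, with the extra coordinates contributing nothing. Your closing paragraph also misdiagnoses the role of the excluded families: persistent fusion is perfectly compatible with the hypotheses of the proposition (as the example above shows) and does not make the limit ``collapse'' incorrectly; the cyclic, semi-dihedral and quaternion groups are excluded because their groups of endotrivial modules have torsion, and the dihedral groups because their generators are not of the $N_i$ form used in the detection argument.
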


\begin{proof}

Since $Q_n$ is not cyclic, semi-dihedral or generalized quaternion for all $n\geq1$, we have that  $T(Q_n)$ is a free abelian group and its rank is determined by the connected components of $\mathcal{E}_{\geq2}(Q_n)/Q_n$, the poset of $Q_n$--orbits of elementary abelian subgroups of $p$-rank at least 2.  Recall that we have a finite number of $P$--conjugacy classes of maximal elementary abelian subgroups of rank 2. Fix  representatives  $E_1,...,E_r$ of these classes. We can assume  that $E_i$ is a subgroup of $Q_1$ and has the form $E_i=\langle u_i\rangle\times Z$, where $Z$ is the unique central subgroup of $Q_1$ of order $p$ and $\langle u_i\rangle$ is a non-central subgroup  of $Q_1$ of order $p$, for $1\leq i\leq r$ (see \cite[Section 3.3]{Maz}).  

Suppose that $P$ has $p$-rank at least 3. Hence we can assume that $Q_1$ has $p$-rank at least 3. For $n\geq1$,  choose elementary abelian subgroups $E_0^n,...,E_{r+s_n}^n$    of rank 2 which are representatives of the connected components of $\mathcal{E}_{\geq2}(Q_n)/Q_n$. We can assume that $E^n_0=E_0$ for a fixed elementary abelian subgroup in the big component of $\mathcal{E}_{\geq2}(Q_1)/Q_1$, that is, the connected component that contains all the elementary abelian subgroups of $Q_1$ of rank at least 3. 

We can assume that $E_i^n=E_i$ for $1\leq i\leq r$, and  $E_i^n=Z\times\langle u^n_i\rangle$ for a non-central subgroup $\langle u^n_i\rangle$ of $Q_n$ of order $p$, for $r+1\leq i\leq r+s_n$. Then there exist endotrivial modules $N_1^n,..,N_{r+s_n}^n$ such that
$$\mathrm{Res}_{E^n_j}^{Q_n}(N^n_i)\cong\left\{
        \begin{array}{ll}
           
            k\oplus(\textrm{proj}) & \textrm{ if } i\not=j,\\[7pt]
            \Omega_{E_j^n}^{-2p}(k)\oplus (\textrm{proj}) & \textrm{ if } i=j \textrm{ and } C_{Q_n}(u_i^n)/\langle u_i^n\rangle \textrm{ is cyclic of order at least 3,}\\[7pt]
        \Omega_{E_j^n}^{-2}(k)\oplus (\textrm{proj}) & \textrm{ if } i=j \textrm{ and } C_{Q_n}(u_i^n)/\langle u_i^n\rangle \textrm{ has order 2,}\\[7
        pt]
   \Omega_{E_j^n}^{-8}(k)\oplus (\textrm{proj}) & \textrm{ if } i=j \textrm{ and } C_{Q_n}(u_i^n)/\langle u_i^n\rangle \textrm{ is quaternion.}
        \end{array}
    \right.$$
\noindent for $0\leq j \leq r+s_n$ and $1\leq i \leq r+s_n$ (see \cite[Section 3.3]{Maz}).

Since we have only $r$ classes of $P$--conjugation of maximal elementary abelian subgroups or rank 2, the subgroup $E_i^n$ must be in the same $Q_m$--orbit of some $E_j$ in $\mathcal{E}_{\geq2}(Q_m)/Q_m$ for some $m\geq n$, and some $j=0,...,r$. We can suppose  that this holds for $m=n+1$. In particular, we have that $$\mathrm{Res}^{Q_{n+1}}_{E^n_j}([N^{n+1}_i])=[k]$$ for $r< i\leq r+ s_{m+1}$ and $0\leq j \leq r+ s_m$. Then it follows that $\mathrm{Res}^{Q_{n+1}}_{Q_n}$ is trivial on the $N^n_j$-components for $r< j\leq r+s_n$. On the other hand, note that we can find a large enough $k$  such that $$C_{Q_n}(u_i^n)/\langle u_i^n\rangle\cong C_{Q_{n+1}}(u_i^{n+1})/\langle u_i^{n+1}\rangle$$ for all $n\geq k$ since $C_{Q_{n}}(u_i)\leq C_{Q_{n+1}}(u_i)$. We can suppose that $k=1$. Then we have that
$\mathrm{Res}^{Q_{n+1}}_{Q_n}[N^{n+1}_j]=[N^n_j]$ for $0\leq j\leq r$.

 For $j\geq1$, define $\pi_j\colon\mathbb{Z}^{r+1}\to T(Q_j)$ as the inclusion on the generators $N^n_j$ for $0\leq j\leq r$. It is straightforward to show that $(\mathbb{Z}^{r+1},\pi_i)$ is the limit of the inverse system $\{ T(Q_n) \}$.  Then the result holds.  The case where $P$ has $p$-rank at most 2 is analogous. 
\end{proof}

The following result is analogous to the description of finite $p$-groups whose group of endotrivial modules is infinite cyclic. For abelian $p$-groups this  is precisely the main theorem given by Dade in \cite{Dad78b}. See also \cite[Theorem 3.5]{Maz}.

\begin{corollary}\label{10} Let $P$ be a discrete $p$-toral group. If one of the following conditions holds, then $T(P)\cong \mathbb{Z}$.

\begin{itemize}
\item[$(1)$] $P$ is an abelian group of $p$-rank at least 2.

\item[$(2)$] $P$ has $p$-rank at least $p+1$ if $p$ is odd or at least 5 if $p=2$.
\end{itemize}

\end{corollary}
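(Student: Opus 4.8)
The plan is to deduce both cases from Proposition \ref{Prop 5.9}, so the work splits into (i) verifying the tower hypothesis and (ii) computing the invariant $r$, the number of $P$-conjugacy classes of maximal elementary abelian subgroups of rank $2$. I would first check the tower hypothesis. Under condition $(2)$ the $p$-rank of $P$ is at least $p+1\geq 4$ (for $p$ odd) or at least $5$ (for $p=2$), hence at least $3$; since cyclic, dihedral, semi-dihedral and generalized quaternion groups all have $p$-rank at most $2$, any finite subgroup containing a fixed elementary abelian subgroup of rank $\geq 3$ is none of these, so after discarding finitely many initial terms $P$ admits an admissible tower $Q_1\leq Q_2\leq\cdots$. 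Under condition $(1)$ the group $P$ is abelian and non-cyclic, so the only obstruction is the case $p=2$ with $Q_n\cong\mathbb{Z}/2\times\mathbb{Z}/2$; the exceptional group $V_4$ itself is handled by the classical computation $T(V_4)\cong\mathbb{Z}$, and in every other case one chooses the $Q_n$ to contain an element of order at least $4$ or to have rank at least $3$, again producing an admissible tower.

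For case $(1)$ the computation of $r$ is immediate from the fact that an abelian $p$-group has a \emph{unique} maximal elementary abelian subgroup, namely $\Omega_1(P)=\{x\in P: x^p=1\}$, whose rank equals the $p$-rank of $P$. If $P$ has $p$-rank exactly $2$, then $\Omega_1(P)$ is the unique maximal elementary abelian subgroup of rank $2$, so $r=1$ and Proposition \ref{Prop 5.9} gives $T(P)\cong\mathbb{Z}^{1}=\mathbb{Z}$. If $P$ has $p$-rank at least $3$, then the unique maximal elementary abelian subgroup has rank at least $3$, there is no maximal one of rank $2$, so $r=0$ and $T(P)\cong\mathbb{Z}^{0+1}=\mathbb{Z}$. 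This recovers Dade's theorem \cite{Dad78b} for abelian $p$-groups.

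For case $(2)$ the conditions force $p$-rank at least $3$, so by Proposition \ref{Prop 5.9} it suffices to prove $r=0$, i.e.\ that $P$ has no maximal elementary abelian subgroup of rank $2$. I would argue by reduction to the finite subgroups: if $E\leq P$ were a maximal elementary abelian subgroup of rank $2$, then any elementary abelian subgroup of a finite subgroup $Q\leq P$ containing $E$ and containing $E$ would already be $E$, so $E$ would remain a maximal elementary abelian subgroup of rank $2$ in every such $Q$. Choosing $Q=Q_n$ with $E\leq Q_n$ and $p$-rank at least $p+1$ (resp.\ $5$) contradicts the classical fact from finite $p$-group theory---used in the Carlson--Th\'evenaz classification \cite{CT04},\cite{CT05} and recorded in \cite{Maz}---that a finite $p$-group of $p$-rank at least $p+1$ (for $p$ odd) or at least $5$ (for $p=2$) has no maximal elementary abelian subgroup of rank $2$. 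Hence $r=0$ and $T(P)\cong\mathbb{Z}$.

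The main obstacle is precisely this last group-theoretic input. The natural entry point is to observe that if $E$ is a maximal elementary abelian subgroup of rank $2$, then every element of order $p$ in $C_P(E)$ must already lie in $E$ (otherwise it would enlarge $E$), so $\Omega_1(C_P(E))=E$ and $C_P(E)$ has $p$-rank $2$; moreover $N_P(E)/C_P(E)$ embeds in $\mathrm{GL}_2(\mathbb{F}_p)$, whose Sylow $p$-subgroup is cyclic of order $p$. The delicate point, and where the specific thresholds $p+1$ and $5$ genuinely come from, is passing from this bound on $C_P(E)$ and $N_P(E)$ to a bound on the $p$-rank of $P$ itself, which requires the structure theory of finite $p$-groups whose elements of order $p$ are confined to a rank-$2$ subgroup. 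I expect this bookkeeping, rather than the homotopical input, to be the hard part; everything else reduces to a direct application of Proposition \ref{Prop 5.9} together with the uniqueness of $\Omega_1$ in the abelian case.
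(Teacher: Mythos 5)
Your proof is correct, but it follows a genuinely different route from the paper's. The paper does not invoke Proposition \ref{Prop 5.9} at all: it applies Proposition \ref{prop toral}(ii) to write $T(P)\cong\varprojlim T(Q_n)$ over a tower, and then quotes the finite-group classifications directly --- \cite[Theorem 10.1]{Dad78b} for case $(1)$ and \cite[Theorem 3.5]{Maz} for case $(2)$ --- which say that each $T(Q_n)$ is infinite cyclic generated by the class of $\Omega_{Q_n}(k)$; since restriction carries $\Omega$ to $\Omega$, the inverse system is constant with identity maps and the limit is $\mathbb{Z}$. You instead verify the tower hypothesis of Proposition \ref{Prop 5.9} (including the $V_4$ edge case, which the paper's route never confronts because Dade's theorem covers $V_4$) and then compute the invariant $r$: via uniqueness of $\Omega_1(P)$ in the abelian case, and in case $(2)$ via your (correct) observation that a maximal elementary abelian subgroup of rank $2$ in $P$ stays maximal in every finite subgroup containing it, combined with the rank bound for finite $p$-groups possessing such a subgroup (rank at most $p$ for $p$ odd, by Glauberman--Mazza, and at most $4$ for $p=2$). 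Both arguments ultimately rest on the same finite $p$-group theory; what yours buys is an explanation of where the thresholds $p+1$ and $5$ genuinely come from, and a derivation of the corollary as a consequence of the paper's structural Proposition \ref{Prop 5.9}, at the cost of extra bookkeeping (the $r=1$ versus $r=0$ case distinction, the excluded group types in the tower) and of a group-theoretic input that the paper simply absorbs into the citation of \cite[Theorem 3.5]{Maz}, whose proof encapsulates that same fact. The paper's route is shorter and uniform across both cases.
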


\begin{proof}
If $(1)$ holds, then $P$ admits a tower of $p$-abelian groups of rank at least 2. The group of endotrivial modules of such groups is infinite cyclic by \cite[Theorem 10.1]{Dad78b}, and we deduce that  the restriction maps are all the identity. If $(2)$ holds, the result follows in a similar fashion  by  \cite[Theorem 3.5]{Maz}. 
\end{proof}

Let $P$ be a discrete $p$-toral group. Let  $P_1\leq P_2\leq... $ be a tower for $P$. If $P_n$   is cyclic, dihedral, semi-dihedral or quaternion for just a finite number of $n$, then we can always consider ignore the first few terms and re-index the tower  so that $P$ satisfies the hypothesis of Proposition \ref{Prop 5.9}. If $P_n$ is cyclic, dihedral, semi-dihedral or quaternion for an infinite number of $n$, then we can extract a tower so that all the terms are of the same type, hence $P$ would be isomorphic to one of the groups in Proposition \ref{Prop 5.7}. Thus we have covered completely the class of artinian countable locally finite $p$-groups. 

The following result completes the classification of the Picard group for the class of countable locally finite $p$-groups. 

\begin{proposition}
Let $P$ be a countable locally finite $p$-group. If $P$ is not artinian, then $T(P)\cong\mathbb{Z}$.
\end{proposition}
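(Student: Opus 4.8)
The plan is to reduce the computation to an inverse limit of the Picard groups of the finite groups in a tower for $P$, and then to show that this inverse system is eventually the constant system $\mathbb{Z}$. Since $P$ is a countable locally finite $p$-group, I would first fix a tower $P_1\leq P_2\leq\cdots$ of finite $p$-subgroups with $\bigcup_n P_n=P$. As each $P_n$ is a nontrivial finite $p$-group, $p$ divides $|P_n|$ for $n$ large, so Proposition \ref{prop toral}(ii) applies and yields $T(P)\cong\varprojlim_n T(P_n)$, the maps being induced by restriction. Because an inverse limit depends only on a cofinal tail of the system, it suffices to understand $T(P_n)$ and the restriction maps for all sufficiently large $n$.

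The decisive structural input is that a non-artinian $P$ has infinite $p$-rank. Indeed, the excerpt already records that the artinian countable locally finite $p$-groups are exactly the discrete $p$-toral groups (\cite[Proposition 1.2]{BLO}), and a discrete $p$-toral group, being an extension of $(\mathbb{Z}/p^\infty)^d$ by a finite $p$-group, has finite $p$-rank. Conversely, a locally finite $p$-group of finite rank is Chernikov, hence artinian (see \cite{KW}). Thus $P$ not artinian forces the $p$-rank of $P$ to be infinite. Since $P_n\leq P_{n+1}$, the ranks $\mathrm{rk}_p(P_n)$ are non-decreasing, and as every finite elementary abelian subgroup of $P$ lies in some $P_n$, they tend to infinity. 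In particular there is an $N$ such that for all $n\geq N$ we have $\mathrm{rk}_p(P_n)\geq p+1$ when $p$ is odd and $\mathrm{rk}_p(P_n)\geq 5$ when $p=2$.

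For such $n$ I would invoke Corollary \ref{10}(2): a finite $p$-group is (trivially) discrete $p$-toral, so each $P_n$ with $n\geq N$ satisfies $T(P_n)\cong\mathbb{Z}$, generated by the class of the syzygy $[\Omega_{P_n}(k)]$. The restriction functor respects syzygies up to projectives, so $\mathrm{Res}^{P_{n+1}}_{P_n}([\Omega_{P_{n+1}}(k)])=[\Omega_{P_n}(k)]$; hence each transition map $T(P_{n+1})\to T(P_n)$ carries a generator to a generator and is therefore an isomorphism $\mathbb{Z}\xrightarrow{\sim}\mathbb{Z}$. Thus the tail of the system is constant with value $\mathbb{Z}$, and $T(P)\cong\varprojlim_n T(P_n)\cong\mathbb{Z}$.

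I expect the only genuinely nontrivial point to be the structural dichotomy used in the second paragraph, namely that failure of the minimal condition forces infinite $p$-rank; this is exactly the feature separating the present case from the artinian one, where the rank can remain bounded (for instance equal to $3$ or $4$) and isolated rank-$2$ elementary abelian subgroups then contribute extra free summands, as in Proposition \ref{Prop 5.9}. Once infinite rank is in hand, the rank threshold in Corollary \ref{10}(2) collapses all such contributions and the remaining argument is purely formal. A minor point to verify carefully is that $[\Omega_{P_n}(k)]$ is genuinely a \emph{generator} of $T(P_n)\cong\mathbb{Z}$, so that the transition maps are the identity after identification; this is immediate from the classification underlying Corollary \ref{10}.
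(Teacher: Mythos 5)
Your proposal is correct and takes essentially the same approach as the paper's proof: both reduce to $T(P)\cong\varprojlim_n T(P_n)$ via Proposition \ref{prop toral}, use the structure theory of locally finite $p$-groups from \cite{KW} to see that failure of the minimal condition forces elementary abelian subgroups of unbounded rank along the tower, and then apply the classification for finite $p$-groups (Corollary \ref{10}(2), i.e.\ \cite[Theorem 3.5]{Maz}) to conclude that the inverse system is eventually constant with value $\mathbb{Z}$. Your extra care in checking that restriction carries the generator $[\Omega_{P_{n+1}}(k)]$ to $[\Omega_{P_n}(k)]$ merely makes explicit what the paper glosses as ``$\{T(P_n)\}$ is constant.''
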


\begin{proof}
By Lemma 3.1 in \cite{KW}, we have that $P$ contains an infinite elementary abelian subgroup. Hence there is a tower $P_1\leq P_2\leq P_3\leq ...$ so that $P_n$ has $p$-rank at least $p+4$, for all $n\geq1$. By  \cite[Theorem 3.5]{Maz} we have $T(P_n)\cong \mathbb{Z}$, for all $n\geq1$. We deduce that  $\{T(P_n)\}$ is constant, and the result follows.  
\end{proof}

\subsection{Countable locally finite groups}

\begin{remark}
For general finite groups, the description of the group of endotrivial modules by generators and relations is not complete. Hence it is more elaborate to identify the restriction maps $\mathrm{Res}\colon T(G_{n+1})\to T(G_n)$ in an inverse system for a countable locally finite group $G$. A different approach is to study the restriction map  $\mathrm{Res}\colon T(G)\to T(S)$ where $S$ is a maximal $p$-subgroup of $G$. However, we need to be careful since there are locally finite groups with maximal $p$-subgroups which are not necessarily  isomorphic (see \cite[Section 1.D]{KW} for a discussion). 
\end{remark}

\begin{definition}
Let $G$ be a group. We say that $G$ is \textit{$p$-artinian} if any $p$-subgroup of $G$ is artinian. 
\end{definition}

Let $G$ be a $p$-artinian countable locally finite group. In this case, a maximal $p$-subgroup of $G$ plays the role of a Sylow $p$-subgroup. In particular, any two maximal $p$-subgroups of $G$ are isomorphic (see \cite[Theorem 3.7]{KW}).  Consider a tower of finite groups $G_1\leq G_2 \leq ...$ of $G$.  Assume that $p$ divides the order of $G_1$.  Set $S_1$ a $p$-Sylow subgroup of $G_1$.  For each $n\geq2$ we can find a $p$-Sylow subgroup $S_n$ of $G_n$ such that $S_{n-1}\leq S_n$. Then we obtain a ascending chain $S_n$ of finite $p$-subgroups of $G$. Then $S=\cup S_n$ is a maximal $p$-subgroup. Moreover, note that $S$ is a discrete $p$-toral group.

By Proposition \ref{prop toral} we know that $T(G)$ agrees with the projective limit of the inverse system $\{T(G_n)\}$ with maps induced by the restriction maps. We would like to give a better description of this projective limit.

For $n\geq1$ consider the restriction map $\mathrm{Res}\colon T(G_n)\to T(S_n)$ and denote its image by $\overline{T}(S_n)$  and its kernel by $T(G_n,S_n)$. Then we have a short exact sequence of abelian  groups $$0\to T(G_n,S_n)\to T(G_n)\to  \overline{T}(S_n)\to 0 .$$ 

Then we obtain a short exact sequence of inverse systems. The following sequence is exact since $T(G_n,S_n)$ is finite for all $n\geq1$, hence we can use the Mittag-Leffler condition  for the vanishing of the ${\displaystyle \varprojlim}  ^1$.  
$$0\to \displaystyle \varprojlim_{n} T(G_n,S_n)\to T(G)\to \displaystyle \varprojlim_{n} \overline{T}(S_n)\to 0.$$

We can identify $\varprojlim_{n} \overline{T}(S_n)$ as a subgroup of $T(S)$. Moreover, this group agrees with the image of the restriction map $\mathrm{Res}\colon T(G)\to T(S)$ and as a consequence we obtain that $\mathrm{Ker}(\mathrm{Res})\cong \varprojlim_{n} T(G_n,S_n)$. We will denote this group by $T(G,S)$. 

As we mentioned before, we have the inconvenience that we do not have an explicit description of the restriction maps for arbitrary finite groups as in the case of finite $p$-groups. Thus the best we can do is considering special cases of countable locally finite groups. We will conclude this section with a couple of examples where we are able to determine $T(G)$ as an abstract group.  

\begin{example}
Let $G$ be an abelian countable locally finite group that has $p$-rank at least 2. Suppose additionally that $G$ is $p$-artinian. Let $S$ be a maximal $p$-subgroup of $G$ constructed from a tower $\{S_n\}$ of Sylow $p$-subgroups as above. Since $G$ is of $p$-rank at least 2, we can prove that $T(S)$ is infinite cyclic.  Hence, we deduce that $\varprojlim \overline{T}(S_n)\cong T(S)$. On the other hand, we have that $T(G_n,S_n)\cong \Hom(G_n,k^\times)$. It follows that  $$T(G)\cong \mathbb{Z}\oplus \Hom(G,k^\times).$$

\end{example}

\begin{example}
 We say that a locally finite group $G$ is \textit{$p$-nilpotent} if any $p$-subgroup of $G$ is nilpotent.  Let $G$ be a locally finite group that is $p$-artinian and $p$-nilpotent. Then we have a tower $G_1\leq G_2\leq ...$ where $G_n$ is a finite $p$-nilpotent group for any $n\geq1$. By \cite[Theorem 3.3]{CMT11} we have a short exact sequence of abelian groups $$0\to\Hom(G_n,k^\times)\to T(G_n)\to T(S_n)\to 0$$ where $S_n$ is a Sylow $p$-subgroup of $G_n$, for $n\geq1$. Then  $$T(G)\cong \Hom(G,k^\times)\oplus T(S)$$ where $S$ is a maximal $p$-subgroup of $G$. 
\end{example}

%%%%%%%%%%%%%%%%%%%%%%%%

\subsection{Amalgam groups}\label{Subsection amalgam groups}

In this subsection, we let $G$ be a group with geometric dimension two for the family of finite groups. In particular, we are interested in the case where the fundamental domain of the action is homeomorphic to the standard 2-simplex. 

\begin{definition}
    We will say that $G$ is an \textit{amalgam group}, if it admits a 2-dimensional model $X$ for $\underline{E}G$ such that the fundamental domain of the action is homeomorphic to the standard 2-simplex. In particular, amalgam groups are groups of type $\Phi$. 
\end{definition}

Let $\mathcal{T}$ denote the barycentric subdivision of $\Delta^2$. A triangle of groups is a functor $\mathcal{T}\to \textrm{Gps}$ that can be depicted as commutative diagram of groups

\begin{center}

\begin{tikzcd}[column sep=small, row sep= small] & & A & & \\ & & & & \\   & M \arrow[ldd] \arrow[ruu] & &   L \arrow[luu] \arrow[rdd] & \\ & & N \arrow[lu]\arrow[ru]\arrow[d] & & \\ B & & K \arrow[ll] \arrow[rr] & & C \end{tikzcd}

\end{center}

\noindent where all the maps are injective maps.  The groups $A,B$ and $C$ are called vertex groups, $K,L$ and $M$ are called edge groups and $N$ is called the face group. In other words, it corresponds to a functor from the barycentric subdivision of the standard 2-simplex that associates a group to each vertex, an injective morphism to each edge, and a compatibility data for the composition on each face (see  \cite[Section 1]{FP}). 

Any amalgam group defines a triangle of groups where the groups correspond to the isotropy groups of the vertices, edges and face, and the maps are given by inclusions. However, this assignation is not bijective, there are triangles of groups that do not correspond to an amalgam group.  If the triangle of groups is non-positively curved, then the fundamental group of the triangle of groups is an amalgam group (see \cite{Hae}, \cite{Sta}).

 For an amalgam group $G$, consider the associated diagram of $\infty$-categories $\StMod (-)\colon  \mathcal{T}^\textrm{op}\to \widehat{\textrm{Cat}}^\otimes_\infty$ obtained by composing the stable module $\infty$-category functor, and the corresponding triangle of groups $\mathcal{T}\to \mathrm{Gps}$ associated to $G$. In particular, the value of this functor at an element $\sigma$ of $\mathcal{T}$ is precisely $\StMod(kG_\sigma)$.   Recall that the elements in $\mathcal{T}$ are simplices in $\Delta^n$, so here $G_\sigma$ means the isotropy group of the simplex of $X$ corresponding to the simplex $\sigma$ of $\Delta^2$. 
%%----------------

\begin{proposition}\label{decomposition for EG}
  Let $G$ be an amalgam group with trivial face group. Then there is an equivalence of symmetric monoidal $\infty$-categories $$ \StMod(kG)\xrightarrow[]{\simeq} \varprojlim_{\sigma\in \mathcal{T}^\textrm{op}} \StMod(kG_\sigma). $$ 
\end{proposition}

\begin{proof}
Consider the fundamental group $\Tilde{G}$ of the graph of groups obtained from the action of $G$ on the 1-skeleton of $X$. In particular, this graph of groups is indexed by the barycentric subdivision of $\partial \Delta^2$ which, for simplicity, we will denote it  by $\mathcal{T}'$.  In this case, \cite[Theorem I.9.2]{DD89} gives us an extension of groups $$1\to N\to \Tilde{G} \xrightarrow[]{\pi} G\to 1$$ where $N$ is a free group. Hence any finite subgroup $H$ of $\Tilde{G}$ has trivial intersection with $N$, so it can be identified with a finite subgroup of $G$ under $\pi$, and any finite subgroup of $G$ can be lifted to a finite subgroup of $\Tilde{G}$. Now, we claim that the functor $\mathcal{O}_\mathscr{F}(\Tilde{G})\to \mathcal{O}_\mathscr{F}(G)$ induced by $\pi$ is cofinal. Indeed, the same argument as in Proposition \ref{cofinalidad de la inclusion de subgrupos de isotropia} works. Let $G/H$ be an element in $\mathcal{O}_\mathscr{F}(G)$. Any element $G/H\xrightarrow[]{[g]} G/K$ in $\mathcal{O}_\mathscr{F}(\Tilde{G})_{(G/H) / }$ is isomorphic to $G/H \xrightarrow[]{[1]} G/gKg^{-1}$. Hence we can identify $\mathcal{O}_\mathscr{F}(\Tilde{G})_{(G/H) / }$ with the opposite of the poset of subgroups in $\mathscr{F}$ that contain $H$, which is weakly contractible since $H$ itself is a minimum of this poset. Then Quillen's Theorem A completes the claim. Moreover, by virtue of Proposition \ref{decomposition of C} we obtain equivalences of symmetric monoidal $\infty$-categories $$\StMod(kG)\simeq \varprojlim_{G/H\in \mathcal{O}_{\mathscr{F}}(G)^\textrm{op}} \StMod(kH)\simeq \varprojlim_{G/H\in \mathcal{O}_{\mathscr{F}}(\Tilde{G})^\textrm{op}} \StMod(kH).$$

On the other hand,  Theorem \ref{graphs of groups} and Proposition \ref{decomposition of C} give us equivalences of symmetric monoidal $\infty$-categories $$\varprojlim_{\sigma\in \mathcal{T'}^\textrm{op}} \StMod(kG_\sigma)\simeq \StMod(k\Tilde{G})\simeq \varprojlim_{G/H\in \mathcal{O}_{\mathscr{F}}(\Tilde{G})^\textrm{op}} \StMod(kH). $$ But note that the functor $\StMod (-)\colon\mathcal{T}^\textrm{op}\to \widehat{\textrm{Cat}}^\otimes_\infty$ is the right Kan extension of the functor $\StMod (-)\colon\mathcal{T'}^\textrm{op}\to \widehat{\textrm{Cat}}^\otimes_\infty$ along the inclusion $\mathcal{T'}^\textrm{op} \to \mathcal{T}^\textrm{op}$ since the stable module $\infty$-category of the trivial group $\StMod(k\{1\})$ is trivial. Hence we obtain an equivalence of symmetric monoidal $\infty$-categories $$\varprojlim_{\sigma\in \mathcal{T'}^\textrm{op}} \StMod(kG_\sigma)\simeq \varprojlim_{\sigma\in \mathcal{T}^\textrm{op}} \StMod(kG_\sigma)$$
which gives us an equivalence with $\StMod(kG)$ as we wanted. 
\end{proof}

\begin{corollary}\label{exact sequence for triangles of groups}
    Let $G$ be an amalgam group with trivial face group and $\mathcal{T}\to \textrm{Gps}$ be its associated triangle of groups. Then we have an exact sequence of abelian groups $$0\to H^1(\mathcal{T}; \pi_1\circ f)\to T(G)\to H^0(\mathcal{T}; \pi_0\circ f)\to0$$ where $f$ is the composition of the Picard space functor and the stable module $\infty$-category functor  corresponding to the triangle of groups $\mathcal{T}\to \textrm{Gps}$.
\end{corollary}

\begin{proof}
This follows in the same fashion as in Corollary \ref{exact sequence for picard groups}.  Consider the spectral sequence of Equation (\ref{spectral sequence for Pic}) $$E_2^{p,q}=H^p(\mathcal{T};\pi_q\circ\mathrm{Pic}\circ \StMod)\Rightarrow \pi_{p-q}(\mathrm{Pic}(\StMod(kG))).$$ Since the face group is trivial,  $E^{p,q}$ is trivial except possibly for $p=0,1$. 
\end{proof}

\begin{example}
    Consider the Coxeter group $G=\Delta^*(2,4,4)$ of isometries of the Euclidean plane generated by the reflections across the sides of a triangle with angles $\pi/2,\pi/4$ and $\pi/4$.  In this case, $G$ is an amalgam group arising from a non-positively curved triangle of groups with trivial face group, with each edge group isomorphic to $\mathbb{Z}/2$ and with vertex groups isomorphic to the dihedral groups $D_{8}, D_{16}$ and $ D_{16}$, where $D_{n}$ denotes the dihedral group of order $n$.

    In particular, the diagram  $\pi_1\circ\mathrm{Pic}\circ\StMod$ is constant. It follows that $H^1(\mathcal{T};\pi_1\circ\mathrm{Pic}\circ\StMod)\cong H^1(|\mathcal{T}|;k^\times)\cong  k^\times$. Since $T(\mathbb{Z}/2)=0$, we obtain 
    $$T(G)\cong  T(D_8)\oplus T(D_{16})\oplus T(D_{16}) \oplus k^\times \cong \mathbb{Z}^6 \oplus k^\times.$$
\end{example}

\bibliographystyle{amsplain}
\bibliography{mybibfile}

\end{document}